 \newtheoremstyle{numberedstyle}
   {9pt}
   {9pt}
   {\normalfont}
   {}
   {\bfseries}
   {.}
   {\newline}
   {}
\newcommand{\gconj}{\mathfrak{g}}
\numberwithin{equation}{section}
\newcommand{\addR}{\underline{r}}
\theoremstyle{changebreak}
\newtheorem{thm}{Theorem}[section]%
\newtheorem{lem}[thm]{Lemma}%
\newtheorem{cor}[thm]{Corollary}%
\newtheorem{prop}[thm]{Proposition}%
\newtheorem{conj}[thm]{Conjecture}%
\newtheorem{obs}[thm]{Observation}%
\theoremstyle{numberedstyle}
\newtheorem{rmk}[thm]{Remark}%
\newtheorem{question}[thm]{Question}%
\newtheorem{defn}[thm]{Definition}%
\title{Escaping endpoints explode}
\author{Nada Alhabib}
\address{Dept.\ of Mathematical Sciences, University of Liverpool, Liverpool L69 7ZL, UK.}
\email{n.a.alhabib@liverpool.ac.uk}
\author{Lasse Rempe-Gillen}
\address{Dept.\ of Mathematical Sciences, University of Liverpool, Liverpool L69 7ZL, UK. ORCiD: 0000-0001-8032-8580.}
\email{l.rempe@liverpool.ac.uk}
\thanks{The first author thanks
  Princess Nora bint Abdulrahman University in Saudi Arabia
   for supporting her doctoral studies at the University of Liverpool. The second author is supported by a Philip Leverhulme Prize.}
\subjclass[2010]{Primary 37F10; Secondary 30D05, 54F15, 54G15}
\newcommand{\F}{\mathcal{F}}
\newcommand{\s}{\underline{s}}
\newcommand{\addm}{\underline{m}}
\newcommand{\raddr}{\underline{r}}
\newcommand{\K}{\mathbb{K}}
\newcommand{\extaddr}{\operatorname{addr}}
\newcommand{\ts}{t_{\s}}
\newcommand{\itin}{\operatorname{itin}}
\newcommand{\addu}{\underline{u}}
\newcommand{\addt}{\underline{t}}
\newcommand{\Sequ}{\mathbb{S}}
\newcommand{\Sequb}{\overline{\Sequ}}
\begin{document} 

\begin{abstract}
  In 1988, Mayer proved the remarkable fact that $\infty$ is an explosion point for
    the set $E(f_a)$ of endpoints of the Julia set of $f_a\colon\C\to\C; e^z+a$ with $a<-1$. That is, the set
    $E(f_a)$ is totally separated (in particular, it does not have any nontrivial connected subsets), 
    but $E(f_a)\cup\{\infty\}$ is connected. Answering a question of Schleicher,
    we extend this result to the set $\tilde{E}(f_a)$ of \emph{escaping endpoints} in the sense of Schleicher and Zimmer,
    for any parameter $a\in\C$ for which the singular value $a$ belongs to an attracting or parabolic basin, has a finite orbit, or
    escapes to infinity under iteration (as well as many other classes of parameters). 

    Furthermore, we extend one direction of the theorem to much greater generality, by proving that the set $\tilde{E}(f)\cup\{\infty\}$ is connected
     for any transcendental entire function $f$ of finite order with bounded singular set. 
    We also discuss corresponding results for \emph{all} endpoints in the case of exponential maps; in order to do so, we establish 
     a version of Thurston's \emph{no wandering triangles} theorem for exponential maps. 
\end{abstract}

\maketitle

\section{Introduction}
   
 A point $x_0$ is called an \emph{explosion point} of a metric space $X$ if 
   $X$ is connected but $X\setminus\{x_0\}$ is \emph{totally separated}. The latter means that,
    for any $a,b\in X\setminus \{x_0\}$, there are open sets 
    $U_a,U_b\subset X$ such that $a\in U_a$, $b\in U_b$,
    $U_a\cup U_b=X$ and $U_a\cap U_b = \{x_0\}$. If $X$ is connected but $X\setminus\{x_0\}$ is 
    totally disconnected, then $x_0$ is called a \emph{dispersion point} of $X$. 

    In this article, we will typically consider the case of $X=A\cup \{\infty\}$, where $A\subset \C$, and
    $\infty$ is an explosion/dispersion point of $X$. In a slight abuse of terminology, we shall then also say that 
     $\infty$ is an explosion/dispersion point for $A$.

Clearly every explosion point is also a
    dispersion point, but the converse is not true. Indeed,   
   in 1921, Knaster and Kuratowski first gave an example of a space having a dispersion point, now known as the \emph{Knaster-Kuratowski fan} 
   \cite[Exemple~$\alpha$, \S5]{knasterkuratowski}. This fan does not have an explosion point; note that
    in  particular this gives an example of a space that is totally disconnected but not totally separated.
  Wilder~\cite{wilderexplosion} constructed
    the first example of a space having an explosion point shortly afterwards, in 1923. Another famous example of a space having an explosion point, which 
    is of key importance for this paper, is the set of endpoints of the \emph{Lelek fan} (see Section~\ref{sec:topology}), constructed by Lelek 
     \cite[\S 9]{lelekfan} in 1961. 
    It is tempting to think of such spaces as 
   ``pathological''. However, Mayer \cite{mayerexplosion}
   showed that explosion points occur very naturally within the iteration 
   of transcendental entire functions. 

   To explain his result, recall that the \emph{Julia set} $J(f)$ of a transcendental entire function $f$ consists of the
   points at which the family $(f^n)_{n=1}^{\infty}$ of the iterates of $f$ is not equicontinuous with respect to the spherical metric (in other words,
   this is where the function behaves ``chaotically''). Its complement $F(f)\defeq \C\setminus J(f)$ is called the \emph{Fatou set}. 
   The iteration of transcendental entire functions has enjoyed significant interest recently.
    The family of quadratic polynomials $z\mapsto  z^2+c$, which gives rise  to the 
      famous \emph{Mandelbrot set}, has been studied intensively as a prototype of
      polynomial and rational dynamics. Similarly, complex \emph{exponential maps} 
     \begin{equation}\label{eqn:exponentialfamily}
       f_a\colon \C\to \C; \qquad z\mapsto e^z+a,
     \end{equation}
   form the simplest space of transcendental entire functions. (Compare \cite{dghnew2,expbifurcationlocus} for a discussion of connections between the  two  families.) Hence there is 
   considerable interest in understanding the fine structure of the Julia sets of the  functions $f_a$.

  In the case where $a\in (-\infty, -1)$, it is known \cite[Theorem on p.~50]{devaneykrych} that $J(f_a)$ consists of uncountably
   many curves, each connecting a finite \emph{endpoint} to infinity. 
   It is easy to see that this set $E(f_a)$ of endpoints is totally separated; Mayer proved that 
   $E(f_a)\cup\{\infty\}$ is connected, and hence has $\infty$ as an explosion point. In fact, by work of
    Aarts and Oversteegen \cite{aartsoversteegen}, $J(f_a)\cup\{\infty\}$ is homeomorphic to the Lelek fan for these parameters.

   For general $a\in\C$, the Julia set no longer has such a simple structure. Nonetheless, there is still a natural notion of
    ``rays'' or ``hairs'', generalising the curves mentioned above. While some of these may no longer land at finite endpoints
     \cite{devaneyjarqueindecomposable,nonlanding}, Schleicher and Zimmer \cite{expescaping} have shown that there is always a set 
     $\tilde{E}(f_a)$ of \emph{escaping endpoints}, which moreover has a certain universal combinatorial structure that is independent of the
     parameter $a$, making it a natural object for dynamical considerations. For the purpose of this introduction, we define the
     relevant concepts as follows; see Section~\ref{sec:exponentialgeneral}, and in particular Corollary~\ref{cor:endpointcharacterisation},  for further discussion. 

  \begin{defn}[Types of escaping points]\label{defn:endpoints}
   The set of \emph{escaping points} of $f_a$ is denoted by
     \[ I(f_a) \defeq \{z\in\C\colon f_a^n(z)\to\infty\}. \]
   We say that a point $z_0\in\C$ is \emph{on a hair} if there exists
    an arc $\gamma\colon[-1,1]\to I(f_a)$ such that $\gamma(0)=z_0$. 

   We say that a point $z_0\in\C$ is an \emph{endpoint} if $z_0$ is
    not on a hair and there is an arc $\gamma\colon[0,1]\to \C$ such that
    $\gamma(0)=z_0$ and $\gamma(t)\in I(f_a)$ for all $t>0$.
    The set of all endpoints is denoted by $E(f_a)$, while
    $\tilde{E}(f_a) \defeq E(f_a)\cap I(f_a)$ denotes the set of \emph{escaping endpoints}. 
  \end{defn}

 With this terminology, Schleicher (personal communication) asked whether Mayer's phenomenon can be extended to the set of escaping endpoints
   for \emph{all} exponential maps.

\begin{question}[Schleicher]\label{question:schleicher}
  Let $a\in\C$. Is $\infty$ an explosion point for $\tilde{E}(f_a)$? 
\end{question}

 In this article, we provide a positive answer to this question for a large class of parameters $a$. 
  \begin{thm}[Escaping endpoints explode]\label{thm:escapingendpointsexplode}
    Let $a\in\C$, and suppose that the singular value $a$ of the map $f_a$ satisfies one of the following conditions.
   \begin{enumerate}[(a)]
    \item $a$ belongs to the Fatou set;
    \item $a$ is on a hair;
    \item $a$ is an endpoint.   
   \end{enumerate}
  Then $\infty$ is an explosion point for $\tilde{E}(f_a)$. 
  \end{thm}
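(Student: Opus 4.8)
The plan is to prove the two directions of "explosion point" separately: total separatedness of $\tilde{E}(f_a)$, and connectedness of $\tilde{E}(f_a)\cup\{\infty\}$. The second direction is the one that, according to the abstract, holds in vast generality (finite order, bounded singular set), so I would isolate it as a standalone theorem proved by a direct topological/dynamical argument, and then only need to verify that exponential maps with singular value as in (a), (b) or (c) satisfy its hypotheses. The first direction — total separatedness — is where the three cases on $a$ genuinely enter, because one needs enough control of the ray structure of $J(f_a)$ (existence of hairs, a dynamic partition into "addresses", landing/accessibility properties of escaping points) to build, for any two escaping endpoints $z_1,z_2$, a separation of $\tilde{E}(f_a)$ into two relatively open pieces meeting only at $\infty$.

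For the connectedness half, the strategy is the classical one going back to Mayer and to the Lelek-fan picture: suppose $\tilde{E}(f_a)\cup\{\infty\}$ is disconnected, so it splits as $A\sqcup B$ with $\infty\in B$, $A$ nonempty, both relatively closed; since $A$ is a nonempty set of escaping endpoints that is bounded away from $\infty$ and disjoint from the hairs, I want to derive a contradiction by producing a nontrivial continuum inside $I(f_a)$ on which the iterates escape but which touches $A$. The tool is that in the finite-order, bounded-singular-set setting every escaping point either lies on a hair or is the endpoint of one (this is the structure theorem underlying Definition~\ref{defn:endpoints} and Corollary~\ref{cor:endpointcharacterisation}), so the "ray" attached to each point of $A$ reaches to $\infty$; running this ray, together with a compactness/normal-family argument in logarithmic coordinates (the $\Blog$ class), forces $A$ and $B$ to be glued along $\infty$. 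I expect this to be packaged as: pick a point of $A$, follow its hair out to infinity, and observe that the closure of that hair in $\tilde E(f_a)\cup\{\infty\}$ must meet $B$ only at $\infty$, contradicting that the hair's endpoint lies in $A$ while the hair accumulates only at $\infty$ — hence no such splitting exists.

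For total separatedness, the workhorse is a combinatorial model: escaping points of $f_a$ carry \emph{external addresses} $\s\in\Sequ$ (sequences over $\mathbb{Z}$), and for a suitable choice of "partition" the sets of escaping points with a fixed address, or with address in a clopen subset of the sequence space, are relatively open and closed in $I(f_a)$. Given $z_1\ne z_2$ in $\tilde{E}(f_a)$, if their addresses differ one separates them by pulling back a clopen partition of $\Sequ$; if the addresses agree, one uses that along a single hair the endpoint is the unique escaping point that is not on the hair, so a small "height" threshold in logarithmic coordinates separates the two. Here is exactly where hypotheses (a)–(c) are used: they guarantee (via the cited results of Schleicher–Zimmer and the exponential-map technology) that the hairs exist, depend well on the address, and that the singular orbit does not obstruct the dynamic partition — e.g. in case (a) the orbit of $a$ stays in a Fatou component disjoint from $J(f_a)$, in cases (b),(c) it stays on a single hair, so in all cases one can choose partition boundaries (dynamic rays or curves to $\infty$) avoiding the singular orbit, which is what makes the address map continuous and the pieces clopen.

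The main obstacle, I expect, is the total-separatedness direction in the delicate sub-case where $z_1,z_2$ share the same external address: one must show that distinct escaping endpoints on (or attached to) the "same" combinatorial hair can still be separated inside $\tilde E(f_a)$ by relatively open sets covering everything and meeting only at $\infty$, which requires a quantitative statement that the escaping endpoint is an isolated-in-a-suitable-sense extremity of its hair, uniform enough to survive intersection with the totally disconnected set $\tilde E(f_a)$. Controlling this uniformly over all addresses — and checking that the chosen separating sets are genuinely open in $\tilde E(f_a)$ and cover it — is the technical heart; the finite-order/bounded-singular-set hypotheses and the three cases on $a$ are precisely what supply the needed uniform expansion and hyperbolic-type estimates in logarithmic coordinates to push this through.
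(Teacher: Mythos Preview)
Your proposal has genuine gaps in both directions.

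\textbf{Connectedness.} Your contradiction argument does not work. If $\tilde E(f_a)\cup\{\infty\}=A\sqcup B$ and $z\in A$ is an escaping endpoint, the hair $\gamma$ attached to $z$ lies in $I(f_a)\setminus \tilde E(f_a)$, not in $\tilde E(f_a)$. So ``following $\gamma$ out to infinity'' takes you outside the space $\tilde E(f_a)\cup\{\infty\}$ entirely; it produces no connected subset of that space joining $z$ to $\infty$, and there is nothing to contradict. The paper's mechanism is completely different and is really the heart of the matter: one builds, inside the model $J(\F)$, forward-invariant sub-fans $X_{\s^0}(\F)$ (for suitably chosen fast addresses $\s^0$) that are themselves Lelek fans and have the property that \emph{every} endpoint of $X_{\s^0}(\F)$ is an escaping endpoint of $\F$. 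By the Lelek-fan explosion property, the endpoints of each such sub-fan together with $\infty$ form a connected set; the union of these over all fast $\s^0$ is exactly $\tilde E(\F)\cup\{\infty\}$, hence connected. One then transfers this to $f_a$ via the conjugacy $\gconj$ and pullbacks. You never produce a connected subset of $\tilde E(f_a)\cup\{\infty\}$ containing more than one point, so your argument cannot start.

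\textbf{Total separatedness.} You have misidentified the obstacle. Two distinct escaping endpoints automatically have distinct external addresses (each is the landing point of a unique dynamic ray at a fast address), so there is no ``same address'' sub-case to worry about. The real difficulty is the opposite one: the assignment $z\mapsto\extaddr(z)$ is \emph{not} continuous on $J(f_a)$, so ``pulling back a clopen subset of $\Sequ$'' does not give a clopen subset of $\tilde E(f_a)$. What one actually has is the coarser \emph{itinerary} with respect to the partition by $f_a^{-1}(\gamma)$, and many distinct addresses can share the same itinerary. The key lemma you are missing is combinatorial: if two distinct addresses share a common itinerary with respect to an exponentially bounded (or intermediate) $\s$, then both addresses are \emph{slow}. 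Since escaping endpoints correspond to fast addresses, any two of them have different itineraries, and are therefore separated by an iterated preimage of $\gamma$, which contains no endpoints. Hypotheses (a)--(c) are used only to produce the curve $\gamma$ and guarantee that its address $\s$ is intermediate or exponentially bounded; they play no role in any ``uniform expansion estimate'' of the kind you describe.
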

 \begin{remark}
  The hypotheses  of the theorem are quite natural and general, 
   and have appeared 
   previously in the study of exponential maps 
   \cite{nonlanding,escapingconnected_2}. They are 
   known to hold, in particular,
   whenever $f_a$ has an attracting or parabolic orbit and when
   the singular value $a$ escapes to infinity \cite{expescaping}, 
   is preperiodic \cite{expper} or nonrecurrent \cite{beninilyubich}.
   It is likely that there are some parameters $a\in\C$ for which the hypotheses fail; compare Section~\ref{sec:further}. However, the 
   hypotheses can be weakened to a condition that, conjecturally, holds whenever $f_a$ does not have an irrationally indifferent periodic point; see 
    Theorem~\ref{thm:ghostlimbs}. 
 \end{remark}

 By definition, the proof of Theorem~\ref{thm:escapingendpointsexplode} requires two steps: 
   On the one hand, we must show that $\tilde{E}(f_a)\cup\{\infty\}$ is connected, and on the other that
    $\tilde{E}(f_a)$ is totally separated. It turns out that the first can be carried out in complete generality.

 \begin{thm}[Escaping endpoints connect to infinity] \label{thm:escapingendpoints}
  The set    $\tilde{E}(f_a)\cup\{\infty\}$ is connected for all $a\in\C$.
 \end{thm}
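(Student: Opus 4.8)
The plan is to exhibit $\tilde{E}(f_a)\cup\{\infty\}$ as the intersection of a nested sequence of connected sets, so that connectedness follows from the standard fact that a decreasing intersection of compact connected subsets of a compact Hausdorff space is connected. To carry this out I would work on the sphere $\hat{\C}$ and use the structure of escaping points via \emph{external addresses}: every escaping point of $f_a$ lies on a unique ray (hair) with a given address $\s\in\mathbb{S}$, and the escaping endpoints are precisely the endpoints of those rays that are themselves escaping. The key closed sets to consider are, for each $n$, the union $X_n$ of $\infty$ together with all points whose first $n$ iterates have real part at least some large threshold, intersected with the relevant combinatorial pieces; more precisely one wants to approximate $\tilde{E}(f_a)$ from outside by "truncated hair systems" that remain connected because the hairs all converge to $\infty$.

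Concretely, the first step is to set up, for a large constant $K$, the set of points $z$ with $\Rea f_a^n(z)\ge K$ for $n=0,\dots,N$; these form a closed set $C_N$, and the crucial observation (this is where the finite-order / bounded-singular-set structure of $f_a$, or rather the explicit exponential geometry, enters) is that each connected component of $C_N$ either is a bounded piece of a single hair or contains a "tail" going to $\infty$, and that the union $C_N\cup\{\infty\}$ is connected: any two components can be joined through $\infty$ because each hair tail, being eventually in a right half-plane, accumulates only at $\infty$. The second step is to intersect over $N$ and then over $K$: as $N\to\infty$ the sets $C_N\cap I(f_a)$ shrink toward the union of all escaping hairs, and restricting to endpoints one passes to $\tilde E(f_a)$. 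Because each $C_N\cup\{\infty\}$ is compact and connected and they are nested, $\bigcap_N (C_N\cup\{\infty\})$ is compact and connected; the remaining work is to identify this intersection with (something containing) $\tilde E(f_a)\cup\{\infty\}$ and to close up — i.e.\ to check that $\overline{\tilde E(f_a)}\subset \tilde E(f_a)\cup\{\infty\}$ on the sphere, so that no spurious limit points are added, or alternatively to argue directly that the closure of $\tilde E(f_a)$ in $\hat\C$ is connected and contained in $\tilde E(f_a)\cup\{\infty\}$.

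The main obstacle, I expect, is precisely this last point: showing that the closure of $\tilde E(f_a)$ adds nothing except possibly $\infty$, equivalently that a limit of escaping endpoints which is a finite point is again an escaping endpoint. This is false for arbitrary points of the Julia set — non-escaping endpoints or points on hairs can be limits of escaping points — so one must use that we are taking limits of escaping \emph{endpoints} specifically, together with the uniform hyperbolic expansion / bounded-distortion estimates available near $\infty$ in the logarithmic change of variable (the $\Blog$ setting) to control the address of the limit. A clean way around the difficulty is to not insist on taking the closure at all: instead, build the approximating connected sets $Y_N\supset \tilde E(f_a)\cup\{\infty\}$ already as subsets of $\tilde E(f_a)\cup\{\infty\}$ is impossible (that set need not be closed), so one genuinely works with the closure $\overline{\tilde E(f_a)}\cup\{\infty\}$, proves it connected via the nested-intersection argument, and then invokes the topological fact that if $A\cup\{\infty\}$ has connected closure and every point of $\overline{A}\setminus(A\cup\{\infty\})$ is accessible from $\infty$ within the closure — or more simply, that $A\cup\{\infty\}$ is dense in its closure and the closure is connected while the "extra" points do not separate it — to conclude. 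I would aim to phrase the final step so that it only requires: (i) $\overline{\tilde E(f_a)}\cup\{\infty\}$ connected, and (ii) every point of $\overline{\tilde E(f_a)}\setminus \tilde E(f_a)$ either equals $\infty$ or lies on a hair whose endpoint is in $\tilde E(f_a)$ and is itself a limit along that hair, so that removing it leaves the set connected; establishing (ii) with the exponential ray combinatorics is the technical heart of the argument.
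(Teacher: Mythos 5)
Your proposal does not match the paper's argument, and more importantly it has genuine gaps that I don't see how to repair along the lines you sketch.

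First, the nested intersection does not converge to what you want. With $C_N = \{z : \Rea f_a^n(z)\ge K \text{ for } n=0,\dots,N\}$, the decreasing intersection $\bigcap_N C_N$ is the set of points \emph{all} of whose iterates stay in a right half-plane. This is (for $K$ large) exactly a Cantor bouquet $K\subset J(f_a)$ (cf.\ Theorem~\ref{thm:boettcher}(d)), which contains whole hairs and not just endpoints, and which, crucially, misses the vast majority of $\tilde{E}(f_a)$: most escaping endpoints have some iterates with small or negative real part before the orbit finally escapes. You cannot ``intersect over $N$ and restrict to endpoints to pass to $\tilde E(f_a)$''~-- what you would obtain is only the endpoints of one sub-bouquet, a set much smaller than $\tilde{E}(f_a)$. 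The paper handles this gap with an entirely separate step: it proves connectedness for the endpoints of such a sub-bouquet (using the Lelek fan theory), and then propagates that connectedness over all of $\tilde E(f_a)$ via a pullback lemma (Lemma~\ref{lem:pullbacks}) together with density of the backward orbit, using the fact that a set with a dense connected subset is connected. That iterated-preimage step is indispensable and is absent from your outline.

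Second, your proposed resolution of the closure issue rests on a false premise. Since $\tilde{E}(f_a)$ is completely invariant and hence dense in $J(f_a)$, its closure in $\hat\C$ is $J(f_a)\cup\{\infty\}$: the ``extra'' points are not a thin set of accessible points on hairs but the entire Julia set minus the escaping endpoints. So showing ``the closure adds only $\infty$'' or ``removing the extra points doesn't disconnect'' cannot work as stated; going from connectedness of $J(f_a)\cup\{\infty\}$ (which is easy) to connectedness of the dense subset $\tilde{E}(f_a)\cup\{\infty\}$ is precisely the whole difficulty of the theorem, and removing a dense complement is not a benign operation in general.

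Finally, even for a single bouquet, the statement that $C_N\cup\{\infty\}$ is connected because all hairs accumulate at $\infty$ does not by itself tell you anything about the set of \emph{endpoints} of the bouquet being connected to $\infty$; that is the nontrivial explosion-point property of the Lelek fan. The paper derives it by embedding suitable sub-fans $X_{\s^0}(\F)$ (resp.\ $J_{\ge Q}(\F)$) into the model $\F$, verifying that they are Lelek fans whose endpoints are all escaping, and appealing to Proposition~\ref{prop:lelekfan}. Your nested-compactum argument produces a connected continuum but does not isolate its endpoints, and so does not yield the statement you need even locally.
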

\begin{remark}
  Our proof shows that $\tilde{E}(f_a)$ can be replaced by the set $\tilde{E}(f_a)\cap A(f_a)$ of all endpoints
    belonging to the \emph{fast escaping set}; see Remark~\ref{rmk:fast}. (The set $A(f_a)$ consists of all points that escape to infinity
     ``as fast as possible'', and has played an important role in recent results of transcendental dynamics; compare~\cite{ripponstallardfast}.)
\end{remark}

 We remark that this result is new even for $a\in (-\infty,-1)$. Indeed, our strategy of proof is to establish the theorem first in this setting,
   using a topological model introduced in~\cite{topescaping}, and then infer the general case using a conjugacy result also
   proved in~\cite{topescaping}. 

 In the opposite direction, under the hypotheses of Theorem~\ref{thm:escapingendpointsexplode}, we are able to
  employ combinatorial techniques to conclude that $\tilde{E}(f_a)$ is indeed totally separated. Without such hypotheses, 
   the answer to Question~\ref{question:schleicher} turns out to depend on extremely difficult problems concerning the possible accumulation behaviour of
  rays in the dynamical plane. For example, it is widely believed that a path-connected component 
  of the escaping set of an exponential map cannot be dense in the plane
   \cite[Question 7.1.33]{thesis}. (The nature of the path-connected components of $I(f_a)$ is well-known: each such component~-- with the 
    exception, in certain well-understood cases, of countably many additional curves~--  is one of the 
    afore-mentioned dynamics rays. See
 \cite[Corollary~4.3]{frs}.) However, 
   the corresponding question (whether an external ray can ever accumulate on the whole Julia set) remains open, as far as we are aware, even for
  quadratic polynomials. The following result shows that Question~\ref{question:schleicher} 
  is at least as difficult. 

 \begin{thm}[Separating escaping endpoints is hard]\label{thm:denseray}
  Suppose that, for some parameter $a\in \C$, the set 
   $I(f_a)$ has a path-connected component that is dense in the Julia set. 
   Then $\tilde{E}(f_a)$ is connected. 
 \end{thm}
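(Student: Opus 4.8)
The plan is to combine the hypothesis with Theorem~\ref{thm:escapingendpoints} to force connectedness of $\tilde{E}(f_a)$. Let $C$ be a path-connected component of $I(f_a)$ that is dense in $J(f_a)$. By the structure theory of path-connected components of $I(f_a)$ (\cite[Corollary~4.3]{frs}, as cited in the excerpt), $C$ is~-- apart from possibly countably many exceptional curves~-- a single dynamic ray, possibly together with its landing endpoint; in any case $C$ is the union of an arc (or finitely many arcs) of escaping points. First I would observe that, since $C$ is dense in $J(f_a)$ and $\tilde{E}(f_a)\subset J(f_a)$, the closure $\overline{C}$ contains all of $\tilde{E}(f_a)$, and moreover $\overline{C}\cup\{\infty\}$ is a connected subset of $J(f_a)\cup\{\infty\}$ (the closure of a connected set is connected). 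So the point is to show that adjoining the ray $C$ itself to $\tilde{E}(f_a)\cup\{\infty\}$ does not destroy connectedness, and then that every point of $C$ can be ``absorbed'' into the connected set we already control.

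The key step is the following: I claim that $\tilde{E}(f_a)\cup C\cup\{\infty\}$ is connected, and that in fact $\tilde{E}(f_a)\cup\{\infty\}$ is then forced to be connected because every point of $C$ lies in the closure of $\tilde{E}(f_a)$. For the first part, note that $C$ is itself (path-)connected and accumulates at $\infty$ (dynamic rays escape), so $C\cup\{\infty\}$ is connected; and by Theorem~\ref{thm:escapingendpoints}, $\tilde{E}(f_a)\cup\{\infty\}$ is connected. These two connected sets share the point $\infty$, so their union $\tilde{E}(f_a)\cup C\cup\{\infty\}$ is connected. The real content is the second part: I want to show $C\subset\overline{\tilde{E}(f_a)}$, i.e.\ the ray $C$ is a limit of escaping endpoints. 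Here I would use the combinatorial description of exponential dynamics~-- rays are parametrised by external addresses, and near any point of a ray with a given address there are rays of nearby addresses; an arbitrarily small perturbation of the address of $C$ produces a ray that lands at (or limits onto) a point arbitrarily close to the chosen point of $C$, and by choosing addresses corresponding to endpoints (e.g.\ addresses that are ``slow'' or otherwise forced to land, giving escaping endpoints) one obtains escaping endpoints converging to the given point of $C$. Combining: $\overline{\tilde{E}(f_a)}\supseteq \tilde{E}(f_a)\cup C$, and the right-hand side is dense in $J(f_a)$ and connects to $\infty$; since a set whose closure is connected has connected closure, and since we can realise the whole of the connected set $\overline{C}\cup\{\infty\}\supseteq \tilde E(f_a)\cup\{\infty\}$ as lying in the closure of $\tilde E(f_a)$, it follows that $\tilde{E}(f_a)\cup\{\infty\}$, hence $\tilde E(f_a)$ itself, cannot be separated into two relatively open pieces~-- any such separation of $\tilde E(f_a)$ would extend to a separation of its closure, contradicting connectedness of $\overline{\tilde E(f_a)}\supseteq \tilde E(f_a)\cup C\cup\{\infty\}$.

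Let me make the last implication precise, since it is where one must be careful: suppose $\tilde E(f_a)=U_1\sqcup U_2$ with $U_1,U_2$ nonempty, disjoint, and relatively open in $\tilde E(f_a)$. Passing to closures in $J(f_a)\cup\{\infty\}$ one does \emph{not} in general get a separation of $\overline{\tilde E(f_a)}$~-- this is exactly the subtlety that makes the theorem nontrivial rather than formal~-- so instead I would argue as follows. The hypothesis gives that $C$, and hence $\overline{C}\cup\{\infty\}\supseteq \tilde E(f_a)\cup\{\infty\}$, is connected; but I will instead show directly that $\tilde E(f_a)\cup\{\infty\}$ is connected by showing it is \emph{dense} in the connected set $\overline C\cup\{\infty\}$ and, crucially, that $\tilde E(f_a)\cup\{\infty\}$ is \emph{closed} in $\overline C \cup \{\infty\}$ modulo the ray $C$~-- more precisely, that $\overline{\tilde E(f_a)} = \tilde E(f_a)\cup C$ and that each point of $C$ is a \emph{non-cut} point, so that removing $C$ from the connected set $\tilde E(f_a)\cup C \cup \{\infty\}$ leaves a connected set. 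The cleanest route: $\tilde E(f_a)\cup C\cup\{\infty\}$ is connected (shown above); $C\cup\{\infty\}$ is a connected subset accumulating only at $\infty$, which is a single point of $\tilde E(f_a)\cup\{\infty\}$; quotienting $C\cup\{\infty\}$ to the point $\infty$ is a continuous surjection onto $\tilde E(f_a)\cup\{\infty\}$, so the latter is connected, hence so is $\tilde E(f_a)$. The main obstacle, and the step I expect to require the most care, is the density/approximation claim $C\subset\overline{\tilde E(f_a)}$: one must show that every point of the dense ray is a limit of \emph{escaping endpoints} specifically, which requires invoking the combinatorial model for exponential rays and endpoints (as developed in the references cited in the excerpt and recalled in Section~\ref{sec:exponentialgeneral}) and checking that the approximating rays can be chosen so that their endpoints are themselves escaping~-- this is where the classification of escaping endpoints via external addresses, and the abundance of addresses yielding escaping endpoints in every neighbourhood of a given address, is essential.
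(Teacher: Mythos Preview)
Your argument has a fundamental gap at the very last step. You conclude that ``$\tilde{E}(f_a)\cup\{\infty\}$ is connected, hence so is $\tilde{E}(f_a)$''~-- but this implication is false, and its failure is precisely the explosion-point phenomenon the entire paper is about. For exponential maps satisfying the hypotheses of Theorem~\ref{thm:escapingendpointsexplode}, $\tilde{E}(f_a)\cup\{\infty\}$ is connected while $\tilde{E}(f_a)$ is totally separated. So everything up to that point, even if correct, only re-establishes Theorem~\ref{thm:escapingendpoints}, which was already available and holds for \emph{every} parameter, with or without a dense ray. In the same vein, your quotient map collapses $C\cup\{\infty\}$ to a single point and lands back in (a space that is at best bijective with) $\tilde{E}(f_a)\cup\{\infty\}$, not $\tilde{E}(f_a)$; and your ``key claim'' $C\subset\overline{\tilde{E}(f_a)}$ holds trivially for any subset $C\subset J(f_a)$, since $\tilde{E}(f_a)$ is forward- and backward-invariant and hence dense in $J(f_a)$. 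Nothing in the argument actually uses the density hypothesis to rule out a separation of $\tilde{E}(f_a)$ itself.

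The paper's proof takes a genuinely different route. One argues by contradiction: if two points $z_0,z_1\in\tilde{E}(f_a)$ were separated, then by Lemma~\ref{lem:planeseparation} there is a closed connected set $\Delta\subset\C$ separating them with $\Delta\cap\tilde{E}(f_a)=\emptyset$. Density of $C$ is used here: since $z_0,z_1\in\overline{C}$, a path in $C$ comes close to both, and hence must cross $\Delta$. One then iterates this path forward (via Theorem~\ref{thm:frs}) until it lies inside the Cantor bouquet $K$ of Theorem~\ref{thm:boettcher}, carrying a piece of $\Delta$ along. The contradiction comes from a geometric fact about Cantor bouquets (Proposition~\ref{prop:accessing}): a closed connected set that meets the bouquet in a bounded arc not containing an endpoint must separate the dense set of escaping endpoints of the bouquet, contradicting Theorem~\ref{thm:JQbouquet}. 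In short, the hypothesis is used to force any would-be separating continuum to touch an escaping endpoint; this is the missing idea in your attempt.
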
 

However, as we discuss in Section~\ref{sec:further}, it appears quite likely that the answer to Question~\ref{question:schleicher} is always positive.

 \begin{conj}[Escaping endpoints always explode] \label{conj:endpoints}
  Let $a\in \C$. Then $\tilde{E}(f_a)$ is totally separated
   (and hence 
     $\infty$ is an explosion point of $\tilde{E}(f_a)\cup\{\infty\}$.) 
 \end{conj}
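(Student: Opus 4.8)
The plan is to establish the conjecture by pushing the combinatorial argument behind Theorem~\ref{thm:escapingendpointsexplode} as far as it will go; note that the connectedness half of the statement is already proved unconditionally in Theorem~\ref{thm:escapingendpoints}, so the whole content is that $\tilde{E}(f_a)$ is totally separated. First I would invoke the strengthened statement Theorem~\ref{thm:ghostlimbs}: since its hypothesis is expected to fail only when $f_a$ has an irrationally indifferent periodic orbit, the conjecture is already known for all other parameters, and one may assume that $f_a$ has a Siegel disc or a Cremer point. For such $a$, the aim is to separate any two escaping endpoints $x\neq y$ using their external addresses $\underline{s}=s_0s_1\dots\in\mathbb{Z}^{\mathbb{N}}$, which record the fundamental strips $R_{s_n}=\{\,|\Ima z-2\pi s_n|<\pi\,\}$ visited by the orbit (see Corollary~\ref{cor:endpointcharacterisation}).

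If $\underline{s}(x)\neq\underline{s}(y)$, let $n$ be least with $s_n(x)\neq s_n(y)$, so that $f_a^{\,n}(x)$ and $f_a^{\,n}(y)$ lie in distinct strips. I would then construct a Jordan curve $\Gamma\ni\infty$ contained in $I(f_a)\cup\{\infty\}$, disjoint from $\tilde{E}(f_a)$ and from the finitely many points $a,f_a(a),\dots,f_a^{n-1}(a)$, separating $f_a^{\,n}(x)$ from $f_a^{\,n}(y)$; since $f_a$ has no critical points, $f_a^{-n}(\Gamma)$ is then a closed set still disjoint from $\tilde{E}(f_a)$ (preimages of points on hairs are again on hairs), and the intersections of its complementary components with $\tilde{E}(f_a)$ form a clopen partition separating $x$ from $y$. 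The building blocks for $\Gamma$ are dynamic rays, which lie on hairs and therefore miss $E(f_a)$ entirely; the point requiring care is that a single ray with $\infty$ adjoined is only an arc, so $\Gamma$ must be closed up — by joining rays landing on opposite sides of the two strips, or at a common non-escaping or (pre)periodic landing point. When $\underline{s}(x)=\underline{s}(y)$ but $x\neq y$, the address does not distinguish the two points, and one must instead separate along the escape-speed (``potential'') coordinate of the common ray, using that an escaping endpoint is the finite end of its ray or of an attached ghost limb; this is exactly the configuration treated by the analysis behind Theorem~\ref{thm:ghostlimbs}, and adapting it to Siegel and Cremer parameters is the delicate part.

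The main obstacle is the one already exposed by Theorem~\ref{thm:denseray}: if a dynamic ray $g_{\underline{s}}$ accumulates densely on $J(f_a)$ — or merely if the closure of some path-component of $I(f_a)$ meets a neighbourhood of every point of $J(f_a)$ — then no separating curve of the above kind can avoid $\tilde{E}(f_a)$, and in fact $\tilde{E}(f_a)$ is then connected. Excluding this is a statement about accumulation sets of dynamic rays of the same nature as the question, open even for quadratic polynomials, of whether an external ray can accumulate on the entire Julia set, so an unconditional proof is not realistically available with current technology. The realistic target is therefore a conditional theorem: assuming that $I(f_a)$ has no such pathologically accumulating path-component — a \emph{no wandering triangles} rigidity for $f_a$ in the spirit of the exponential version mentioned in the abstract — the address-separation mechanism above, together with that rigidity to handle the co-address case, should yield that $\tilde{E}(f_a)$ is totally separated. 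One would then verify the hypothesis for as large a parameter class as possible; for Siegel parameters the rotation domain should already obstruct dense rays, which would cover everything except Cremer maps, where the uncontrolled orbit of the singular value currently leaves no combinatorial grip. That last case is the part of Conjecture~\ref{conj:endpoints} I expect to remain genuinely open.
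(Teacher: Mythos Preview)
The statement is a \emph{conjecture} in the paper, and the paper does not prove it; Section~\ref{sec:further} presents it explicitly as open, gives the partial result Theorem~\ref{thm:ghostlimbs}, and then offers only heuristic evidence for the remaining cases. Your proposal is likewise not a proof but a strategy discussion, and you say yourself that ``an unconditional proof is not realistically available with current technology'' and that the Cremer case should ``remain genuinely open''. In that respect your assessment matches the paper's.

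That said, several steps in your sketch are inaccurate. First, invoking Theorem~\ref{thm:ghostlimbs} does not reduce the conjecture to the irrationally indifferent case: that theorem also assumes that every repelling periodic point is the landing point of a periodic dynamic ray, which is itself a conjecture (as the paper notes immediately after the proof). The residual set of parameters is therefore strictly larger than Siegel/Cremer. Second, the case ``$\underline{s}(x)=\underline{s}(y)$ but $x\neq y$'' does not occur for escaping endpoints: by Theorem~\ref{thm:rays}\,\ref{item:thm:rays:endpoints} each escaping endpoint is the landing point of a unique dynamic ray at a fast address, so distinct escaping endpoints carry distinct addresses. The real difficulty is that two fast addresses may fail to be separable by any closed set avoiding $\tilde{E}(f_a)$, which is precisely Theorem~\ref{thm:denseray}; your Jordan-curve construction does not bypass this, since ``closing up'' two rays at a common (pre)periodic landing point presupposes exactly the landing information that is missing in the uncovered cases. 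Third, the \emph{no wandering triangles} theorem in the paper is about itinerary classes and is used for the set of \emph{all} endpoints (Theorem~\ref{thm:endpoints}); for escaping endpoints the relevant fact is Proposition~\ref{prop:slowsharingitineraries}, which already does the job whenever a suitable curve from the singular value to $\infty$ exists --- finding such a curve, not ruling out wandering triangles, is the obstruction.
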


\subsection*{The set of all endpoints}
  The set $E(f_a)\supset\tilde{E}(f_a)$ of \emph{all} endpoints of an exponential map $f_a$ is a less universal object than that of escaping endpoints, since
    its structure may vary considerably with the parameter $a$. Nonetheless, one can ask the analogue of Question~\ref{question:schleicher} for this 
    set. The set $\tilde{E}(f_a)$ of escaping endpoints is completely invariant under $f_a$, and hence
    dense in the Julia set. Thus, by Theorem~\ref{thm:escapingendpoints}, the set $E(f_a)\cup\{\infty\}$ contains a dense connected
   subset for all $a\in\C$, and is therefore connected. In the opposite
    direction, we are able to prove the following.
  (See Section~\ref{sec:accessible} for the definition
    of the kneading sequence). 

 \begin{thm}[Endpoints disperse] \label{thm:endpoints}
    Let $a\in\C$, and suppose that the singular value $a$ of the function $f_a$ satisfies one of the following conditions.
   \begin{enumerate}[(a)]
    \item $a$ belongs to the Fatou set;\label{item:fatou}
    \item $a$ is an endpoint or on a hair, and its kneading sequence is non-periodic.
   \end{enumerate}
  Then $\infty$ is a dispersion point for $E(f_a)$; in the case of~\ref{item:fatou}, $\infty$ is also an explosion point. 
  \end{thm}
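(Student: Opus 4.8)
The plan is to split Theorem~\ref{thm:endpoints} into its two assertions: connectedness of $E(f_a)\cup\{\infty\}$, and total disconnectedness (resp.\ total separation in case~\ref{item:fatou}) of $E(f_a)$. As observed in the paragraph preceding the statement, the first assertion is essentially immediate: $\tilde E(f_a)$ is completely invariant and hence dense in $J(f_a)$, so by Theorem~\ref{thm:escapingendpoints} the set $E(f_a)\cup\{\infty\}$ contains the dense connected subset $\tilde E(f_a)\cup\{\infty\}$, and the closure of a connected set is connected. So the entire content is the separation statement, and I would organise the proof around the combinatorial description of endpoints via external addresses and kneading sequences developed in Section~\ref{sec:accessible}.

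The core idea is the same mechanism that makes $\tilde E(f_a)$ totally separated in Theorem~\ref{thm:escapingendpointsexplode}: given two distinct endpoints $z,w$, one finds a point $\zeta$ in the dynamical plane lying ``between'' them whose entire forward orbit can be enclosed, together with its hair, in a bounded domain disjoint from the (sub)rays through $z$ and $w$, yielding a clopen separation of $E(f_a)$. For escaping endpoints the separating object is produced from differing external addresses; for \emph{all} endpoints one must instead exploit the kneading sequence of the singular orbit. First I would recall that, under hypotheses~\ref{item:fatou} or~(b), every hair accessible from an endpoint has a well-defined itinerary/address, and that the non-periodicity of the kneading sequence (automatic in the Fatou case, where in fact the singular orbit stays in a compact subset of the Fatou set) prevents the pathological ``non-landing'' and ``ghost limb'' configurations; this is where I would invoke the no-wandering-triangles result for exponential maps advertised in the abstract. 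Then, for two endpoints with distinct addresses, I would use the standard separating-curve construction (a ``bounding'' ray pair together with a piece of an equipotential, pulled back) to build sets $U_z,U_w$ with $U_z\cup U_w = E(f_a)\cup\{\infty\}$ and $U_z\cap U_w=\{\infty\}$; for two endpoints with the \emph{same} address, I would argue that such an address can carry at most one endpoint (this is where periodicity of the kneading sequence is the only obstruction, hence excluded by hypothesis), so this case does not arise.

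For the sharper conclusion in case~\ref{item:fatou} — that $\infty$ is an explosion point, i.e.\ the separation is genuinely \emph{clopen} rather than merely yielding total disconnectedness — I would use that when $a$ lies in an attracting or parabolic basin the Fatou set is a single completely invariant domain with nice boundary behaviour, so that the curves used to separate points can be taken in $F(f_a)$ and thus automatically give open-and-closed pieces of $J(f_a)$ (hence of $E(f_a)$). This is exactly the point where Mayer's original argument for $a<-1$ generalises verbatim once one has the description of $J(f_a)$ as a ``pinched Cantor bouquet''; I would reduce to that picture using the conjugacy results from \cite{topescaping} already cited for Theorem~\ref{thm:escapingendpoints}.

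The main obstacle I anticipate is the ``same address'' case in part~(b): ruling out that a non-escaping endpoint shares its external address with another endpoint requires controlling the accumulation set of the corresponding hair, and it is precisely here that the no-wandering-triangles theorem for exponential maps must be deployed in full strength — a wandering triangle would correspond exactly to a non-degenerate fibre of endpoints, and the non-periodicity of the kneading sequence is the hypothesis that lets one run Thurston's pullback argument. Establishing (or carefully citing) that theorem, and checking that the combinatorial model genuinely excludes wandering triangles under hypothesis~(b), is the technical heart; by contrast the separation construction itself, once the addresses are known to be distinct, is routine and parallels the escaping case.
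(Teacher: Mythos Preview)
Your outline has the right high-level shape (connectedness is free from Theorem~\ref{thm:escapingendpoints}; the work is in the separation statement; case~(b) needs the no-wandering-triangles theorem), but the mechanism you describe for case~(b) is miscalibrated in a way that matters.

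You repeatedly conflate \emph{external address} (which labels a ray) with \emph{itinerary} (which records the sequence of strips of $\C\setminus f_a^{-1}(\gamma)$ visited by the orbit). Two distinct endpoints \emph{never} share an external address~-- a ray has at most one landing point~-- so your ``same address'' case is vacuous for trivial reasons, not because of any kneading-sequence hypothesis. The genuine obstruction is that two distinct endpoints \emph{can} share the same itinerary: several rays (with different addresses) may land together, and a priori rays with the same itinerary could land at different points. Observation~\ref{obs:itineraryseparation} separates points only when their itineraries differ, so your pairwise-separation scheme breaks down precisely here, and this is not rescued by NWT in the way you suggest.

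What the paper actually does in case~(b) is different and weaker (matching the weaker conclusion ``dispersion point'' rather than ``explosion point''): one takes a connected component $X\subset E(f_a)$, observes that all of $X$ shares a single itinerary $\addm$, uses the non-periodicity of the kneading sequence to ensure $\sigma^j(\addm)\neq\K(\s)$, and then invokes Proposition~\ref{prop:finitenumber} (whose aperiodic case \emph{is} the NWT) to conclude that only finitely many addresses have itinerary $\addm$. Hence $X$ is finite and therefore a singleton. This yields total disconnectedness but \emph{not} total separation, which is exactly why the theorem only claims a dispersion point in case~(b). Your plan, if it worked, would give total separation in case~(b), which the paper does not assert and your argument does not establish.

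Two smaller points: exponential maps have no basin of infinity and hence no ``equipotentials''~-- the separating object is always $f_a^{-n}(\gamma)$ for the curve $\gamma$ from $a$ to $\infty$; and in case~\ref{item:fatou} the Fatou set need not be a single completely invariant domain (the attracting/parabolic cycle can have period $n>1$), so the curve $\gamma$ is built inside the component containing $a$ as in Proposition~\ref{prop:fatou}, which then yields that \emph{no two endpoints share an itinerary}, giving total separation directly via Observation~\ref{obs:itineraryseparation}.
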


 The proof of Theorem~\ref{thm:endpoints} requires a more detailed combinatorial analysis than that of Theorem~\ref{thm:escapingendpointsexplode}.
   In particular, we establish a combinatorial version of Thurston's \emph{no wandering triangles} theorem for exponential maps
   (Theorem~\ref{thm:NWT}). The following consequence of the latter result may be of independent interest. 

 \begin{thm}[No wandering triods]\label{thm:nowanderingtriods}
   Let $f_a$ be an exponential map. Suppose that $z_0$ is an endpoint of $f_a$ that is accessible 
  from $I(f_a)$ in at least three different ways; that is, 
  there are three arcs in $I(f_a)\cup\{z_0\}$ that contain $z_0$ and are otherwise pairwise 
  disjoint. 

    Then $z_0$ is eventually periodic. 
 \end{thm}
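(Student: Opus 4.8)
The plan is to deduce the statement from the combinatorial ``no wandering triangles'' theorem for exponential maps (Theorem~\ref{thm:NWT}), by encoding the three accesses to the endpoint $z_0$ as a triod (or ``triangle'') in a suitable combinatorial model of the dynamics. First I would recall the combinatorial framework: escaping points on hairs carry external addresses, which are sequences of integers recording the itinerary with respect to the standard partition of the plane into horizontal strips, and an accessible endpoint $z_0$ inherits from each arc of $I(f_a)$ landing at it a well-defined external address (or, more robustly, a kneading-type combinatorial datum). The key point is that if three arcs $\gamma_1,\gamma_2,\gamma_3\subset I(f_a)\cup\{z_0\}$ meet only at $z_0$, then each $\gamma_i$, being a path-connected subset of $I(f_a)$, lies on (an eventual piece of) a dynamic ray, and hence determines an address; these three addresses $\underline{s}_1,\underline{s}_2,\underline{s}_3$ are pairwise distinct, since distinct path components of $I(f_a)$ have distinct addresses (as in \cite{frs}), yet they all correspond to rays that land together at the single point $z_0$.

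Next I would translate this into the cyclic/combinatorial model underlying Theorem~\ref{thm:NWT}: the set of external addresses carries a natural circular order (coming from the vertical order of the strips, exactly as angles on the circle in the polynomial case), and a collection of addresses that land at a common point forms a ``gap'' or ``(ideal) polygon'' for the induced combinatorial map $\sigma$ (the shift, which models $f_a$). Three addresses landing together thus give a \emph{triangle} in the sense of the exponential no-wandering-triangles theorem. The forward orbit of this triangle under $\sigma$ is obtained by applying $f_a$ to $z_0$: since $f_a$ is injective enough on hairs and the accesses are genuinely distinct, the images $f_a^n(z_0)$ remain endpoints accessed in three ways, so the triangle genuinely has an infinite forward orbit unless it is (pre)periodic. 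Theorem~\ref{thm:NWT} asserts precisely that a wandering triangle cannot exist, so the triangle associated to $z_0$ must be preperiodic; unwinding this gives that $z_0$ itself is eventually periodic.

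The main obstacle, and where the argument requires genuine care rather than routine bookkeeping, is the passage from the \emph{topological} hypothesis (three disjoint arcs in $I(f_a)$ at $z_0$) to a \emph{bona fide combinatorial triangle} to which Theorem~\ref{thm:NWT} applies. Two subtleties must be handled: first, one must show that each access actually reaches $z_0$ along a ray with a well-defined address and that these addresses are distinct~-- this uses the classification of path components of $I(f_a)$ and the fact that an endpoint is by definition \emph{not} on a hair, so the three arcs cannot be sub-arcs of a single ray; second, one must verify that the three addresses are ``unlinked'' appropriately and that applying the shift to them matches applying $f_a$ to $z_0$ (in particular that $z_0$ is never mapped onto the singular value in a way that destroys an access). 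I expect the cleanest route is to phrase everything in terms of the combinatorial model of \cite{topescaping} already invoked for Theorem~\ref{thm:escapingendpoints}, where $f_a$ is conjugated to an explicit model map on which addresses, their circular order, and the notion of landing are all literally defined, so that ``triod at $z_0$'' becomes ``three addresses in one fiber'' by construction; then Theorem~\ref{thm:nowanderingtriods} is an immediate corollary of Theorem~\ref{thm:NWT} together with the conjugacy.
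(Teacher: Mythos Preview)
Your approach is essentially the paper's: reduce to the combinatorial no-wandering-triangles theorem (Theorem~\ref{thm:NWT}) by associating to the three accesses three distinct external addresses landing together, and then argue that if $z_0$ were not eventually periodic this would give a wandering triangle. The paper carries this out more directly than you propose, invoking Theorem~\ref{thm:rays} (together with local injectivity of $f_a^n$ near $z_0$) to pass to an iterate $f_a^n(z_0)$ that is the landing point of three genuine dynamic rays; the full conjugacy to the model of \cite{topescaping} is not needed here.

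There is one point your plan does not address and which you must not overlook: Theorem~\ref{thm:NWT} only rules out wandering triangles \emph{of exponential combinatorial type}, meaning that all iterates \emph{and all their integer translates} are pairwise disjoint and unlinked (Definition~\ref{defn:gaps}). Your discussion of ``unlinked appropriately'' and of the shift matching $f_a$ covers the iterates, but not the translates; without this hypothesis wandering triangles for the shift on infinitely many symbols can exist, so the verification is not a formality. The paper disposes of it in one line: if the rays at $\addR^1,\addR^2,\addR^3$ land together at $w$, then by the $2\pi i$-periodicity of $f_a$ the rays at $\addR^j+m$ land together at $w+2\pi i m$, and since $z_0$ is assumed not eventually periodic all these landing points are distinct, forcing the required disjointness and unlinking.
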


\subsection*{More general functions}
  In this article, we focus primarily on the family of exponential maps. However, there is a large class of
   transcendental entire functions for which the existence of rays, in analogy to the exponential family, was established in~\cite{rrrs} (and, independently,
  in \cite{baranskihyperbolic} for a more restrictive class of maps).
   It makes sense to ask about the properties of the set of escaping endpoints in this context also. In this direction, we shall prove the following
   extension of Theorem~\ref{thm:escapingendpoints}. 
  (Recall that an entire function has \emph{finite order} if $\log\log|f(z)| = O(\log|z|)$ as $z\to\infty$, and that the
    set $S(f)$ of \emph{singular values} is the closure of the set of critical and asymptotic values of $f$. For further background, we refer e.g.\ 
    to~\cite{brush}.) 

\begin{thm}[Escaping endpoints of functions with rays]\label{thm:general}
  Let $f$ be a finite-order entire function with bounded singular value set $S(f)$, 
    a finite composition of such functions, or more generally a function satisfying a ``uniform head-start condition'' in the sense of \cite{rrrs}. 
    Then the set $\tilde{E}(f)\cup\{\infty\}$ is connected. 

   If, additionally, $f$ is hyperbolic~-- that is, every singular value $s\in S(f)$ belongs to the basin of an attracting periodic orbit~--
     then $\infty$ is an explosion point for $\tilde{E}(f)$. 
\end{thm}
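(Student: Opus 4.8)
We shall deduce both assertions from the ray structure of \cite{rrrs} together with the topological model and rigidity results of \cite{topescaping}; the argument simultaneously reproves Theorem~\ref{thm:escapingendpoints}. It is convenient to establish the (a priori stronger) statement that $\bigl(\tilde{E}(f)\cap A(f)\bigr)\cup\{\infty\}$ is connected, where $A(f)$ denotes the fast escaping set; since $\tilde{E}(f)\cap A(f)$ is dense in $\tilde{E}(f)$ and the closure of a connected set is connected, this implies the first conclusion. By \cite{rrrs} every point of $A(f)$ lies on a \emph{ray tail}, an injective arc $\gamma\colon[0,\infty)\to A(f)$ with $\gamma(t)\to\infty$, and the points of $\tilde{E}(f)\cap A(f)$ are precisely the finite ends $\gamma(0)$ of those ray tails that cannot be prolonged inside $I(f)$. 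In particular every component of $A(f)$ is unbounded \cite{ripponstallardfast}, so $A(f)\cup\{\infty\}$ is connected; the difficulty is that $\tilde{E}(f)\cap A(f)$ is a vastly thinner set, and to control it we pass to the model.

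For every function in our class, \cite{topescaping} provides a universal topological model $(\mathbb{B},\sigma)$ for the dynamics on $A(f)$, encoding the escape rate, together with a conjugacy from $(A(f),f)$ onto $(\mathbb{B},\sigma)$ that extends to identify $\infty$ with the point at infinity of $\mathbb{B}$. For $f=f_a$ with $a\in(-\infty,-1)$ this is exactly the model used to prove Theorem~\ref{thm:escapingendpoints}. The first assertion thereby reduces to a purely topological statement in the model: writing $\tilde{\mathcal{E}}$ for the endpoint set of $\mathbb{B}$, the space $\tilde{\mathcal{E}}\cup\{\infty\}$ is connected.

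To prove this we argue by contradiction, in the spirit of Mayer \cite{mayerexplosion}. Suppose $\tilde{\mathcal{E}}\cup\{\infty\}=P\sqcup Q$ with $P,Q$ nonempty and relatively clopen and $\infty\in P$. Being closed and avoiding $\infty$, the set $Q$ is bounded, so its closure $\overline{Q}$ in $\mathbb{B}$ is compact. Since the endpoints are dense in $\mathbb{B}$ we have $\overline{P}\cup\overline{Q}=\mathbb{B}\cup\{\infty\}$, which is connected, being a model of $A(f)\cup\{\infty\}$; hence $\overline{P}\cap\overline{Q}\neq\emptyset$, and as $P\cap Q=\emptyset$ any point $z_0$ in this intersection lies in the interior of some hair of $\mathbb{B}$. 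The contradiction is then extracted from the explicit combinatorics of $\mathbb{B}$: using that endpoints are dense along the hair through $z_0$, that every external address is a limit of its ``fast escaping perturbations'', and that $P$ and $Q$ are clopen and compatible with $\sigma$, one argues that $z_0$ cannot belong to exactly one of $\overline{P}$ and $\overline{Q}$ --- heuristically, the endpoints near $z_0$ whose addresses are pushed far into the fast regime all fall into the piece containing $\infty$, forcing $z_0\in\overline{P}$, whereas the portion of the hair below $z_0$ forces $z_0\in\overline{Q}$. Making this step rigorous --- the delicate topology of the totally disconnected model endpoint set --- is the heart of the argument and its principal obstacle; everything else is carried through the conjugacy of \cite{topescaping}.

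Finally, suppose $f$ is hyperbolic. Then $f$ is uniformly expanding in a neighbourhood of $J(f)$, the postsingular set is a compact subset of the Fatou set, and each dynamic ray is uniquely determined by its external address \cite{rrrs} --- after first replacing $f$, if necessary, by a disjoint-type function quasiconformally equivalent to it near the Julia sets, which does not change the topology of $\tilde{E}(f)$. Consequently the itinerary map $\iota\colon I(f)\to\Sigma$, recording for $z\in I(f)$ the sequence of logarithmic tracts visited by the orbit of $z$ (with respect to a dynamically natural partition of a neighbourhood of $J(f)$), is continuous, and it is injective on $\tilde{E}(f)$: two escaping endpoints with the same itinerary lie on a common dynamic ray and therefore coincide. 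Since $\Sigma$ is a countable product of discrete spaces, hence totally separated, a continuous injection of $\tilde{E}(f)$ into $\Sigma$ makes $\tilde{E}(f)$ itself totally separated; together with the connectedness established above, this shows that $\infty$ is an explosion point for $\tilde{E}(f)$.
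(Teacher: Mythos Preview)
Your proposal has two genuine gaps.

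\textbf{First, the appeal to \cite{topescaping} is misplaced.} That paper constructs the model $(\F,J(\F))$ specifically for exponential maps; it does \emph{not} provide a ``universal topological model $(\mathbb{B},\sigma)$'' for the fast escaping set of an arbitrary function in the class considered here. Indeed, the paper's Section~\ref{sec:general} opens precisely by observing that ``we do not have such explicit information about the position and behaviour of rays as we did through our model $\F$'', and hence the argument must be reworked. What is available for general $f$ is the existence of a Cantor bouquet $X_0\subset J(f)$ containing all points whose orbits stay far from the origin (\cite[Theorem~1.6]{brush}), but there is no off-the-shelf conjugacy taking $A(f)$ to a combinatorial brush on which one could run your contradiction argument.

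\textbf{Second, even for exponential maps, your Mayer-style contradiction is not a proof.} You yourself say the key step is ``the principal obstacle'' and leave it at a heuristic. The paper does not argue by contradiction at all. Its mechanism~-- both in Section~\ref{sec:model} for $\F$ and in Section~\ref{sec:general} for general $f$~-- is \emph{constructive}: one builds a forward-invariant closed set $X$ (by restricting to orbits that visit a carefully chosen nested sequence of tracts $T_m$ with $\inf_{z\in T_m}|z|\to\infty$) such that $X\cup\{\infty\}$ is a Lelek fan \emph{all of whose endpoints are escaping endpoints of $f$}. Density of endpoints in $X$ is established by an explicit approximation (perturb the $n$-th address entry so as to force the orbit close to $f^n(z)$; uniform expansion in the cylindrical metric gives convergence). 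Proposition~\ref{prop:lelekfan} then makes $E(X)\cup\{\infty\}$ connected, and Lemma~\ref{lem:pullbacks} propagates this through preimages to obtain a dense connected subset of $\tilde{E}(f)\cup\{\infty\}$. Your sketch contains none of this, and the separation argument you outline (points near $z_0$ with ``fast'' addresses fall into $P$, the piece of hair below $z_0$ forces $z_0\in\overline{Q}$) does not obviously lead anywhere: non-endpoints on the hair through $z_0$ are not in $\tilde{\mathcal{E}}$ at all, so there is no reason $z_0$ should lie in $\overline{Q}$.

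For the hyperbolic case your idea is closer to the mark: the paper likewise reduces to a disjoint-type situation, replacing $f$ by $f_{\lambda}=\lambda f$ with $|\lambda|$ small. But the assertion that this ``does not change the topology of $\tilde{E}(f)$'' is exactly the nontrivial input, namely the conjugacy \cite[Theorem~1.4]{boettcher} between $I(f)$ and $I(f_{\lambda})$; once that is invoked, total separation follows because $J(f_{\lambda})$ is a Cantor bouquet \cite[Theorem~1.5]{brush}. Your itinerary argument is a reasonable alternative for the disjoint-type map, but you should be explicit that the passage from $f$ to the disjoint-type model requires this conjugacy result rather than a soft quasiconformal statement.
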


 \subsection*{Idea of the proofs}
   Establishing Theorem~\ref{thm:escapingendpointsexplode} for  
    the case of $a\in (-\infty,-1)$ is the key step in our arguments. We shall
   do so using an explicit topological model for the dynamics of such $f_a$ on its Julia set, introduced in \cite{topescaping}.
   We review its definition and properties in Section~\ref{sec:model}.
   and then prove Theorem \ref{thm:escapingendpoints} in this case by
   showing that $J(f_a)$ contains an invariant
   subset $A$ such that $A\cup \{\infty\}$ is homeomorphic to the
     Lelek fan, and such that every endpoint of $A$ is an escaping endpoint
   of $f_a$. 

  We are able to transfer this result to the dynamical plane of any
   exponential map using a general conjugacy result from \cite{topescaping} (Theorem~\ref{thm:boettcher}).
   To show that $\tilde{E}(f_a)$ is totally separated in the cases covered by
   Theorem \ref{thm:escapingendpointsexplode}, we review and apply combinatorial
   methods for exponential maps in Section \ref{sec:accessible}. 
    Theorem~\ref{thm:endpoints} is also proved there, using similar arguments but requiring the \emph{no wandering triangles theorem}. 
   Finally, we consider the case of more general entire functions in
   Section~\ref{sec:general} and discuss further questions, including
    Conjecture \ref{conj:endpoints}, in Section
   \ref{sec:further}.

  \subsection*{Basic notation}
    As usual, $\C$ denotes the complex plane; its one-point compactification is the
    \emph{Riemann sphere} $\Ch\defeq \C\cup\{\infty\}$. 
    The set of integer sequences $\s=s_0 s_1 s_2 \dots$ is denoted by $\Z^{\N_0}$;
      if $s_0 \dots s_{n-1}$ is a finite sequence of integer, then its periodic extension is denoted
       $(s_0 \dots s_{n-1})^{\infty}\in \Z^{\N_0}$.

   Throughout the article (with the exception of Section~\ref{sec:general}), we
     consider the family $(f_a)_{a\in\C}$ of exponential maps defined by \eqref{eqn:exponentialfamily}. 
    This family is also often parameterised as $(w\mapsto \lambda e^w)_{\lambda\in\C\setminus\{0\}}$; the two families are conjugate via 
    the translation $z=w+a$, with $\lambda = e^a$. Our choice of parameterisation gives a slightly more convenient asymptotic description
    of \emph{dynamic rays} (see the remarks on notation in \cite[p.~108]{bifurcations_new}, and also the discussion in Section~2.3 of \cite{thesis}), 
     but as we shall not require this additional information,
    the choice is primarily a matter of taste.

 \subsection*{Acknowledgements} 
  We thank Anna Benini, Bob Devaney, Alexandre DeZotti, Vasiliki Evdoridou, Dierk Schleicher and Stephen Worsley for interesting discussions.
     We are also very grateful to the referee for many thoughtful comments that have helped us to improve
     the presentation of the paper.

\section{Lelek fans, straight brushes and Cantor bouquets} \label{sec:topology}
  
 We begin by formally introducing the topological notions central to this paper
    (see e.g.\ \cite[Section~4]{counterexamplestopology}).   
 \begin{defn}[Separation]
    Let $X$ be a topological space, and let $x_0\in X$. 
     \begin{enumerate} 
       \item Two points $a,b\in X$ are \emph{separated} (in $X$) if there is an open and closed subset $U\subset X$ with $a\in U$ and $b\notin U$. 
       \item The space $X$ is called \emph{totally separated} if any two points are separated in $X$.
       \item The space $X$ is called \emph{totally disconnected} if it contains no nontrivial connected subset. 
       \item If $X$ is connected and $X\setminus\{x_0\}$ is totally separated, then $x_0$ is called an \emph{explosion point} of $X$.
       \item If $X$ is connected and $X\setminus\{x_0\}$ is totally disconnected, then $x_0$ is called a \emph{dispersion point} of $X$. 
     \end{enumerate}
 \end{defn}

 The following~-- a version of the well-known fact that the continuous image of a connected space is connected~-- is immediate from the definitions.

\begin{obs}[Preimages of separated points are separated]\label{obs:preimageseparation}
  Let $f\colon X\to Y$ be a continuous map between topological spaces, and let $a,b\in X$. If $f(a)$ and $f(b)$ are separated in $Y$, then
    $a$ and $b$ are separated in $X$.
\end{obs}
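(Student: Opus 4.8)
The plan is to derive this statement directly from the definition of separation, together with the elementary fact that preimages of open-and-closed sets under continuous maps are open-and-closed.

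First I would unpack the hypothesis: since $f(a)$ and $f(b)$ are separated in $Y$, there is a set $V\subset Y$ that is simultaneously open and closed in $Y$, with $f(a)\in V$ and $f(b)\notin V$. Next I would set $U\defeq f^{-1}(V)\subset X$. Because $f$ is continuous and $V$ is open, $U$ is open in $X$; because $f$ is continuous and $V$ is closed (equivalently, $Y\setminus V$ is open), $U = X\setminus f^{-1}(Y\setminus V)$ is also closed in $X$. Thus $U$ is an open-and-closed subset of $X$.

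Finally I would check that $U$ witnesses the separation of $a$ and $b$: since $f(a)\in V$ we have $a\in f^{-1}(V)=U$, and since $f(b)\notin V$ we have $b\notin f^{-1}(V)=U$. Hence $a$ and $b$ are separated in $X$, as required.

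There is essentially no obstacle here; the only thing to be careful about is that ``separated'' is defined via a single open-and-closed set containing one point but not the other (rather than via a partition into two open sets), so that a single application of the preimage operation suffices. The name of the observation reflects the underlying principle: if the continuous image of a space were connected, a fortiori no such separating set could exist downstairs without one existing upstairs — this is merely the contrapositive packaging of ``continuous images of connected sets are connected,'' localised to pairs of points.
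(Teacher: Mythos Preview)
Your proof is correct and takes exactly the same approach as the paper: take an open-and-closed set $V\subset Y$ witnessing the separation of $f(a)$ and $f(b)$, and observe that $f^{-1}(V)$ is open-and-closed in $X$ and separates $a$ from $b$. The paper's version is simply a terser statement of the same argument.
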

\begin{proof}
  If $U$ is open and closed in $Y$ with $f(a)\in U$ and $f(b)\notin U$, then $f^{-1}(U)$ is open and closed in $X$ with
    $a\in f^{-1}(U)$ and $b\notin f^{-1}(U)$. 
\end{proof}

   The sets we are usually interested in are subsets of the plane.  
    Here the notion of separation takes the following, particularly simple, form.
    (See e.g.\ \cite[Lemma~3.1]{escapingconnected_2}.)

 \begin{lem}[Separation in the plane]\label{lem:planeseparation}
    Let $X\subset\C$, and let $x,y\in X$. Then $x$ and $y$ are separated in $X$ if and only if there is a closed and connected set $\Delta\subset\C\setminus X$ 
     such that $x$ and $y$ belong to different connected components of $\C\setminus \Delta$. 
  \end{lem}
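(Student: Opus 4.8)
The plan is to deduce both implications essentially from the definitions, the only non-elementary ingredient being a classical separation property of the plane. For the ``if'' direction, suppose $\Delta\subseteq\C\setminus X$ is closed and connected, with $x$ and $y$ in different connected components of $\C\setminus\Delta$. Since $\C$ is locally connected, the components of the open set $\C\setminus\Delta$ are open; let $Q$ be the one containing $x$. Then $Q$ is both open and closed in $\C\setminus\Delta$, and as $X\subseteq\C\setminus\Delta$ (because $\Delta\cap X=\emptyset$), the set $Q\cap X$ is open and closed in $X$, contains $x$, and does not contain $y$. Hence $x$ and $y$ are separated in $X$.

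For the ``only if'' direction, assume $x$ and $y$ are separated in $X$, and write $X=A\cup B$ with $A,B$ disjoint, open (hence also closed) in $X$, and $x\in A$, $y\in B$. Because $A$ and $B$ are closed in $X$ we have $\overline{A}\cap B=\overline{B}\cap A=\emptyset$, where closures are taken in $\C$; that is, $A$ and $B$ are \emph{separated} subsets of the metric space $\C$. Consequently $U\defeq\{z\in\C:\operatorname{dist}(z,A)<\operatorname{dist}(z,B)\}$ and $V\defeq\{z\in\C:\operatorname{dist}(z,B)<\operatorname{dist}(z,A)\}$ are disjoint open sets with $A\subseteq U$ and $B\subseteq V$; in particular $X\subseteq U\cup V$, so $F\defeq\C\setminus(U\cup V)$ is a closed subset of $\C\setminus X$. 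Since $U$ and $V$ are disjoint and open, each is a union of components of $U\cup V$, so $x$ and $y$ lie in different components of $\C\setminus F=U\cup V$.

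It remains to replace the closed barrier $F$ by a connected one, and this is the only point requiring genuine care: in general $F$ is disconnected, so we must extract a single connected component that still separates $x$ from $y$. Here we invoke the classical fact that the plane has the \emph{Phragm\'en--Brouwer property} (equivalently, reflecting the unicoherence of $S^2$): if a closed subset of $\C$ separates two points, then already one of its connected components does so; we will quote this from the topological literature rather than reprove it. Applying it to $F$ and the points $x,y$ produces a component $\Delta$ of $F$, which is closed and connected, satisfies $\Delta\subseteq F\subseteq\C\setminus X$, and has $x$ and $y$ in different components of $\C\setminus\Delta$. This is exactly the assertion, and the extraction of $\Delta$ is the main (and essentially sole) obstacle, since it is precisely where the planar topology enters.
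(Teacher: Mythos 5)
Your proof is correct. The ``if'' direction is elementary and right: components of $\C\setminus\Delta$ are open by local connectedness, hence clopen in $\C\setminus\Delta$, and intersecting with $X\subset\C\setminus\Delta$ gives the desired clopen separation.

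The ``only if'' direction is also sound. Writing $X=A\sqcup B$ with $A,B$ clopen in $X$, the closures in $\C$ satisfy $\overline{A}\cap B=\overline{B}\cap A=\emptyset$, so $A$ and $B$ are separated sets in $\C$; the distance-function construction then produces disjoint open $U\supset A$, $V\supset B$, and $F=\C\setminus(U\cup V)$ is a closed set disjoint from $X$ placing $x$ and $y$ in distinct components of its complement. The remaining step — passing from the closed separator $F$ to a \emph{connected} closed separator $\Delta$ — is exactly the point where planar topology must enter, and you correctly identify the Phragm\'en–Brouwer / unicoherence property of the plane as the tool: if a closed subset of a connected, locally connected, unicoherent space separates two points, then some connected component of it already separates them (Kuratowski, \emph{Topology} II, or Whyburn, \emph{Analytic Topology}). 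Since $\R^2$ is unicoherent, this applies verbatim and yields $\Delta\subset F\subset\C\setminus X$ as required.

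The paper does not prove this lemma itself but cites an external reference, so there is no in-paper proof to compare against; any proof must ultimately invoke unicoherence of the plane (or an equivalent such as Janiszewski's theorem) to extract a connected separator, and your argument does this in the standard way. One minor stylistic remark: once you have $U,V$, you could also take $W$ to be the component of $U$ containing $x$ and argue with $\partial W\subset F$; this is essentially the same and still requires a unicoherence-type input to ensure connectedness of the barrier, so your route is as clean as any.
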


\subsection*{The Lelek fan}
 As mentioned in the introduction, the \emph{Lelek fan} will play an important role in our discussions. It can be defined as follows. 

\begin{defn}[Lelek fans]\label{defn:lelekfan}
  Let $X$ be a \emph{continuum}, i.e.\ a compact, connected metric space. 
    Then $X$ is a \emph{fan} (with \emph{top} $x_0\in X$) if the following conditions are satisfied:
    \begin{enumerate}[(a)]
      \item $X$ is \emph{hereditarily unicoherent}; that is, if $A,B\subset X$ are subcontinua of $X$, then $A\cap B$ is connected.\label{item:unicoherent}
      \item $X$ is \emph{arcwise connected}. Hence, by ~\ref{item:unicoherent}, $X$ is \emph{uniquely} arcwise connected, and we use
        $[x,y]$ to denote the arc connecting $x,y\in X$. 
      \item $x_0$ is the only \emph{ramification point} of $X$ (that is, a common endpoint of at least three different arcs that are otherwise pairwise disjoint). 
    \end{enumerate}
   If $x\in X$ is an endpoint of every arc containing it, then $x$ is called an \emph{endpoint} of $X$; the set of all endpoints is denoted $E(X)$.

   A fan $X$ with top $x_0$ is called \emph{smooth} if, for any sequence $y_n$ converging to a point $y$, the arcs 
      $[x_0,y_n]$ converge to $[x_0,y]$ in the Hausdorff metric. 

   A smooth fan such that the endpoints of $X$ are dense in $X$ is called a \emph{Lelek fan}. 
\end{defn}

 In \cite[\S 9]{lelekfan}, Lelek gave an example of a fan $X$ with the above properties, and showed that 
    $E(X)\cup\{x_0\}$ has $x_0$ as an explosion point. Our terminology above is justified by the fact, proved independently in 
   \cite[Corollary on p.\ 33]{charatoniklelekfan} and in
   \cite{bulaoversteegen}, that any smooth fan with a dense set of endpoints is homeomorphic to that constructed by Lelek:

 \begin{prop}[Lelek fans and explosion points]\label{prop:lelekfan}
  \begin{enumerate}[(a)]
    \item
    Any two Lelek fans are homeomorphic.
     \item If $X$ is a Lelek fan and $x_0$ is the top of $X$, then $x_0$ is an explosion point for
    $E(X)\cup \{x_0\}$. 
   \end{enumerate}
 \end{prop}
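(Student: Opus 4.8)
The plan is to obtain both parts directly from the two facts recalled in the paragraph preceding the statement, so that no genuinely new argument is required.

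For part~(a) I would simply invoke the uniqueness theorem for the Lelek fan due to Charatonik \cite{charatoniklelekfan} and, independently, to Bula and Oversteegen \cite{bulaoversteegen}, already quoted above: every smooth fan with a dense set of endpoints is homeomorphic to the fan originally constructed by Lelek \cite{lelekfan}. In particular, any two Lelek fans are homeomorphic to each other.

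For part~(b) the key point is that all the relevant notions are topological invariants: being a fan, the \emph{top} of a fan (characterised intrinsically as the unique ramification point), the set of endpoints (characterised purely in terms of arcs), connectedness, total separation, and hence the property of being an explosion point, are all preserved by homeomorphisms. Lelek \cite[\S 9]{lelekfan} verified that for his fan $L$, with top $\ell_0$, the point $\ell_0$ is an explosion point of $E(L)\cup\{\ell_0\}$. Given an arbitrary Lelek fan $X$ with top $x_0$, part~(a) supplies a homeomorphism $\phi\colon X\to L$; since $x_0$ and $\ell_0$ are the unique ramification points of $X$ and $L$, necessarily $\phi(x_0)=\ell_0$, and $\phi$ maps $E(X)$ onto $E(L)$. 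Transporting Lelek's conclusion back along $\phi$ then shows that $x_0$ is an explosion point of $E(X)\cup\{x_0\}$, as claimed.

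Thus there is essentially no obstacle once one is willing to import the two cited results. If instead one insisted on a self-contained treatment, the genuinely hard ingredient would be part~(a): the Charatonik / Bula--Oversteegen characterisation of the Lelek fan is a delicate result. Part~(b) on its own, by contrast, admits a reasonably short direct argument via a concrete model: realising $X$ as a subcontinuum of the Cantor fan $(C\times[0,1])/(C\times\{0\})$ containing the collapsed point (possible for smooth fans, by work of Charatonik), one checks that $E(X)$ consists exactly of the ``tips'' of the arms; total separation of $E(X)$ then follows at once from total separation of the Cantor set $C$ (two tips lying over distinct points of $C$ are separated by a clopen ``sector''), while connectedness of $E(X)\cup\{x_0\}$ is precisely what the density of endpoints is designed to force, and is obtained essentially as in \cite{lelekfan}. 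I would nonetheless present the two-line deduction above rather than reproduce this analysis.
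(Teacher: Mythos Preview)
Your approach is correct and matches the paper's exactly: the paper does not give a proof of this proposition at all, but simply states it immediately after recalling Lelek's original result \cite[\S 9]{lelekfan} and the uniqueness theorem of \cite{charatoniklelekfan,bulaoversteegen}, leaving the reader to make precisely the two-line deduction you spell out. Your write-up is in fact more detailed than the paper's treatment, and the additional remarks about a self-contained argument for~(b) via the Cantor fan embedding are accurate but unnecessary here.
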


\subsection*{Straight brushes and Cantor Bouquets}
  We will encounter a variety of Lelek fans that are embedded in the Riemann sphere (with top $\infty$). However, there are several inequivalent
   embeddings of the Lelek fan, corresponding to the fact that some of the ``hairs'' (connected components of the complement of the top) might be
   approximated on
   by other hairs only from one side. In order to identify a preferred embedding, where this does not occur, Aarts and Oversteegen introduced the
   following terminology. (We remark that the concepts in this subsection are required only for the proof of Theorem~\ref{thm:denseray}.) 

 \begin{defn}[Straight brush {\cite[Definition~1.2]{aartsoversteegen}}] \label{defn:straightbrush}
    A \emph{straight brush} $B$ is a subset of $\{(y,\alpha)\in\R^2\colon y\geq 0\text{ and }\alpha\notin\Q\}$ with the following properties.
    \begin{description}
       \item[Hairiness] 
           For every $\alpha\in\R$, there is a $t_{\alpha}\in [0,\infty]$ such that 
             $(t,\alpha)\in B$ if and only if $t\geq t_{\alpha}$. 
      \item[Density] The set of $\alpha$ with $t_{\alpha}<\infty$ is dense in $\R$. Furthermore, for every
          such $\alpha$, there exist sequences $(\beta_n)$ and $(\gamma_n)$ such that $\beta_n \nearrow \alpha$ and $\gamma_n \searrow\alpha$, 
          and such that $t_{\beta_n}, t_{\gamma_n}\to t_{\alpha}$. 
     \item[Compact sections] $B$ is a closed subset of $\R^2$. 
    \end{description}
\end{defn}

  If $B$ is a straight brush, then clearly   
   its one-point compactification $B\cup\{\infty\}$ is a smooth fan. Furthermore, since $B$ is closed, the function
   $\alpha\mapsto t_{\alpha}$ is lower semicontinuous, and it follows easily that the set of endpoints is dense in $B$
   (compare~\cite[Corollary~2.5]{aartsoversteegen}). Hence $B\cup\{\infty\}$ is a Lelek fan.

 \begin{defn}[Cantor Bouquet {\cite[Definition~1.1]{brush}}]\label{defn:cantorbouquet}
  A closed set $A\subset\C$ is called a \emph{Cantor bouquet} if it is ambiently homeomorphic to a straight brush $B$; i.e., if there is
    a homeomorphism $\phi\colon\C\to \R^2$ such that $\phi(A)=B$. 
 \end{defn}
 \begin{remark}
   The term ``Cantor bouquet'' has been used informally since the 1980s; in particular,    
   Devaney and Tangerman~\cite[p.~491]{devaneytangerman} give 
    a definition of Cantor bouquets that differs from the one above. Definition~\ref{defn:cantorbouquet} first appears
     in~\cite[Definition~3.3]{devaneybouquets}. (We note that, in \cite{devaneybouquets}, 
     the homeomorphism is not required to be ambient, but this appears to have been an oversight.)
  \end{remark}

  The following characterisation of Cantor Bouquets, in the spirit of Definition~\ref{defn:lelekfan}, will be useful, although strictly speaking
   we shall not require it.

 \begin{thm}[Characterisation of Cantor bouquets]\label{thm:cantorbouquets}
    A closed set $A\subset\C$ is a Cantor bouquet if and only if
     \begin{enumerate}[(a)]
       \item $A\cup\{\infty\}$ is a Lelek fan with top $\infty$, and \label{item:lelekfan}
       \item if $x\in A$ is accessible from $\C\setminus A$, then $x$ is an endpoint of $A$. 
         (Equivalently, every hair of $A$ is accumulated on by other hairs from both sides.)\label{item:bothsides} 
     \end{enumerate}

   Moreover, if $\phi$ is any homeomorphism between Cantor bouquets that preserves the cyclic order of hairs at $\infty$, then
     $\phi$ extends to an orientation-preserving homeomorphism $\C\to\C$. 
 \end{thm}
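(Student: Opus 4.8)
The plan is to prove both directions of Theorem~\ref{thm:cantorbouquets} (the characterisation) together with the extension statement, using Aarts--Oversteegen's uniqueness result for straight brushes as the backbone. Recall that \cite{aartsoversteegen} establishes that any two straight brushes are ambiently homeomorphic; the task is to recognise abstractly when a planar set, after one-point compactification, is ambiently equivalent to a straight brush.

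\textbf{The ``only if'' direction.} Suppose $A$ is a Cantor bouquet, so there is a homeomorphism $\phi\colon\C\to\R^2$ with $\phi(A)=B$ a straight brush. We already observed in the text that $B\cup\{\infty\}$ is a Lelek fan with top $\infty$; transporting the fan structure through $\phi$ (which extends to a homeomorphism $\Ch\to\R^2\cup\{\infty\}=S^2$ fixing $\infty$) shows that $A\cup\{\infty\}$ is a Lelek fan with top $\infty$, giving~\ref{item:lelekfan}. For~\ref{item:bothsides}, I would argue directly in the model: a point $(t_\alpha,\alpha)\in B$ that is \emph{not} an endpoint has $t_\alpha=0$, i.e.\ it lies on the boundary line $\{y=0\}$, hence it is not accessible from $\R^2\setminus B$ because the hair $\{(t,\alpha):t\ge 0\}$ blocks access from within its own vertical line and, by the Density axiom, hairs $t_{\beta_n},t_{\gamma_n}\to 0$ approach from both sides. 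A clean way to phrase this: any arc in $(\C\setminus A)\cup\{x\}$ landing at a non-endpoint $x$ would, under $\phi$, give an arc landing at a point of $B$ with $t_\alpha=0$ from the complement, contradicting that such points are two-sided limits of hairs of arbitrarily small height. The parenthetical equivalence in~\ref{item:bothsides} is then a restatement, using that accessibility from $\C\setminus A$ of an interior hair point is exactly the failure of two-sided approximation.

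\textbf{The ``if'' direction.} This is the substantive part. Assume $A$ is closed, $A\cup\{\infty\}$ is a Lelek fan with top $\infty$, and every point of $A$ accessible from $\C\setminus A$ is an endpoint. The goal is to produce an ambient homeomorphism to a straight brush. The strategy is: (i) the hairs of $A$ (components of $A\setminus E(A)$, equivalently arc-components with an endpoint removed — here one must check a hair really is an arc, which follows from the fan axioms: each hair together with $\infty$ is an arc from $\infty$ to the endpoint, minus the endpoint) inherit a cyclic order at $\infty$, since $\Ch\setminus A$ is an open connected (by the Lelek-fan structure and condition~\ref{item:bothsides}) subset of the sphere and the hairs are accessed from it — more precisely, I would use that each hair has a well-defined pair of ``sides'' in $\C\setminus A$ and that condition~\ref{item:bothsides} forces these sides to be distinct; (ii) parametrise each hair by $[0,\infty]$ using the arc structure (with $\infty\mapsto\infty$ and the endpoint $\mapsto$ its height $t_\alpha$), compatibly with a Cantor-set cross-section obtained from the set of hairs with their cyclic order; (iii) assemble these into a homeomorphism onto an explicit straight brush, and finally (iv) invoke the Schoenflies-type argument of \cite{aartsoversteegen} to upgrade the homeomorphism of the sets to an ambient homeomorphism of $\C$. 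In practice the cleanest route is to \emph{not} reconstruct the brush by hand but to cite \cite[Theorem~?]{aartsoversteegen}: Aarts and Oversteegen prove precisely that a planar set whose one-point compactification is a Lelek fan, with the one-sided-approachability excluded, is a Cantor bouquet; so the ``if'' direction is essentially their theorem, and I would present it as such with a short bridge explaining why conditions~\ref{item:lelekfan}--\ref{item:bothsides} match their hypotheses.

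\textbf{The extension statement and the main obstacle.} For the last sentence, let $\phi\colon A_1\to A_2$ be a homeomorphism between Cantor bouquets preserving the cyclic order of hairs at $\infty$. Using the straight-brush models $B_i$ and the already-constructed ambient homeomorphisms $\psi_i\colon\C\to\R^2$ with $\psi_i(A_i)=B_i$, the map $\psi_2\circ\phi\circ\psi_1^{-1}$ is an order-preserving homeomorphism $B_1\to B_2$, and the core technical result of \cite{aartsoversteegen} is that such a map extends to an orientation-preserving homeomorphism $\R^2\to\R^2$; conjugating back by the $\psi_i$ gives the desired extension $\C\to\C$. The main obstacle — and the reason this theorem is stated as ``strictly speaking we shall not require it'' — is step~(i)/(iv) above: verifying that the abstract conditions~\ref{item:lelekfan}--\ref{item:bothsides} genuinely force the rigid combinatorial picture (a Cantor set of hairs with a consistent cyclic order and lower-semicontinuous height function) rather than merely an abstract Lelek fan, and that order-preserving homeomorphisms extend ambiently. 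Since all of this is carried out in \cite{aartsoversteegen}, I would keep the proof short, reducing each claim to a precise citation and spending the bulk of the exposition on the translation between the ``intrinsic'' fan language of Definition~\ref{defn:lelekfan} and the ``coordinate'' language of Definition~\ref{defn:straightbrush}.
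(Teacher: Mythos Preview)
Your proposal is broadly sound but takes a different route from the paper, and is vaguer at the key step. The paper's argument does not attempt to build a straight brush directly or cite a single ``characterisation'' theorem from \cite{aartsoversteegen}. Instead, it compactifies: starting from a candidate set $A$ satisfying~\ref{item:lelekfan} and~\ref{item:bothsides}, it takes the order-completion $\tilde{H}$ of the cyclically ordered set $H$ of hairs, observes (using~\ref{item:bothsides}) that $\tilde{H}$ is a circle, and compactifies $\C$ to a closed disc $\tilde{\C}=\C\cup\tilde{H}$ in which each hair lands at its corresponding point. The set $A\cup\tilde{H}$ is then a \emph{one-sided hairy circle} in the sense of \cite[Definition~4.7]{aartsoversteegen}, and \cite[Theorem~4.8]{aartsoversteegen} gives both uniqueness and the extension of homeomorphisms to the ambient disc in one stroke. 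This handles the ``if'' direction and the extension statement simultaneously, which is the main advantage: your approach needs a separate result stating that \emph{any} order-preserving homeomorphism between straight brushes extends to $\R^2$, and this is not quite what \cite{aartsoversteegen} proves directly (the paper's remark notes it can be obtained by adapting the proof of their Theorem~4.1).

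Two smaller points. First, your ``only if'' argument contains a slip: a non-endpoint of a straight brush is a point $(t,\alpha)$ with $t>t_\alpha$, not a point with $t_\alpha=0$; the inaccessibility then follows from the Density axiom since hairs at nearby heights $\beta_n,\gamma_n$ have $t_{\beta_n},t_{\gamma_n}\to t_\alpha<t$ and hence block access from both sides. Second, your assertion that ``Aarts and Oversteegen prove precisely'' the intrinsic characterisation~\ref{item:lelekfan}--\ref{item:bothsides} is optimistic: their results are phrased in the brush/hairy-arc language, and the translation is exactly the content of the theorem, which is why the paper supplies the compactification bridge rather than a bare citation.
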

 \begin{remark}
    To explain the hypothesis of the final statement, observe that 
     any family of pairwise disjoint arcs in $\C$, each having one finite endpoint and another endpoint at infinity, induces a natural \emph{cyclic order}.
      By this we mean that there is a consistent notion of whether three such arcs $\gamma_1,\gamma_2,\gamma_3$ are ordered in positive or
      negative orientation. 
      Indeed, if $\gamma_1$ and $\gamma_2$ are such arcs, then for sufficiently large $R$ the set 
      $\C\setminus (\overline{D_R(0)}\cup\gamma_1\cup \gamma_2)$ has precisely two unbounded connected components, one that lies between
      $\gamma_1$ and $\gamma_2$ (in positive orientation) and one that lies between $\gamma_2$ and $\gamma_1$. If $\gamma_3$ tends to
      infinity in the latter, then the triple is ordered in positive orientation. It is easy to see that this does indeed define
      a cyclic order. 

    In our applications, all curves will have real parts tending to $+\infty$, and in this case the cyclic order can be upgraded to a
     linear order, referred to as the \emph{vertical order}. (The curve $\gamma_1$ is \emph{below} $\gamma_2$ if $\gamma_1,\gamma_2,\alpha$ are
     ordered in positive orientation, where $\alpha$ is a horizontal line segment tending to $-\infty$. 
  \end{remark}
  \begin{proof}[Sketch of proof]
    Let us call a closed set $A\subset \C$ a \emph{candidate bouquet} if it satisfies~\ref{item:lelekfan} 
      and~\ref{item:bothsides}. Note  that any Cantor bouquet is a candidate bouquet.

    Now let $A$ be a candidate bouquet; as explained above, there is a natural cyclic order on the set 
      $H$ of hairs of $A$. Let $\tilde{H}$ be the order-completion of this cyclic order. The elements of
        $\tilde{H}\setminus H$ are in one-to-one correspondence with the set of homotopy classes of curves to infinity in
        $\C\setminus A$, and, using~\ref{item:bothsides}, the set
        $\tilde{H}$ is easily seen to be order-isomorphic to a circle.

We can now compactify the plane to a space $\tilde{\C}=\C\cup \tilde{H}$ in such a way that
    each hair $h$  of $A$ ends at the corresponding point $h\in H\subset\tilde{C}$. The space $\tilde{\C}$ is homeomorphic
     to the closed unit disc, and under this correspondence the set $A\cup \tilde{H}$ becomes a
     \emph{one-sided hairy circle} in the sense of \cite[Definition~4.7]{aartsoversteegen} (essentially, a circle with 
      a collection of hairs densely attached on one side).

   By \cite[Theorem~4.8]{aartsoversteegen}, any two one-sided hairy circles $X$ and  $Y$ are homeomorphic, and
     any such homeomorphism extends to  a homeomorphism of the sphere.  Observe that any
     homeomorphism must map the base circle of $X$ to that of $Y$.

Applying this result to the above compactifications of two candidate bouquets $A$ and $B$, we see that
   $A$ and  $B$ are ambiently homeomorphic. Furthermore, any homeomorphism that
    preserves the cyclic order of the hairs will extend  to a homeomorphism of their compactifications, and hence
     also to  an orientation-preserving homeomorphism of  the plane.
  \end{proof} 
\begin{remark}
   Alternatively, the proof of Proposition~\ref{prop:lelekfan} (as given in \cite{bulaoversteegen}) can be modified to 
      show that, for any two sets satisfying~\ref{item:lelekfan} 
     and~\ref{item:bothsides}, there is a homeomorphism that preserves the cyclic order of hairs. 
     The first part of the theorem thus follows 
     from the second, which in turn can be proved in the same way as \cite[Theorem~4.1]{aartsoversteegen}. \end{remark}

\section{A topological model for exponential maps} \label{sec:model}

 In~\cite[\S1]{aartsoversteegen}, it was shown that $J(f_a)$ is a Cantor Bouquet for $a\in (-\infty,-1)$. The straight brush constructed there
   was defined in dependence on the parameter $a$, and is hence not easy to analyse directly. Instead, we use a similar construction from
   \cite{topescaping}, which has the advantage that the brush arises in a straightforward way from an abstract dynamical system $\F$, defined without
   referring to a specific exponential map. As a result, the resulting dynamics is easy to analyse, and furthermore can be connected to
   the dynamical plane of the exponential map $f_a$ for \emph{any} $a\in\C$. We now review the necessary
   definitions, and then prove the analog of Theorem~\ref{thm:escapingendpointsexplode} for this model function. 

   For the remainder of the article,
    a sequence $\s=s_0 s_1 s_2 \dots \in \Z^{\N_0}$ of integers shall be called an (infinite) \emph{external address}. Our model lives in the space
   $\R\times \Z^{\N_0}$ (with the product topology). If $x=(t,\s)$, we write 
    $T(x)\defeq t$ and $\extaddr(x)\defeq \s \eqdef s_0(x) s_1(x) \dots$ for the projection maps to the first and second coordinates. 
   We also fix the function
    $F\colon [0,\infty)\to [0,\infty); t\mapsto e^t-1$, which serves as a model for 
    exponential growth\footnote{%
 For many purposes, it would be slightly more convenient to use the function $2F$ instead of $F$ in the definitions. All the basic properties of the 
   construction would remain unchanged under this modification, but $2F$ has the advantage of being uniformly expanding. However, since 
    this does not result in any real simplifications for our purposes, we shall retain the definition as made in~\cite{topescaping}.}. We frequently use elementary properties of the function $F$ that can be verified via high-school calculus; in particular, 
     \begin{align}
        F(x+\delta) &\geq F(x)+F(\delta) \geq F(x) + \delta \qquad\text{and} \label{eqn:Fconvex}  \\
       F^{-n}(x) &\to 0 \qquad (n\to\infty)\label{eqn:Fshrinks}
     \end{align}
    for all $x,\delta\geq 0$. 

  \begin{defn}[A topological model for exponential dynamics \cite{topescaping}]
    Define 
  \[ \F\colon [0,\infty)\times \Z^{\N_0} \to \R\times \Z^{\N_0}; 
         (t,\s) \mapsto (F(t) - 2\pi |s_1| , \sigma(\s)), \]
    where $\sigma$ denotes the shift map on one-sided infinite sequences. We set 
   \begin{align*}
       J(\F) &\defeq \{ x\in [0,\infty)\times \Z^{\N_0}\colon T(\F^n(x)) \geq 0 \text{ for all $n\geq 0$}\} \qquad\text{and} \\
        I(\F) &\defeq \{ x \in J(\F)\colon T(\F^n(x)) \to \infty \}. 
    \end{align*}
    We also denote the one-point compactification of $J(\F)$ by $\hat{J}(\F) \defeq J(\F)\cup\{\infty\}$. 

   Let $\s \in \Z^{\N_0}$. If there is $x\in J(\F)$ with $\extaddr(x)=\s$, then $\s$ is called \emph{exponentially bounded}. 
     We define
      \[ \ts \defeq \begin{cases} \min\{t\geq 0\colon (t,\s)\in J(\F)\} & \text{if $\s$ is exponentially bounded} \\
                       \infty & \text{otherwise}.\end{cases} \]
    If $\s$ is exponentially bounded, then $(\s, \ts)$ is called an \emph{endpoint} of $J(\F)$; if additionally
      $(\s,\ts)\in I(\F)$, then it is called an \emph{escaping endpoint}, and $\s$ is called
    \emph{fast}. We write $E(\F)$ and $\tilde{E}(\F)$ for the sets of endpoints and
      escaping endpoints, respectively. 
 \end{defn}
\begin{remark}[Remark 1]
  In \cite{topescaping}, $J(\F)$ was denoted $\bar{X}$, while $I(\F)$ was denoted $X$. We prefer to use the above notation in analogy to 
    exponential maps. 
  Furthermore,
   external addresses 
   were indexed beginning with index 1 (following the convention established in~\cite{expescaping}). 
   However, it seens more natural to begin with index 0, so, when comparing this paper
    with the relevant results in \cite{topescaping}, 
    all indices should be shifted by $1$. 
\end{remark}

  We refer to \cite[\S3]{topescaping} for further discussion of the motivation behind these definitions. 
    It is proved in \cite[\S9]{topescaping} that $\F|_{J(\F)}$ is topologically conjugate to the map $f_{a}$ on its Julia set, for every $a\in (-\infty,-1)$. 
  In particular, as a consequence of \cite{aartsoversteegen}, we see that $\hat{J}(\F)$ is a Lelek fan. 
  In fact, we can say more if we consider $\hat{J}(F)$ as being embedded in $\R^2$ as follows.
\begin{obs}[Embedding in $\R^2$]\label{obs:embedding}
   It is well-known that $\Z^{\N_0}$ (with respect to lexicographic order) is order-isomorphic to the set $\R\setminus\Q$ of all irrational numbers. 
   Hence there is an order-preserving homeomorphism $\alpha\colon \Z^{\N_0}\to \R\setminus\Q$ (compare \cite[\S1.1]{aartsoversteegen} or 
     \cite[\S3.2]{devaneybouquets}). In the following, we shall hence
    identify $\Z^{\N_0}$ with the set of irrational numbers; thus $J(\F)$ can be thought of as a subset of $\R^2$, via the embedding 
    $(t,\s)\mapsto (t,\alpha(\s))$. For ease of terminology, will usually not distinguish between $J(\F)$ and its image under this embedding. 
\end{obs}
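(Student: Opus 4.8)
\textbf{Plan of proof for Observation~\ref{obs:embedding}.}
The statement has two parts: first, that $\Z^{\N_0}$ with the lexicographic order is order-isomorphic to $\R\setminus\Q$, and hence carries an order-preserving homeomorphism onto it; second, that the resulting map $(t,\s)\mapsto(t,\alpha(\s))$ embeds $J(\F)$ into $\R^2$. For the first part, the plan is to appeal to Cantor's classical characterisation: a linearly ordered set is order-isomorphic to $\R\setminus\Q$ (equivalently, to the irrationals, or to $\Q$'s complement in $\R$) precisely when it is countable, densely ordered, without endpoints, and — the condition distinguishing it from $\Q$ itself — when it has no ``gaps'' realised by jumps but does contain gaps not realised by a least upper bound; more cleanly, one uses the known fact (see \cite[\S1.1]{aartsoversteegen} or \cite[\S3.2]{devaneybouquets}) that $\Z^{\N_0}$ with lexicographic order is order-isomorphic to $\R\setminus\Q$ via an explicit continued-fraction-style coding. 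I would simply cite this, and note that an order isomorphism between two subsets of $\R$ that are dense-in-themselves is automatically a homeomorphism for the subspace topologies, since order-convex sets form a basis and order isomorphisms preserve these; one must also check that the product topology on $\Z^{\N_0}$ coincides with the order topology, which follows because cylinder sets $\{s_0=c_0,\dots,s_{n-1}=c_{n-1}\}$ are precisely the order-convex clopen sets, and these generate both topologies.

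For the second part, once $\alpha\colon\Z^{\N_0}\to\R\setminus\Q$ is an order-preserving homeomorphism, the map $\Phi\colon\R\times\Z^{\N_0}\to\R\times(\R\setminus\Q)\subset\R^2$, $(t,\s)\mapsto(t,\alpha(\s))$, is a homeomorphism onto its image by general nonsense (product of homeomorphisms), so its restriction to $J(\F)$ is an embedding. The only genuine content here is the implicit claim that this embedding is \emph{compatible with the dynamical and topological structure} we care about — in particular that the vertical order on hairs of $J(\F)$ induced by $\alpha$ agrees with the combinatorial (lexicographic) order on external addresses, which is exactly what ``order-preserving'' buys us, and which is what later sections will use when reasoning about hairs lying above or below one another. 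I would make this explicit: hair $\s$ lies below hair $\s'$ in the planar embedding if and only if $\s <_{\mathrm{lex}} \s'$.

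The proof is essentially a bookkeeping exercise assembling standard facts, so there is no serious obstacle; the one point requiring a sentence of care is the verification that the product topology on $\Z^{\N_0}$ is its order topology, since it is tempting to take this for granted. Concretely, a basic open set in the order topology is an intersection of $\{\s : \s > \addt\}$ and $\{\s : \s < \addu\}$; given a point $\s$ in such a set one finds $n$ large enough that the cylinder $[s_0,\dots,s_{n-1}]$ around $\s$ lies inside it (possible because $\addt <_{\mathrm{lex}} \s$ and $\addu >_{\mathrm{lex}} \s$ force them to differ from $\s$ within the first $n$ coordinates for suitable $n$, using that there is no immediate successor), and conversely every cylinder is order-convex and open; hence the two topologies coincide, and since $\R\setminus\Q$ likewise carries its order topology as a subspace of $\R$, the order isomorphism $\alpha$ is automatically a homeomorphism. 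With this in hand the embedding statement is immediate.
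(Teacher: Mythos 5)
Your proposal is correct and takes essentially the same route as the paper, which states Observation~\ref{obs:embedding} without proof by citing the same references for the order-isomorphism and then simply recording the resulting embedding; you supply the routine verification (order topology $=$ product topology on $\Z^{\N_0}$, order topology $=$ subspace topology on $\R\setminus\Q$) that the paper leaves implicit. One tiny imprecision: cylinder sets are not \emph{precisely} the order-convex clopen sets (finite unions of consecutive cylinders are also order-convex and clopen), but this does not affect the argument since all you need is that cylinders form a basis that is open in both topologies.
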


\begin{thm}[The model is a straight brush]
  $J(\F)$ is a straight brush. Hence $\hat{J}(\F)$ is a Lelek fan and $\infty$ is an explosion point for $E(\F)$. 
\end{thm}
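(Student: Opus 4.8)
The plan is to verify the three defining properties of a straight brush (Definition~\ref{defn:straightbrush}) for the set $J(\F)$, viewed as a subset of $\R^2$ via the embedding of Observation~\ref{obs:embedding}; the remaining conclusions then follow immediately, since the one-point compactification of a straight brush is a Lelek fan (as noted after Definition~\ref{defn:straightbrush}), and $\infty$ is an explosion point of the set of endpoints of any Lelek fan by Proposition~\ref{prop:lelekfan}.

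\textbf{Hairiness.} First I would show that, for each external address $\s$, the set $\{t\geq 0 : (t,\s)\in J(\F)\}$ is either empty or a closed half-line $[\ts,\infty)$. The key observation is that the condition $(t,\s)\in J(\F)$ is equivalent to $T(\F^n(t,\s))\geq 0$ for all $n$, and one computes directly that $T(\F^n(t,\s)) = F^n(t) - (\text{something depending only on }\s)$; more precisely, writing out the recursion, $T(\F^{n}(t,\s))$ is an increasing function of $t$ for each fixed $n$ (since $F$ is increasing and each step subtracts a quantity $2\pi|s_{k+1}|$ that does not depend on $t$). Hence the set of admissible $t$ is upward closed. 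Closedness in $t$ follows because each $T(\F^n(\cdot,\s))$ is continuous, so $J(\F)$ is an intersection of closed conditions; thus the infimum $\ts$ is attained when the set is nonempty, giving exactly $[\ts,\infty)$. Under the order-isomorphism $\alpha$ this transfers to the hairiness condition with $t_{\alpha(\s)}=\ts$ (and for rational $y$, which are not in the image of $\alpha$, there is nothing to check, or one sets $t_y=\infty$).

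\textbf{Compact sections (closedness in $\R^2$).} Next I would check that $J(\F)\subset[0,\infty)\times\Z^{\N_0}$ is closed in $\R\times\Z^{\N_0}$, hence — since $\alpha$ is a homeomorphism onto $\R\setminus\Q$ and the relevant set sits inside $[0,\infty)\times(\R\setminus\Q)$ — that its image is closed in $\R^2$. This requires a small argument: a limit of points in $J(\F)$ could a priori have an external address with unbounded entries, so one must rule this out. Because $T(\F^n(t,\s))\geq 0$ forces, for each $n$, an inequality of the form $F^n(t)\geq 2\pi|s_{n}| + (\text{nonnegative terms})$, so in particular $|s_n|\leq F^n(t)/(2\pi)$ whenever $(t,\s)\in J(\F)$. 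Thus on the slice $\{t\leq R\}$ the entry $s_n$ is bounded by $F^n(R)/(2\pi)$, so the portion of $J(\F)$ with bounded $T$-coordinate lies in a compact subset of $\R\times\Z^{\N_0}$, and a standard diagonal/limit argument (using continuity of each $\F^n$ and of the coordinate $T$) shows the limit point still satisfies all the inequalities. This gives closedness. I expect this to be the main technical obstacle, since it is where one must be careful about the product topology on $\R\times\Z^{\N_0}$ versus the topology of $\R^2$ under the embedding $\alpha$, and where the precise growth estimate \eqref{eqn:Fconvex} on $F$ is used.

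\textbf{Density.} Finally I would verify the density condition: the set of $\s$ with $\ts<\infty$ (equivalently, the exponentially bounded addresses) is dense, and each such $\s$ is approximated from both sides by addresses $\underline{\beta}^{(n)}\nearrow\s$ and $\underline{\gamma}^{(n)}\searrow\s$ with $t_{\underline{\beta}^{(n)}},t_{\underline{\gamma}^{(n)}}\to\ts$. For density: any address that is eventually $0$ (i.e.\ $s_0\dots s_{k-1}0\,0\,0\dots$) is exponentially bounded, since after finitely many steps $\F$ acts as $t\mapsto F(t)$, and such addresses are dense in the lexicographic order. For the two-sided approximation, given $\s$ with $\ts<\infty$, I would build $\underline{\beta}^{(n)}$ by agreeing with $\s$ on the first $n$ coordinates and then decreasing the $(n{+}1)$-st entry slightly and continuing with zeros (and symmetrically, increasing it, for $\underline{\gamma}^{(n)}$), choosing the perturbation small enough in $|s_n|$ that the finitely many constraints that were not already tight for $\s$ remain satisfied; one checks that the threshold $t$ for these perturbed addresses differs from $\ts$ only through the first $n$ constraints, which converge to those of $\s$, so $t_{\underline\beta^{(n)}}\to\ts$. (Here one uses again that $s_n\leq F^n(\ts)/(2\pi)$, so increasing $s_n$ by $1$ changes the $n$-th constraint by at most a controlled amount, and $F^{-n}\to 0$ by \eqref{eqn:Fshrinks} damps the effect.) Under $\alpha$, agreeing on the first $n$ coordinates means being within a small interval of $\alpha(\s)$, so $\underline{\beta}^{(n)},\underline{\gamma}^{(n)}$ indeed converge to $\s$ from below and above respectively, completing the verification.

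Once the three properties are established, $J(\F)$ is a straight brush by definition; then $\hat{J}(\F)=J(\F)\cup\{\infty\}$ is a smooth fan whose endpoints are dense (as recorded in the paragraph following Definition~\ref{defn:straightbrush}), hence a Lelek fan, and Proposition~\ref{prop:lelekfan}(b) gives that $\infty$ is an explosion point for $E(\F)$.
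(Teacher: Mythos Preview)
Your proposal is correct and follows essentially the same approach as the paper: verify hairiness, closedness, and density directly from the definition of $\F$. The paper's proof is terser—it cites \cite[Observation~3.1]{topescaping} for hairiness and closedness, uses Lemma~\ref{lem:ts} for density of exponentially bounded addresses, and defers the two-sided approximation to Theorem~\ref{thm:JQbouquet}\ref{item:denseinJQ} with $Q=0$—while you spell out each step explicitly; in particular, your compactness argument for closedness (bounding $|s_n|$ by $F^n(R)/(2\pi)$ on $\{t\leq R\}$) is exactly the content of the paper's parenthetical remark that $\{\s:\ts\leq t\}$ is compact, and your perturbation construction for density is a simpler variant (using trailing zeros) of the one the paper gives later in the proof of Theorem~\ref{thm:JQbouquet}.
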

\begin{proof}
  This is essentially proved in \cite[Section~1.6]{aartsoversteegen}, although the brush there is defined slightly differently. For this 
    reason, we sketch how to prove the result directly. The fact that $J(\F)$ is closed in $\R^2$, as well as ``hairiness'', 
    follows immediately from the definition of
    $\F$ and $J(\F)$; see \cite[Observation~3.1]{topescaping}. (Observe that, for any $t\geq 0$, the set of addresses with
     $\ts \leq t$ form a compact subset of $\Z^{\N_0}$.)  It is easy to see that $\ts <\infty$ for any bounded sequence $\s$
    (see Lemma~\ref{lem:ts} below), and hence the set of addresses with $\ts < \infty$ is dense. Finally, given
    $x\in J(\F)$, it is easy to construct sequences that converge to $x$ from both above and below; this is also a consequence of
   Theorem~\ref{thm:JQbouquet}~\ref{item:denseinJQ} below (take $Q=0$). 
\end{proof} 
 
 Our main goal for the remainder of this section is to prove the following. 

\begin{thm}[Escaping endpoints of $\F$ explode]\label{thm:Fexplodes}
  The point $\infty$ is an explosion point for $\tilde{E}(\F)$. 
\end{thm}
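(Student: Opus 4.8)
The plan is to establish Theorem~\ref{thm:Fexplodes} by verifying the two halves of the definition of an explosion point separately. First I would prove that $\tilde{E}(\F)$ is totally separated, and then that $\tilde{E}(\F)\cup\{\infty\}$ is connected; the latter is the substantive half, since it requires exhibiting a suitable Lelek subfan.

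\textbf{Total separation.}
For the first half, I would exploit the fact that $J(\F)$ sits inside $\R^2$ as a straight brush (Observation~\ref{obs:embedding} and the preceding theorem), so that separation can be detected by Lemma~\ref{lem:planeseparation}. Given two distinct escaping endpoints $x=(t_{\s},\s)$ and $y=(t_{\addt},\addt)$, their addresses $\s\neq\addt$ differ; after applying a suitable iterate $\F^n$ (which, by Observation~\ref{obs:preimageseparation}, it suffices to separate the images) I may assume $s_0\neq t_0$. Pick an integer $m$ strictly between $s_0$ and $t_0$ (or, more carefully, a value of the coordinate $\alpha$ strictly between $\alpha(\s)$ and $\alpha(\addt)$ that is \emph{not} realised by any address of an exponentially bounded sequence at the relevant heights). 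Then the vertical line segment, or rather the full vertical line $\{(u,\beta)\colon u\in\R\}$ at that irrational height $\beta$, is a closed connected set in $\R^2\setminus J(\F)\subset \R^2\setminus \tilde{E}(\F)$ separating $x$ from $y$; this is exactly the classical argument that the endpoints of a straight brush form a totally separated (indeed totally disconnected) set, and it already appears implicitly in the cited work of Aarts--Oversteegen. The only mild care needed is to choose $\beta$ irrational and at a height below both $t_{\s}$ and $t_{\addt}$ so that the line misses $J(\F)$ entirely; since the set of $\alpha$ with $t_{\alpha}<\infty$ is only dense (not all of $\R$), and the function $\alpha\mapsto t_\alpha$ is lower semicontinuous, such heights are plentiful.

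\textbf{Connectedness: the main step.}
The harder half is to show $\tilde{E}(\F)\cup\{\infty\}$ is connected. Following the strategy advertised in the introduction, I would construct an $\F$-invariant subset $A\subset J(\F)$ such that $A\cup\{\infty\}$ is a Lelek fan with top $\infty$ and every endpoint of $A$ is an \emph{escaping} endpoint of $\F$; then Proposition~\ref{prop:lelekfan}(b) gives that $E(A)\cup\{\infty\}$ has $\infty$ as an explosion point, in particular is connected, and since $E(A)\subset \tilde{E}(\F)$ this set is a connected subset of $\tilde{E}(\F)\cup\{\infty\}$ whose closure contains (indeed equals, after adding $\infty$) all of... well, it suffices that it be dense. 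Concretely, I expect $A$ to be cut out by a growth condition on addresses: fix a rapidly increasing sequence, e.g. require $s_{n+1}(x)$ to be controlled by $T(\F^n(x))$ from below along the hair, forcing $T(\F^n(x))\to\infty$ for \emph{every} point of $A$, not merely the endpoints. A clean choice is to take $A \defeq \{x\in J(\F)\colon 2\pi|s_{n+1}(x)|\le F^n(T(x))\text{ grows, and } T(\F^n(x))\ge n \text{ for all }n\}$ or similar; the point is to pick the constraint loose enough that the set of admissible addresses is still a ``full'' Cantor set (so endpoints stay dense, giving the Lelek property via the straight-brush criterion applied to $A$ with the induced order), yet tight enough that $A\subset I(\F)$, so that \emph{all} of $A$, and in particular $E(A)$, escapes. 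That $A\cup\{\infty\}$ is a smooth fan with dense endpoints I would verify exactly as in the proof that $J(\F)$ is a straight brush: closedness and hairiness of the set of admissible $(t,\s)$ are immediate from the definitions and the continuity of $F$; density of admissible addresses follows by a direct construction of approximating sequences from above and below (as in Theorem~\ref{thm:JQbouquet}\ref{item:denseinJQ}); and smoothness is automatic for one-point compactifications of straight brushes.

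\textbf{Expected obstacle.}
The main difficulty is the balancing act in defining $A$: one must simultaneously ensure (i) $A\subseteq I(\F)$, so every point is escaping --- this needs the growth of the addresses (through the $-2\pi|s_1|$ term in $\F$) not to overwhelm the exponential growth coming from $F$, which I would control using the convexity estimate~\eqref{eqn:Fconvex}; and (ii) the admissible addresses still form a set on which the induced structure is a Lelek fan, which requires that at every admissible address one can perturb the higher entries in both directions while remaining admissible --- i.e. the constraints must leave ``room'' on both sides at every level. Getting a single explicit condition that does both, and then checking the straight-brush axioms for the resulting $A$ (in particular the two-sided density of endpoints), is the technical heart of the argument; everything else is bookkeeping with $F$ and the shift. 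An alternative that might streamline the write-up is to build $A$ as a nested intersection / inverse limit of finite-level approximations, making the Cantor-set structure of the address space transparent, at the cost of a slightly more abstract presentation.
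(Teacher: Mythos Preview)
Your total-separation argument is workable but overcomplicated (and the details are garbled: you write ``vertical line'' for what is a horizontal line in the brush coordinates, and you want a \emph{rational} height, not an irrational one, so that the line misses the brush entirely). The paper disposes of this half in two lines: the second-coordinate projection $E(\F)\to\Z^{\N_0}$ is continuous and injective, and $\Z^{\N_0}$ is totally separated, so $E(\F)$ and hence $\tilde{E}(\F)$ is totally separated by Observation~\ref{obs:preimageseparation}. No geometry in the plane is needed.

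The connectedness half has a genuine gap. You propose to exhibit a \emph{single} Lelek subfan $A\subset J(\F)$ with $E(A)\subset \tilde{E}(\F)$ and then argue that $E(A)$ is dense in $\tilde{E}(\F)$. There are two problems. First, with an \emph{orbit}-based condition such as $T(\F^n(x))\geq n$, the endpoint of $A$ at a given address $\s$ is $(t_A(\s),\s)$ with $t_A(\s)$ the least $t$ satisfying the growth condition; for slow addresses (or even for fast addresses whose escape is slower than your threshold) this is strictly larger than $t_{\s}$, so this endpoint is \emph{not} an endpoint of $J(\F)$ and hence not in $\tilde{E}(\F)$. Second, even if you switch to a purely \emph{address}-based condition (so that $E(A)\subset\tilde{E}(\F)$ holds), a single such $A$ cannot have $E(A)$ dense in $\tilde{E}(\F)$: any address condition excludes a neighbourhood (in $\Z^{\N_0}$) of every address that violates it, and there are fast addresses violating any fixed lower-bound condition, so the corresponding escaping endpoints are isolated from $E(A)$. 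Your trailing ``it suffices that it be dense'' is exactly the step that cannot be completed for a single $A$.

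The paper's remedy is to use a \emph{family} of subfans rather than one. For each fast address $\s^0$, set
\[
   X_{\s^0}(\F) \defeq \{x\in J(\F)\colon |s_n(x)|\geq |s_n^0|\text{ for all }n\},
\]
a condition on addresses only. Theorem~\ref{thm:subfans} shows each $\hat X_{\s^0}(\F)$ is a Lelek fan whose endpoints all lie in $\tilde{E}(\F)$ (via Observation~\ref{obs:potentials}). The point you are missing is that the escaping endpoint $(t_{\s^0},\s^0)$ itself belongs to $E(X_{\s^0}(\F))$ trivially, so
\[
   \tilde{E}(\F)\cup\{\infty\} \;=\; \bigcup_{\s^0\text{ fast}} \bigl(E(X_{\s^0}(\F))\cup\{\infty\}\bigr),
\]
a union of connected sets all containing $\infty$, hence connected. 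No density argument is needed; every escaping endpoint is \emph{literally} an endpoint of its own subfan.
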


Theorem~\ref{thm:Fexplodes} implies Theorem~\ref{thm:escapingendpointsexplode} in the case where $a\in (-\infty,-1)$. Moreover, for \emph{every}
   exponential map $f_a$, there is a correspondence between points in $I(\F)$ and the set of escaping points of $f_a$
    (see Theorem~\ref{thm:boettcher} below), 
   and hence the results proved in this section also aid us in establishing Theorem~\ref{thm:escapingendpointsexplode} in general. 

The key idea is to identify suitable subsets of $\hat{J}(\F)$ that are still Lelek fans, but
   which have the property that each of their endpoints belongs to $\tilde{E}(\F)$. 

\begin{defn}[Sub-fans of $\hat{J}(\F)$]
   Let $\s^0\in \Z^{\N_0}$ be arbitrary. We define
     \[ X_{\s^0}(\F) \defeq \{x\in J(\F)\colon |s_n(x)| \geq |s^0_n| \text{ for all $n\geq 0$}\}\]
    and $\hat{X}_{\s^0}(\F) \defeq X_{\s^0}(\F)\cup \{\infty\}$. 
\end{defn}

\begin{thm}[Sub-fans are Lelek fans]\label{thm:subfans}
   If $\s^0$ is exponentially bounded, then $\hat{X}_{\s^0}(\F)$ is a Lelek fan. If, additionally,
    $\s^0$ is fast, then every endpoint of $\hat{X}_{\s^0}(\F)$ belongs to $\tilde{E}(\F)$. 
\end{thm}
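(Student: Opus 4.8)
The plan is to establish the two assertions separately, both by exploiting the structure of $X_{\s^0}(\F)$ as a subset of the straight brush $J(\F)$. For the first assertion, that $\hat{X}_{\s^0}(\F)$ is a Lelek fan, the natural route is to show that $X_{\s^0}(\F)$ is again a straight brush (or at least ambiently homeomorphic to one), so that Proposition~\ref{prop:lelekfan} applies via the discussion following Definition~\ref{defn:straightbrush}. The condition $|s_n(x)|\geq |s^0_n|$ for all $n$ is a ``closed'' condition on addresses: it cuts out a closed, order-convex-free subset of $\Z^{\N_0}$ at each level, and intersecting over all $n$ yields a closed subset $\Sigma \subseteq \Z^{\N_0}$. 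First I would check that $X_{\s^0}(\F)$ inherits ``hairiness'' and ``compact sections'' immediately from $J(\F)$ (the defining inequalities are preserved under the $t$-direction, and closedness is clear). The real content is ``density'': I must show that the set of $\s\in\Sigma$ with $t_{\s}<\infty$ is dense \emph{in $\Sigma$}, and that each such hair is approximated from both sides \emph{within $\Sigma$}. Here the hypothesis that $\s^0$ is exponentially bounded is essential — it guarantees $\Sigma$ is nonempty and, more importantly, that one can perturb a given address $\s\in\Sigma$ by increasing finitely many entries (staying in $\Sigma$) to produce nearby bounded, hence $t$-finite, addresses, and by increasing entries far out in the sequence to produce two-sided approximants. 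The bookkeeping that such perturbations keep $t_{\s}$ finite should follow from an estimate like Lemma~\ref{lem:ts} (referenced but not yet stated in the excerpt) together with the convexity inequality~\eqref{eqn:Fconvex}.

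For the second assertion, assume additionally that $\s^0$ is fast, i.e.\ $(\s^0, t_{\s^0})\in I(\F)$. Let $x=(t,\s)$ be an endpoint of $\hat{X}_{\s^0}(\F)$; I want to show $x\in\tilde E(\F)$, equivalently that $T(\F^n(x))\to\infty$. The key observation is a \emph{monotonicity} or \emph{domination} principle for the model dynamics: since $\F$ acts on the $t$-coordinate by $t\mapsto F(t)-2\pi|s_1|$ and the shift acts on addresses, and since $F$ is increasing, a pointwise-larger address (in the sense $|s_n|\geq|s^0_n|$ for all $n$) forces the orbit of $T$ to dominate the orbit of a point with address $\s^0$ — more precisely, I expect that the minimal $t$-value compatible with address $\s$, namely $t_{\s}$, satisfies $t_{\s}\geq t_{\s^0}$ after suitable alignment, and that this domination propagates under iteration because $\extaddr(\F^n(x))=\sigma^n(\s)$ also pointwise dominates $\sigma^n(\s^0)$. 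Combining this with the fact that endpoints of the sub-fan must sit at the \emph{bottom} of their hair (being endpoints of the brush $X_{\s^0}(\F)$, hence having $t=t_{\s}$ computed relative to $\Sigma$ — I should check this coincides with the ambient $t_{\s}$), I would conclude $T(\F^n(x))\geq T(\F^n(\s^0,t_{\s^0}))\to\infty$.

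The main obstacle, I expect, is the ``density from both sides within $\Sigma$'' part of the first assertion: perturbing an address while respecting \emph{all} the lower-bound constraints $|s_n|\geq|s^0_n|$ simultaneously, and simultaneously controlling $t_{\s}$, is more delicate than in the unconstrained brush $J(\F)$, because one is no longer free to decrease entries. I anticipate handling this by always perturbing \emph{upward} (increasing $|s_n|$ for finitely many large $n$), which automatically keeps one in $\Sigma$, and then arguing that finitely many large perturbations change $t_{\s}$ by an arbitrarily small amount — this is where~\eqref{eqn:Fshrinks} enters, since the effect of a bounded perturbation at position $n$ on $t_{\s}$ is controlled by $F^{-n}$ of a constant, which tends to $0$. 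For the two-sidedness I would use that $\Z$ is unbounded both above and below, so entries can be pushed to $+\infty$ or $-\infty$ in absolute-value-increasing fashion while landing on either side of $\s$ in the lexicographic order, corresponding to either side of the hair under the embedding of Observation~\ref{obs:embedding}. A secondary technical point to nail down is that the intrinsic bottom-of-hair value for the sub-brush agrees with the ambient $t_{\s}$; this should be immediate once one observes that $t_{\s}$ depends only on $\s$, not on which brush one regards $(t,\s)$ as living in.
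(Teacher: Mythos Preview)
Your treatment of the second assertion is exactly what the paper does: the domination principle ($|s_n|\geq|s_n^0|$ for all $n$ forces $t_{\s}\geq t_{\s^0}$, and likewise for all shifts) is isolated as Observation~\ref{obs:potentials} and gives the claim immediately. Your worry about the ``intrinsic'' versus ``ambient'' value of $t_{\s}$ is a non-issue, as you suspect: the constraint defining $X_{\s^0}(\F)$ is purely on the address, so the hair over $\s\in\Sigma$ is the same in both brushes.

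For the first assertion, however, your chosen framing has a gap. You propose to verify that $X_{\s^0}(\F)$ is a straight brush, but under the embedding of Observation~\ref{obs:embedding} it is \emph{not} one: the density axiom in Definition~\ref{defn:straightbrush} requires $\{\alpha: t_\alpha<\infty\}$ to be dense in all of~$\R$, whereas the constraint $|s_n|\geq|s_n^0|$ removes entire intervals of addresses (for instance, every address with $s_0=0$ when $s_0^0\neq 0$). One can rescue this by observing that $\Sigma$ is again order-isomorphic to the irrationals and re-embedding, but that is additional work you do not mention and that the paper avoids. The paper instead argues directly from the Lelek-fan characterisation: $X_{\s^0}(\F)$ is closed in $J(\F)$, so $\hat X_{\s^0}(\F)$ is a subcontinuum of the Lelek fan $\hat J(\F)$; any non-arc subcontinuum of a smooth fan through the top is again a smooth fan; hence only density of endpoints remains.

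For that density, your sketch (``bounded perturbation at position $n$, effect controlled by $F^{-n}$'') is not quite the right mechanism. To approximate an arbitrary $x=(t,\s)\in X_{\s^0}(\F)$ by endpoints, one needs $t_{\s^j}\to t$, not merely $t_{\s^j}\to t_{\s}$; a bounded change to a single far-out entry only achieves the latter. The paper replaces the single entry $s_{j+1}$ by $\lceil F(T(\F^j(x)))/2\pi\rceil$, an \emph{unbounded} perturbation chosen precisely so that $t_{\sigma^j(\s^j)}$ lies within a fixed constant of $T(\F^j(x))$ (via Lemma~\ref{lem:ts}); backward shrinking (Observation~\ref{obs:shrinking}) then yields $|t_{\s^j}-t|\leq F^{-j}(2\pi+1)\to 0$. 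The new entry is automatically at least $|s_{j+1}|\geq|s_{j+1}^0|$, so one stays in $X_{\s^0}(\F)$.
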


 The final claim in the theorem follows immediately from the following simple fact. 
  \begin{obs}[Larger entries mean larger potentials]\label{obs:potentials}
      Let $\s^0,\s\in \Z^{\N_0}$ such that $|s_j|\geq |s_j^0|$ for all $j\geq 1$. Then 
       $t_{\s}\geq t_{\s^0}$. In particular, if $\s^0$ is fast, then so is $\s$.
  \end{obs}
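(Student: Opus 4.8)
The plan is to derive both assertions from a single monotonicity property of the orbit recursion that defines $\F$. First I would extend $F$ to all of $\R$ by the same formula $F(x)=e^x-1$, keeping it strictly increasing, and for an address $\tau\in\Z^{\N_0}$ and $t\in\R$ set $t^{\tau}_0(t)\defeq t$, $t^{\tau}_{n+1}(t)\defeq F(t^{\tau}_n(t))-2\pi|\tau_{n+1}|$; then $\F^n(t,\tau)=(t^{\tau}_n(t),\sigma^n(\tau))$ whenever the left side is defined, and in all cases $(t,\tau)\in J(\F)$ exactly when $t^{\tau}_n(t)\geq 0$ for every $n\geq 0$. Because $F$ is increasing, each map $t\mapsto t^{\tau}_n(t)$ is increasing and continuous, so $\{t:(t,\tau)\in J(\F)\}$ is a closed, upward-closed subset of $[0,\infty)$; it equals $[t_{\tau},\infty)$ if $\tau$ is exponentially bounded, and is empty otherwise.

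For the inequality $t_{\s}\geq t_{\s^0}$ I would compare the two recursions at a common potential $t$: an induction on $n$, using that $F$ is increasing for the step and that $|s_{n+1}|\geq|s^0_{n+1}|$, gives $t^{\s}_n(t)\leq t^{\s^0}_n(t)$ for all $t$ and all $n$ (the base case is $t^{\s}_0(t)=t=t^{\s^0}_0(t)$). Hence $\{t:(t,\s)\in J(\F)\}\subseteq\{t:(t,\s^0)\in J(\F)\}$, and comparing the two intervals (with the convention $t_{\s}=\infty$ when $\s$ is not exponentially bounded) yields $t_{\s}\geq t_{\s^0}$.

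For the claim about fast addresses I would first record the recursion for the potential map: unwinding the definitions one checks that $(t,\s)\in J(\F)$ if and only if $t\geq 0$ and $F(t)-2\pi|s_1|\geq t_{\sigma(\s)}$, whence $t_{\s}=F^{-1}\!\left(t_{\sigma(\s)}+2\pi|s_1|\right)$ and $\F(t_{\s},\s)=(t_{\sigma(\s)},\sigma(\s))$; iterating, $\F^n(t_{\s},\s)=(t_{\sigma^n(\s)},\sigma^n(\s))$. Thus an exponentially bounded address $\s$ is fast precisely when $t_{\sigma^n(\s)}\to\infty$. Now assume $\s^0$ is fast and $\s$ exponentially bounded (the latter is automatic in the application to Theorem~\ref{thm:subfans}, where $\s$ is the address of a point of $J(\F)$). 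For each $n$ the shifted addresses satisfy $|(\sigma^n(\s))_j|=|s_{n+j}|\geq|s^0_{n+j}|=|(\sigma^n(\s^0))_j|$ for all $j\geq 1$, so the first part applied to $\sigma^n(\s)$ and $\sigma^n(\s^0)$ gives $t_{\sigma^n(\s)}\geq t_{\sigma^n(\s^0)}$; since $t_{\sigma^n(\s^0)}\to\infty$, also $t_{\sigma^n(\s)}\to\infty$, so $\s$ is fast.

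The step to watch is the last one. One is tempted to prove fastness of $\s$ by comparing the $\F$-orbits of $(t_{\s},\s)$ and $(t_{\s^0},\s^0)$ directly, but larger address entries \emph{raise} the threshold $t_{\s}$ while \emph{lowering} the orbit values $t^{\s}_n(\cdot)$, so that comparison pulls the wrong way. The argument therefore has to pass through the identity $t^{\s}_n(t_{\s})=t_{\sigma^n(\s)}$, re-expressing fastness in terms of the potentials $t_{\sigma^n(\s)}$, to which the monotonicity from the first part applies in the correct direction. Everything else is elementary.
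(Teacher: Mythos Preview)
Your proof is correct and follows essentially the same route as the paper: compare the orbit recursions at a common potential to get $t_{\s}\geq t_{\s^0}$, then apply this to all shifts using the identity $T(\F^n(t_{\s},\s))=t_{\sigma^n(\s)}$ to deduce fastness. Your write-up is more explicit than the paper's (you spell out the endpoint identity and flag the implicit hypothesis that $\s$ be exponentially bounded), but the underlying argument is the same.
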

  \begin{proof}
    By definition of $\F$, 
      $T(\F^n(t_{\s}, \s^0)) \geq T(\F^n( t_{\s} , \s)) \geq 0$
     for all $n$. Hence $t_{\s}\geq t_{\s^0}$, as claimed. In particular, if $\s^0$ is fast, then
     $t_{\sigma^n(\s)} \geq t_{\sigma^n(\s^0)}\to \infty$, and $\s$ is fast also. 
  \end{proof}

 In order to prove the first part of Theorem~\ref{thm:subfans}, we shall require some control over the minimal potentials
   $\ts$, as provided by the following fact \cite[Lemma~7.1]{topescaping}, whose proof we include for the reader's convenience.

\begin{lem}[Bounds for minimal potentials]\label{lem:ts}
  Let $\s \in \Z^{\N_0}$, and define
    \begin{equation} \label{eqn:ts*}
      \ts^* \defeq \sup_{n\geq 1} F^{-n}(2\pi |s_n|). 
   \end{equation}
   Then $\ts^* \leq \ts \leq \ts^*+1$. In particular, $\s$ is exponentially bounded 
    if and only if $\ts^*<\infty$, and fast if and only if
      $t_{\sigma^n(\s)}^*\to\infty$. 
\end{lem}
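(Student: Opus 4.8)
The plan is to reduce the whole statement to the behaviour of the first coordinate along a single orbit of $\F$. Fix $\s = s_0 s_1 s_2\dots$ and $t\geq 0$, and set $g_n \defeq T(\F^n(t,\s))$, so that $g_0 = t$ and $g_{n+1} = F(g_n) - 2\pi|s_{n+1}|$. By the definition of $J(\F)$ we have $(t,\s)\in J(\F)$ exactly when $g_n\geq 0$ for all $n$; and since $F$ is increasing, each $g_n$ is a non-decreasing function of $t$, so $\{t\geq 0 : (t,\s)\in J(\F)\}$ is either empty or a closed half-line $[\ts,\infty)$. Thus the two displayed inequalities are precisely a lower and an upper bound on $\ts$, and I would prove them separately. (One could instead exploit the recursion $F(\ts) = t_{\sigma(\s)} + 2\pi|s_1|$, but the orbit viewpoint handles both bounds uniformly.)

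For the lower bound $\ts\geq\ts^*$, I would take any $t$ with $(t,\s)\in J(\F)$ and show by induction that $g_n\leq F^n(t)$ for all $n$ — this is immediate because passing from $g_n$ to $g_{n+1}$ only subtracts the non-negative quantity $2\pi|s_{n+1}|$ and then applies the increasing map $F$. Combining this with non-negativity, $0\leq g_n = F(g_{n-1}) - 2\pi|s_n|$ forces $F^n(t)\geq F(g_{n-1})\geq 2\pi|s_n|$, i.e.\ $t\geq F^{-n}(2\pi|s_n|)$ for every $n\geq 1$. Taking the supremum over $n$ gives $t\geq\ts^*$, hence $\ts\geq\ts^*$ (trivially so when the half-line is empty, $\ts=\infty$).

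For the upper bound I would assume $\ts^*<\infty$, put $t\defeq \ts^*+1$, and prove by induction the calibrated invariant
\[ g_n \geq F^n(\ts^*) + 1 \qquad (n\geq 0), \]
which gives $g_n>0$ for all $n$, hence $(t,\s)\in J(\F)$ and $\ts\leq\ts^*+1$. The base case is the choice of $t$. For the step, the two inputs are: the defining inequality $\ts^*\geq F^{-(n+1)}(2\pi|s_{n+1}|)$, which yields $F^{n+1}(\ts^*)\geq 2\pi|s_{n+1}|$; and the convexity of $F$ in its multiplicative form $F(y+1)\geq e\,F(y) + F(1)$ (elementary, since $1+F(z)=e^z$). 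Applying $F$ to the inductive hypothesis and then subtracting,
\[ g_{n+1} \;\geq\; e\,F^{n+1}(\ts^*) + F(1) - 2\pi|s_{n+1}| \;\geq\; (e-1)F^{n+1}(\ts^*) + F(1) \;=\; F(1)\bigl(F^{n+1}(\ts^*)+1\bigr), \]
which is $\geq F^{n+1}(\ts^*)+1$ because $F(1)=e-1>1$. This closes the induction.

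Finally, the two inequalities hold in $[0,\infty]$ for every $\s$, so $\s$ is exponentially bounded $\iff \ts<\infty \iff \ts^*<\infty$; applying them to all shifts $\sigma^n(\s)$ gives $t_{\sigma^n(\s)}^*\leq t_{\sigma^n(\s)}\leq t_{\sigma^n(\s)}^*+1$, whence $\s$ is fast $\iff t_{\sigma^n(\s)}^*\to\infty$. I expect the only real obstacle to be getting this upper-bound induction to close: the naive additive slack from \eqref{eqn:Fconvex}, $F(y+1)\geq F(y)+1$, is \emph{not} enough — it only propagates $g_n\geq F(1)$, which cannot absorb the subtracted terms $2\pi|s_{n+1}|$ (these may be huge for large $n$ even while $\ts^*$ stays bounded). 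One genuinely needs the multiplicative gain $F(y+1)\geq e\,F(y)$ together with the numerical fact $e-1>1$, and the invariant must be pinned down to exactly $F^n(\ts^*)+1$: replacing it by $F^n(\ts^*)$ breaks the inductive step, while any larger additive constant breaks the base case. Everything else is routine monotonicity bookkeeping.
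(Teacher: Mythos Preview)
Your argument is correct and follows essentially the same route as the paper: both establish the upper bound via the induction $g_n \geq F^n(\ts^*)+1$, with the paper phrasing the key estimate as $F(t+1)\geq 2F(t)+1$ rather than your equivalent $F(y+1)=eF(y)+F(1)$. The lower bound is likewise the same idea (tracking the orbit of $t$ and comparing with $F^n(t)$), just written out in slightly more detail than the paper's one-line version.
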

\begin{proof}
  The fact that $\ts^* \leq \ts$ follows immediately from the observation that
     \[ T\bigl(\F\bigl( F^{-1}(2\pi|s_n|),\sigma^{n-1}(\s)\bigr)\bigr)=0 \] for $n\geq 1$, and hence 
     $F^{-n}(2\pi |s_n|) \leq \ts$. 
   On the other hand, it is easy to see that $(\ts^*+1,\s)\in J(\F)$, and hence
     $\ts^*+1 \geq \ts$, using the fact that $F(t+1)\geq 2F(t)+1$ for all $t\geq 0$.
\end{proof}

We also use the following elementary fact about contraction under pullbacks of $\F$. 

\begin{obs}[Backwards shrinking]\label{obs:shrinking}
  Let $x,y\in J(\F)$ and $n\in\N$, and suppose that $|s_j(x)|=|s_j(y)|$ for $j=1,\dots,n$ and that $\delta \defeq T(\F^n(y)) - T(\F^n(x)) > 0$. Then 
      \[ T(x) \leq T(y) \leq T(x) + F^{-n}(\delta). \]
\end{obs}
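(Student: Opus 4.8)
The plan is to prove the two inequalities $T(x)\leq T(y)$ and $T(y)\leq T(x)+F^{-n}(\delta)$ separately, both by a straightforward induction on $n$ using the defining formula for $\F$. The crucial observation is that for points sharing the same \emph{moduli} of the first $n$ entries of their external addresses, the map $\F$ acts on the first coordinate in the same way for $n$ steps: namely $T(\F(z)) = F(T(z)) - 2\pi|s_1(z)|$, and the subtracted term depends only on $|s_1(z)|$. Hence, writing $x_k\defeq\F^k(x)$ and $y_k\defeq\F^k(y)$ for $0\leq k\leq n$, and $c_k\defeq 2\pi|s_k(x)| = 2\pi|s_k(y)|$ for $1\leq k\leq n$, we have the recursions $T(x_{k}) = F(T(x_{k-1})) - c_{k}$ and $T(y_{k}) = F(T(y_{k-1})) - c_{k}$ for $1\leq k\leq n$.

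\textbf{First inequality.} To see $T(x)\leq T(y)$, suppose for contradiction that $T(y) < T(x)$, i.e.\ $T(x_0) > T(y_0)$. Since $F$ is strictly increasing, the common recursion above gives $T(x_k) - T(y_k) = F(T(x_{k-1})) - F(T(y_{k-1})) > 0$ for all $1\leq k\leq n$, and in fact this difference is non-decreasing in $k$ because $F(u)-F(v)\geq u-v$ whenever $u\geq v\geq 0$ (a restatement of \eqref{eqn:Fconvex} with $u = v+\delta'$). In particular $T(\F^n(x)) > T(\F^n(y))$, contradicting the hypothesis $\delta = T(\F^n(y)) - T(\F^n(x)) > 0$. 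This proves $T(x)\leq T(y)$.

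\textbf{Second inequality.} For the upper bound, set $\delta_k \defeq T(y_k) - T(x_k)\geq 0$; we have just shown $\delta_0\geq 0$ and, by the argument above, $\delta_k$ is non-decreasing, with $\delta_n = \delta$. I claim $\delta_{k-1} \leq F^{-1}(\delta_k)$ for each $1\leq k\leq n$; iterating this gives $\delta_0\leq F^{-n}(\delta_n) = F^{-n}(\delta)$, which is exactly $T(y)\leq T(x) + F^{-n}(\delta)$. To prove the claim, note $\delta_k = T(y_k) - T(x_k) = F(T(y_{k-1})) - F(T(x_{k-1})) = F(T(x_{k-1}) + \delta_{k-1}) - F(T(x_{k-1}))$. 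By \eqref{eqn:Fconvex}, $F(t + \delta_{k-1}) - F(t) \geq F(\delta_{k-1})$ for all $t\geq 0$, hence $\delta_k\geq F(\delta_{k-1})$, i.e.\ $\delta_{k-1}\leq F^{-1}(\delta_k)$ (using that $F$ is an increasing bijection of $[0,\infty)$). This completes the induction and the proof.

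\textbf{Main obstacle.} There is no real obstacle here; the only point requiring a moment's care is checking that all the intermediate potentials $T(x_k)$ and $T(y_k)$ are non-negative, so that we stay in the domain $[0,\infty)\times\Z^{\N_0}$ of $\F$ and may legitimately apply $F$ and its monotonicity/convexity properties; but this is immediate from $x,y\in J(\F)$, which by definition means $T(\F^k(\,\cdot\,))\geq 0$ for all $k\geq 0$. One should also record explicitly that the hypothesis $|s_j(x)| = |s_j(y)|$ only for $j=1,\dots,n$ is exactly what is needed: the subtracted terms $c_k$ agree for precisely the first $n$ applications of $\F$, which is all the induction uses.
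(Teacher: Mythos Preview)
Your proof is correct and follows exactly the approach the paper has in mind: the paper's own proof is the single sentence ``This is immediate from~\eqref{eqn:Fconvex} and the definition of $\F$,'' and you have simply written out those details via the natural induction on $n$ using the recursion $T(\F(z))=F(T(z))-2\pi|s_1(z)|$ together with the convexity estimate $F(t+\delta')\geq F(t)+F(\delta')$.
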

\begin{proof}
  This is immediate from~\eqref{eqn:Fconvex} and the definition of $\F$. 
\end{proof}

\begin{proof}[Proof of Theorem~\ref{thm:subfans}]
   Since projection to the second coordinate is continuous with respect to the product topology, 
    the set $X_{\s^0}(\F)$ is closed in $J(\F)$. Since $\hat{X}_{\s^0}(\F)$ is clearly connected as a union of intervals all of which have a common
    endpoint,  it follows that $\hat{X}_{\s^0}(\F)$ is a continuum. 
    As $\hat{J}(\F)$ is a Lelek fan, we see that $\hat{X}_{\s^0}(\F)$ is a smooth fan with top $\infty$. (Any nontrivial subcontinuum of a 
    smooth fan is
    either an arc or a smooth fan.) 

  To conclude that $\hat{X}_{\s^0}(\F)$ is a Lelek fan, it remains to show that endpoints are dense in $X_{\s^0}(\F)$. Let 
    $x=(t,\s)\in X_{\s^0}(\F)$, and write $\F^n(x)\eqdef (t_n,\sigma^n(\s))$. We define a sequence $(\s^j)_{j=1}^{\infty}$ of addresses by
     \[ s^j_n \defeq \begin{cases}
                          s_n & \text{if $n\neq j+1$}; \\
                          \left\lceil \frac{F( t_j )}{2\pi} \right\rceil &\text{if $n=j+1$}.\end{cases} \]
     By Definition of $\F$ and $J(\F)$, we have
       \[  F(t_j) = 2\pi |s_{j+1}| + t_{j+1} \geq 2\pi |s_{j+1}|,    \] 
      and hence $|s^j_n|\geq |s_n|\geq |s_n^0|$ for all $j\geq 1$ and $n\geq 0$. So
      $x^j \defeq (t_{\s^j}, \s^j)\in X_{\s^0}(\F)$ for all $j$. Furthermore, by Lemma~\ref{lem:ts} and~\eqref{eqn:Fconvex}, 
	       \begin{align*}     
      t_j \leq  F^{-1}(2\pi |s_{j+1}^{j}|) \leq  t^*_{\sigma^j(\s^j)} & \leq 
                  \max( t_{\sigma^j(\s)}^* , F^{-1}(2\pi|s^j_{j+1}|))  \\ &\leq \max ( t_{\sigma^j(\s)} , t_j +2\pi )
                    = t_j+2\pi.\end{align*}
     By Observation~\ref{obs:shrinking}, Lemma~\ref{lem:ts} and~\eqref{eqn:Fshrinks}, 
      \[ |t - t_{\s^j}| \leq 
        F^{-j}\bigl(| t_j - t_{\sigma^j(\s^j)} |\bigr) \leq F^{-j}(2\pi + 1) \to 0\]
      as $j\to\infty$. Since $\s^j\to\s$ by definition, we see that $x^j \to x$, as desired. 

   As already noted, the final claim of the theorem follows from Observation~\ref{obs:potentials}. 
\end{proof}

\begin{proof}[Proof of Theorem~\ref{thm:Fexplodes}]
  Consider the projection  map  $E(\F)\to \Z^{\N_0}$ to  the second coordinate. This map is 
    continuous, and injective by the definition of  $E(\F)$. 
    Since $\Z^{\N_0}$ is totally separated, it follows  that $E(\F)$ 
    and $\tilde{E}(\F)\subset E(\F)$ are totally separated. 

  On the other hand, for any $\s^0\in \Z^{\N_0}$, let $E( X_{\s^0}(\F))$ denote the set of endpoints of $X_{\s^0}(\F)$. By Theorem~\ref{thm:subfans}, $\hat{X}_{\s^0}(\F)$ is a Lelek  fan, and hence 
     $E(X_{\s^0}(\F))\cup\{\infty\}$ is connected by Proposition~\ref{prop:lelekfan} and the definition of explosion  points.
   Furthermore, by Theorem~\ref{thm:subfans}, $E(X_{\s^0}(\F))\subset \tilde{E}(\F)$ when  $\s$ is  fast, and hence
    \[ \tilde{E}(\F)\cup\infty = \bigcup_{\s^0 \text{ fast}} \left( E(X_{\s^0}(\F)) \cup \{\infty\} \right) \]
     is connected, as desired. 
\end{proof} 

 In order to deduce Theorem~\ref{thm:escapingendpoints}, we shall need a slightly stronger version of Theorem~\ref{thm:Fexplodes}, which applies
   also to the set of points that remain far enough to the right under iteration of $\F$. 

 \begin{thm}[Escaping endpoints in other sub-fans]\label{thm:JQbouquet}
   Let $Q\geq 0$, and define
     \[ J_{\geq Q}(\F) \defeq \{x\in J(\F)\colon T(\F^j(x))\geq Q\text{ for all $j\geq 0$}\}. \]
    Then $J_{\geq Q}(\F)$ is a straight brush (using the embedding from Observation~\ref{obs:embedding}), and hence 
        $J_{\geq Q}(\F)\cup\{\infty\}$ is a Lelek fan. Furthermore:
    \begin{enumerate}[(a)]
       \item If $\s^0\in\Z^{\N_0}$ is such that $(t_{\s^0},\s^0)\in J_{\geq Q}(\F)$, then $X_{\s^0}(\F)\subset J_{\geq Q}(\F)$. \label{item:containssubfans}
       \item The set $\tilde{E}_{\geq Q}(\F) \defeq \tilde{E}(\F)\cap J_{\geq Q}(\F) \subset
                      E(J_{\geq Q}(\F))$ is dense in $J_{\geq Q}(\F)$.
            More precisely, for every $x^0\in J_{\geq Q}(\F)$, there are sequences $(x^{j+})_{j=1}^{\infty}$ and 
            $(x^{j-})_{j=1}^{\infty}$ in $\tilde{E}_{\geq Q}(\F)$ such that $\extaddr(x^{j-}) < \extaddr(x^0) < \extaddr(x^{j+})$ for all $j$
            (with respect to lexicographical order) and such that $x^{j+},x^{j-}\to x^0$.\label{item:denseinJQ}
       \item $\infty$ is an explosion point for $\tilde{E}_{\geq Q}(\F)$.\label{item:JQexplosion}
    \end{enumerate}
 \end{thm}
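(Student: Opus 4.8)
The plan is to prove the assertions in the order \ref{item:containssubfans}, \ref{item:denseinJQ}, the straight-brush property, \ref{item:JQexplosion}: part~\ref{item:denseinJQ} carries essentially all the content (the two-sided approximation clause in the definition of a straight brush is a special case of it), and \ref{item:JQexplosion} then follows from \ref{item:containssubfans}, Theorem~\ref{thm:subfans} and the projection argument of Theorem~\ref{thm:Fexplodes}. First I record the elementary fact that the orbit of an endpoint stays at minimal potentials: $\F^n(\ts,\s)=(t_{\sigma^n(\s)},\sigma^n(\s))$ for all $n$ and all exponentially bounded $\s$ (otherwise one could decrease the potential of $(\ts,\s)$ while remaining in $J(\F)$, contradicting minimality of $\ts$). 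Part~\ref{item:containssubfans} is then immediate: the hypothesis $(t_{\s^0},\s^0)\in J_{\geq Q}(\F)$ says precisely that $t_{\sigma^n(\s^0)}\geq Q$ for all $n$, and if $x=(t,\s)\in X_{\s^0}(\F)$ then $\F^n(x)\in J(\F)$ has address $\sigma^n(\s)$, so $T(\F^n(x))\geq t_{\sigma^n(\s)}\geq t_{\sigma^n(\s^0)}\geq Q$, the middle step being Observation~\ref{obs:potentials}. The easy part of the straight-brush property is equally quick: $J_{\geq Q}(\F)=J(\F)\cap\bigcap_n(T\circ\F^n)^{-1}([Q,\infty))$ is closed in the closed set $J(\F)$; for fixed $\s$ the fibre $\{t\ge0:(t,\s)\in J_{\ge Q}(\F)\}$ is closed and upward closed (the dynamics is increasing in the starting potential), hence of the form $[t^{\ge Q}_\s,\infty)$ with $t^{\ge Q}_\s\ge\max(Q,\ts)$, giving hairiness; and $t^{\ge Q}_\s<\infty$ for every bounded $\s$ (take the potential so large that $F$ outruns the bounded subtractions $2\pi|s_n|$), with bounded addresses dense.

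For part~\ref{item:denseinJQ}, fix once and for all a \emph{template} address $\addm=m_0m_1m_2\dots$ that is fast, exponentially bounded, and satisfies $t^*_{\sigma^l(\addm)}\ge Q$ for every $l\ge0$; e.g.\ $m_k=\max(1,\lceil F^k(Q+1)/(2\pi)\rceil)$ works, by \eqref{eqn:Fconvex} and Lemma~\ref{lem:ts}. Given $x^0=(t,\s)\in J_{\ge Q}(\F)$, put $t_n\defeq T(\F^n(x^0))\ge Q$ (so $t_0=t$) and $N_j\defeq\lceil F(t_j)/(2\pi)\rceil+|s_{j+1}|+1$, and define $\s^{j\pm}$ by $s^{j\pm}_n=s_n$ for $0\le n\le j$, $s^{j\pm}_{j+1}=\pm N_j$, and $s^{j\pm}_{j+1+k}=\pm m_k$ for $k\ge1$; set $x^{j\pm}\defeq(t_{\s^{j\pm}},\s^{j\pm})$. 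One checks: (i) a tail of $\sigma^n(\s^{j\pm})$ is a shift of $\addm$, so $t^*_{\sigma^n(\s^{j\pm})}=t^*_{\sigma^{n-j-1}(\addm)}$ for $n\ge j+1$; hence $\s^{j\pm}$ is fast (so $x^{j\pm}\in\tilde E(\F)$) and $t_{\sigma^n(\s^{j\pm})}\ge Q$ for $n\ge j+1$. (ii) Since $\sigma^j(\s^{j\pm})$ has $\pm N_j$ in position~$1$ and $2\pi N_j>F(t_j)$, we get $T(\F^j(x^{j\pm}))=t_{\sigma^j(\s^{j\pm})}\ge F^{-1}(2\pi N_j)>t_j$; so $t_{\s^{j\pm}}\ge t$ by Observation~\ref{obs:shrinking}, whence $T(\F^n(x^{j\pm}))\ge T(\F^n(x^0))=t_n\ge Q$ for $0\le n\le j$ by monotonicity, and with (i) this gives $x^{j\pm}\in J_{\ge Q}(\F)$. (iii) $\s^{j\pm}$ first differs from $\s$ at position $j+1$, where $-N_j<s_{j+1}<N_j$ as $N_j>|s_{j+1}|$, so $\extaddr(x^{j-})<\extaddr(x^0)<\extaddr(x^{j+})$. (iv) From $2\pi|s_{j+1}|\le F(t_j)$ (as $T(\F^{j+1}(x^0))\ge0$) an elementary estimate gives $2\pi N_j\le F(t_j+3)$, hence $F^{-1}(2\pi N_j)\le t_j+3$; with Lemma~\ref{lem:ts} the displacement $\delta_j\defeq T(\F^j(x^{j\pm}))-T(\F^j(x^0))$ is then bounded by a constant depending only on $Q$, and since $x^{j\pm}$ and $x^0$ share addresses in positions $1,\dots,j$, Observation~\ref{obs:shrinking} and \eqref{eqn:Fshrinks} give $0\le t_{\s^{j\pm}}-t\le F^{-j}(\delta_j)\to0$; as $\s^{j\pm}\to\s$ we conclude $x^{j\pm}\to x^0$. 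Finally, any escaping endpoint lying in $J_{\ge Q}(\F)$ is the bottom of its hair there as well as in $J(\F)$, so $\tilde E_{\ge Q}(\F)\subset E(J_{\ge Q}(\F))$.

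With \ref{item:denseinJQ} in hand, the remaining two-sided approximation required of a straight brush — of the endpoint $(t^{\ge Q}_\s,\s)$, with $t^{\ge Q}_{\extaddr(x^{j\pm})}=t_{\s^{j\pm}}\to t^{\ge Q}_\s$ — is just \ref{item:denseinJQ} applied to $x^0=(t^{\ge Q}_\s,\s)$, using that $x^{j\pm}\in J_{\ge Q}(\F)$ forces $t^{\ge Q}_{\extaddr(x^{j\pm})}=t_{\s^{j\pm}}$. Hence $J_{\ge Q}(\F)$ is a straight brush and $J_{\ge Q}(\F)\cup\{\infty\}$ a Lelek fan. For \ref{item:JQexplosion}: the restriction of $\extaddr$ to $\tilde E_{\ge Q}(\F)\subset\tilde E(\F)$ is continuous and injective and $\Z^{\N_0}$ is totally separated, so $\tilde E_{\ge Q}(\F)$ is totally separated by Observation~\ref{obs:preimageseparation}, as in the proof of Theorem~\ref{thm:Fexplodes}; and for every fast $\s^0$ with $(t_{\s^0},\s^0)\in J_{\ge Q}(\F)$ one has $E(X_{\s^0}(\F))\subset\tilde E(\F)\cap X_{\s^0}(\F)\subset\tilde E_{\ge Q}(\F)$ by \ref{item:containssubfans} and Theorem~\ref{thm:subfans}, with $E(X_{\s^0}(\F))\cup\{\infty\}$ connected by Theorem~\ref{thm:subfans} and Proposition~\ref{prop:lelekfan}, while conversely every $(\ts,\s)\in\tilde E_{\ge Q}(\F)$ lies in $E(X_\s(\F))$ with $\s$ fast and $(\ts,\s)\in J_{\ge Q}(\F)$; thus
\[ \tilde E_{\ge Q}(\F)\cup\{\infty\}=\bigcup_{\substack{\s^0\text{ fast}\\ (t_{\s^0},\s^0)\in J_{\ge Q}(\F)}}\bigl(E(X_{\s^0}(\F))\cup\{\infty\}\bigr) \]
is connected, being a union of connected sets through the common point $\infty$ (the index set being nonempty, e.g.\ $\s^0=\addm$), so $\infty$ is an explosion point for $\tilde E_{\ge Q}(\F)$.

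I expect the only genuine obstacle to be part~\ref{item:denseinJQ}: the approximating points must be \emph{escaping} endpoints (hence have rapidly growing addresses) and lie in $J_{\ge Q}(\F)$, yet converge to an arbitrary $x^0$ whose potential may sit far above the minimal potential of its hair. The construction reconciles these by inserting, at position $j+1$, a single entry of size $\approx F(t_j)/(2\pi)$ — which pins the new minimal potential to within $O(1)$ of $t_j$ at time $j$, hence, through the contraction estimate of Observation~\ref{obs:shrinking}, to within $F^{-j}(O(1))$ of $t$ at time $0$ — and by splicing a fixed fast template in beyond that position, securing simultaneously the escape to infinity and the bound $\ge Q$ at all later times. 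The estimate $\ts\le\ts^*+1$ of Lemma~\ref{lem:ts} is by itself too crude here and must be used in tandem with Observation~\ref{obs:shrinking}.
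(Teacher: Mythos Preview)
Your proof is correct and follows the same overall architecture as the paper's: \ref{item:containssubfans} via Observation~\ref{obs:potentials}, \ref{item:denseinJQ} as the main content, the straight-brush property as a corollary, and \ref{item:JQexplosion} via the union-of-sub-fans argument from Theorem~\ref{thm:Fexplodes}. The one genuine difference is in the construction for~\ref{item:denseinJQ}.

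The paper defines the approximating addresses by setting \emph{every} entry from position $j+1$ onward to $\pm\lceil F(t_{n-1})/(2\pi)\rceil$, i.e.\ the tail is dictated by the orbit of $x^0$ itself. This is elegant but forces a reduction to the case $x^0\notin E(\F)$ (so that $t_n\to\infty$ and the tail is genuinely fast), with a diagonalisation to handle endpoints. Your construction instead inserts a single ``pinning'' entry $\pm N_j$ at position $j+1$ and then splices in a \emph{fixed} fast template $\pm\addm$ beyond that. This buys you a uniform treatment with no case distinction~-- your addresses are fast and in $J_{\geq Q}(\F)$ regardless of whether $x^0$ is an endpoint~-- at the cost of a little extra bookkeeping (verifying that $t^*_{\addm}$ is finite and that $F^{-1}(t^*_{\addm})\leq t_j+3$, which you handle correctly). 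Both approaches lead to the same $O(1)$ bound on $\delta_j$ and hence the same $F^{-j}(O(1))$ convergence via Observation~\ref{obs:shrinking}.
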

 \begin{remark}
   For $Q>0$, the Cantor bouquet $J_{\geq Q}(\F)$ will have some endpoints that are escaping points of $\F$, but 
    are not endpoints of  $J(\F)$. (Indeed, $(Q, (0)^{\infty})$ is an example of such a
    point.) In other words, $\tilde{E}_{\geq Q}(\F) \subsetneq E(J_{\geq Q}(\F))\cap I(\F)$.
  \end{remark}
 \begin{proof}
   Note that $J_{\geq Q}(\F)$ is closed in
     $J(\F)$ by definition, proving the ``compact sections'' condition in Definition~\ref{defn:straightbrush}. ``Hairiness'' is also immediate from the definition.
     Furthermore, let $\s\in\Z^{\N_0}$ be exponentially bounded. Then 
     $(t_{\s} + Q, \s)\in J_{\geq Q}(\F)$, and if $(t,\s)\in J_{\geq Q}(\F)$, clearly $(t',\s)\in J_{\geq Q}(\F)$ for all $t' > t$. 
    This implies the first part of the ``density'' condition for straight brushes. Thus it remains to establish
    the second part of this condition (that for any given hair, there are sequences of hairs approximating it from
    above and below). This 
    follows from~\ref{item:denseinJQ}, which we establish below. So $J_{\geq Q}(\F)$ is indeed a straight brush.

   To prove claim~\ref{item:containssubfans}, 
    suppose that $x^0 = (t_{\s^0} , \s^0)\in J_{\geq Q}(\F)$, and let $x\in X_{\s^0}(\F)$. Then we can apply Observation~\ref{obs:potentials} to
    $\s^0$ and $\s\defeq \extaddr(x)$, and all their iterates. Thus
     \[ T(\F^n(x)) \geq t_{\sigma^n(\s)} \geq t_{\sigma^n(\s^0)} = T(\F^n(x_0)) \geq Q \]
    for all $n$, as claimed. 

  Now let us prove~\ref{item:denseinJQ}, which follows similarly as density of endpoints in the proof of Theorem~\ref{thm:subfans}. More 
    precisely, let $x = (t , \s) \in J_{\geq Q}(\F)$; we may assume without loss of generality that
    $x\notin E(\F)$. (Otherwise, apply the result to a sequence of values with the same address as  $x$  and tending to $x$, and diagonalise.) 
   In particular, 
     \begin{equation}\label{eqn:growthofx} t_n \defeq T(\F^n(x)) \geq F^n(\delta) \end{equation}
    by~\eqref{eqn:Fconvex}, where $\delta \defeq t - t_{\s} > 0$. 

   We define addresses $\s^{j+}$ and $\s^{j-}$ by 
     \[ s^{j\pm}_n \defeq \begin{cases}
                          s_n & \text{if $n\leq j$}; \\
                          \pm \left\lceil \frac{F( t_{n-1})}{2\pi} \right\rceil &\text{if $n\geq j+1$}.\end{cases} \]
    Then each address $s^{j\pm}$ is fast by~\ref{eqn:growthofx}. Furthermore, 
     \[ 2\pi |s^{j\pm}_n| \geq  F( t_{n-1} ) > F( t_{\sigma^{n-1}(\s)} ) \geq 2\pi |s_n| \]
     for all $j\geq 1$ and $n\geq j+1$, and hence
     $\s^{j-} < \s < \s^{j+}$. 

    Finally, 
    \[ t_{\sigma^n(\s^{j\pm})} \geq F^{-1}( 2\pi |s^{j\pm}_{n+1}|) \geq t_n \geq Q \]
     for $n\geq j$, and by Observation~\ref{obs:shrinking} also 
    \[ t_{\sigma^n(\s^{j\pm})} \geq t_n \geq Q \]
     for $n=1,\dots , j-1$. Thus $x^{j\pm} \defeq (t_{\s^{j\pm}} , \s^{j\pm})\in \tilde{E}_{\geq Q}(\F)$. The fact that
     $x^{j\pm} \to x^0$ as $j\to \infty$ follows as in Theorem~\ref{thm:subfans}, and the proof of~\ref{item:denseinJQ} is complete.

    Finally, \ref{item:JQexplosion} follows from~\ref{item:containssubfans} and~\ref{item:denseinJQ}
     precisely as in the proof of Theorem~\ref{thm:Fexplodes} above. 
 \end{proof} 
 
\section{Review of exponential dynamics and proof of Theorem \ref{thm:escapingendpoints}}\label{sec:exponentialgeneral}
  We now review some key results concerning the dynamics of exponential maps. The following theorem
    \cite[Theorems~4.2 and~4.3]{topescaping}
    shows that, for any $a\in\C$ and sufficiently large $Q>0$, the set $J_{\geq Q}(\F)$ considered in the previous section captures the essential features of 
    the escaping dynamics of $f_a$.

  \begin{thm}[{Conjugacy between $\F$ and exponential maps}]\label{thm:boettcher} 
    Let $a\in\C$, and consider the exponential map $f_a\colon\C\to\C; f_a(z)=e^z+a$. If $Q$ is sufficiently large, then there exists 
      a closed forward-invariant set $K\subset J(f_a)$ and a homeomorphism $\gconj\colon J_{\geq Q}(\F) \to K$ with the following properties.
   \begin{enumerate}[(a)]
     \item $\gconj$ is a conjugacy between $\F$ and $f$; i.e. 
         $\gconj \circ \F = f_a\circ \gconj$.
     \item  $\re\gconj(x)\to +\infty$ as $T(x)\to\infty$.\label{item:gasymptotic}
     \item The map $\gconj$ preserves vertical ordering. That is, if $\s^1,\s^2\in\Z^{\N_0}$ are exponentially bounded addresses such that
         $\s^1<\s^2$ with respect to lexicographical ordering, then the curve $t\mapsto \gconj(t,\s^1)$ tends to infinity below the curve
         $\gconj(t,\s^2)$.\label{item:verticalorder}
     \item There is a number $R>0$ with the following property: if $z\in\C$ such that $\re f_a^n(z)\geq R$ for all $n\geq 0$, then 
         $z\in K$. 
   \end{enumerate} 
  \end{thm}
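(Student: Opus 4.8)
This statement is \cite[Theorems~4.2 and~4.3]{topescaping}; here I sketch the approach. The plan is to exhibit $K$ as a union of \emph{dynamic rays}, one for each exponentially bounded external address, built by iterating the contracting inverse branches of $f_a$ in the far-right part of the plane, and then to verify that these rays and their parametrisations fit together into a map with the four listed properties.

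First I would fix $R_0=R_0(|a|)$ so large that, on the half-plane $\mathcal H\defeq\{z\colon\re z\ge R_0\}$, each horizontal band $B_k\defeq\{z\colon|\Ima z-2\pi k|\le\pi\}$ ($k\in\Z$) carries a single-valued inverse branch $L_k(w)\defeq\log(w-a)+2\pi i k$ of $f_a$, with $|L_k'(w)|=1/|w-a|\le 1/(R_0-|a|)<1$ on $\mathcal H$; thus each $L_k$ is a Euclidean contraction, $|\Ima L_k(w)|\le 2\pi|k|+\pi$, and $k<k'$ forces $L_k(w)$ to lie below $L_{k'}(w)$. Given $Q$ large compared to $R_0$, an exponentially bounded $\s$, and $t$ with $(t,\s)\in J_{\ge Q}(\F)$, I write $t_n\defeq T(\F^n(t,\s))\ge Q$ and $w_N\defeq t_N+2\pi i s_N\in\mathcal H$, and set
\[ \gconj(t,\s)\defeq\lim_{N\to\infty}\, L_{s_0}\circ L_{s_1}\circ\dots\circ L_{s_{N-1}}(w_N),\qquad \gconj(\infty)\defeq\infty. \]
This limit exists because $t_{n-1}=F^{-1}(t_n+2\pi|s_n|)$ (from the definition of $\F$) gives $|w_{N-1}-L_{s_{N-1}}(w_N)|=O(1)$ uniformly, so consecutive terms differ by $O\bigl((R_0-|a|)^{-N}\bigr)$. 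By construction $\re\gconj(t,\s)=t+O(1)$ uniformly, which is~(b), and $f_a\bigl(\gconj(t,\s)\bigr)=\gconj\bigl(\F(t,\s)\bigr)$, which is~(a); here restricting to $J_{\ge Q}(\F)$ is exactly what keeps the entire forward orbit of $\gconj(t,\s)$ in $\mathcal H$, where the branches live. For~(c), an induction on the first index at which $\s^1$ and $\s^2$ differ, using that lexicographic order on $\Z^{\N_0}$ matches the vertical order of the bands $B_k$ and that pull-back preserves the vertical order of curves tending to infinity, shows $\gconj(\cdot,\s^1)$ tends to infinity below $\gconj(\cdot,\s^2)$ whenever $\s^1<\s^2$.

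Next I would check that $\gconj$ is a homeomorphism onto $K\defeq\gconj(J_{\ge Q}(\F))$ and that $K$ is closed in $\C$ and forward invariant. Continuity on $J_{\ge Q}(\F)$ follows from uniformity of the defining limit together with local constancy of $s_0$ and $s_1$ in the product topology; continuity at $\infty$ holds because the set of addresses of bounded minimal potential is compact (as observed in the proof that $J(\F)$ is a straight brush), so a sequence tending to the top of $J_{\ge Q}(\F)\cup\{\infty\}$ has $T\to\infty$ and hence, by~(b), image tending to $\infty$. Injectivity follows from~(c) for distinct addresses, and for a single address from $\re\gconj(t,\s)=t+O(1)$ together with the fact that potential differences grow under $F$. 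A continuous injection on the compact Hausdorff space $J_{\ge Q}(\F)\cup\{\infty\}$ is a homeomorphism onto its image, so $\gconj\colon J_{\ge Q}(\F)\to K$ is a homeomorphism, $K$ is closed in $\C$, and $f_a(K)=\gconj\bigl(\F(J_{\ge Q}(\F))\bigr)\subseteq K$ since $\F(J_{\ge Q}(\F))\subseteq J_{\ge Q}(\F)$ by definition.

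The remaining part~(d) is, I expect, the main obstacle. Given $z$ with $\re f_a^n(z)\ge R$ for all $n$ --- where $R$ exceeds $Q$ by the constant implicit in $\re\gconj(t,\s)=t+O(1)$ --- the condition $\re f_a^{n+1}(z)\ge R>0$ forces $\cos\bigl(\Ima f_a^n(z)\bigr)>0$, so $f_a^n(z)$ lies in a unique band, determining the \emph{itinerary} $\s=s_0s_1\dots$ of $z$. Two points are delicate. First, one must show that $\s$ and all its shifts are exponentially bounded: translating ``$\re f_a^n(z)\ge R$'' into an a priori bound on $2\pi|s_n|$ via the recursion $\re f_a^{n+1}(z)\le e^{\re f_a^n(z)}+|a|$ and a Lemma~\ref{lem:ts}-type estimate yields $t^*_{\sigma^n\s}<\infty$ and, with care, a potential $t^0$ with $(t^0,\s)\in J_{\ge Q}(\F)$ and $\re\gconj(t^0,\s)=\re z$. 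Second comes a rigidity argument: since $z$ and $\gconj(t^0,\s)$ have the same itinerary and both orbits stay in $\mathcal H$, for every $N$ both are equal to $L_{s_0}\circ\dots\circ L_{s_{N-1}}$ applied to their respective $N$-th iterates, which lie in a common bounded set, so letting $N\to\infty$ the contracting compositions force $z=\gconj(t^0,\s)\in K$. Aligning the thresholds $Q$ and $R$ and keeping every estimate uniform over all exponentially bounded addresses, so that a single $Q$ suffices, is where the real care lies; the direct-limit construction of the rays and the order-preservation argument, while technical, are routine once the geometry of the bands $B_k$ and branches $L_k$ is in place.
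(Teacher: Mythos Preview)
The paper does not give a proof of this theorem; it is quoted from \cite[Theorems~4.2 and~4.3]{topescaping}, with only the remark that part~(c), while not stated explicitly there, follows from the construction. Your sketch is broadly in the spirit of that reference: dynamic rays are built as limits of pullbacks under the inverse branches $L_k$, the conjugacy relation and the asymptotic $\re\gconj(t,\s) = t + O(1)$ fall out of the construction, and vertical order is inherited from the ordering of the bands $B_k$.

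There is, however, a genuine gap in your treatment of~(d). You assert that the $N$-th iterates $f_a^N(z)$ and $f_a^N\bigl(\gconj(t^0,\s)\bigr)$ ``lie in a common bounded set'', so that the contracting compositions $L_{s_0}\circ\dots\circ L_{s_{N-1}}$ force $z=\gconj(t^0,\s)$. But these iterates need not remain bounded as $N\to\infty$~-- for escaping $z$ they tend to infinity~-- and the Euclidean contraction factor you quote, of order $(R_0-|a|)^{-N}$, does not in general dominate the growth of $|f_a^N(z)-f_a^N(\gconj(t^0,\s))|$: the derivative satisfies $|f_a'(w)|=e^{\re w}$, which is unbounded along the orbit, so two nearby points with the same itinerary can separate at a rate the fixed Euclidean contraction cannot control. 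The rigidity step as you have written it therefore does not go through. In \cite{topescaping} (building on \cite{expescaping}) this is handled differently: one uses the explicit asymptotic formula for the ray to compare $f_a^N(z)$ directly with the model orbit, or equivalently works in a metric (hyperbolic on the tract, or cylindrical $|dz|/|z|$) in which the expansion of $f_a$ is uniform, so that the \emph{relative} distance between two points sharing an itinerary decays. Your instinct that ``the real care lies'' in part~(d) is correct, but the specific mechanism you propose is not the one that works.
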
 
\begin{remark}
  Part~\ref{item:verticalorder} is not stated in \cite[Theorems~4.2 and~4.3]{topescaping}, but follows from the construction, as discussed 
    on p.~1962 of the same paper.
\end{remark}

\begin{cor}[Cantor Bouquets in Julia sets]
    The map $\gconj$ in Theorem~\ref{thm:boettcher} extends to a homeomorphism $\R^2\to \C$ (where we consider
       $J_{\geq Q}(\F) \subset J(\F)$ embedded in $\R^2$ as in Observation~\ref{obs:embedding}). In particular, 
     the set $K=\gconj(J_{\geq Q}(\F))\subset J(f)$ is a Cantor bouquet. 
\end{cor}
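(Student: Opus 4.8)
The plan is to combine Theorem~\ref{thm:boettcher} with the characterisation of Cantor bouquets (Theorem~\ref{thm:cantorbouquets}), or more precisely with the final ``ambient extension'' statement of that theorem. First I would invoke the previous theorem (``The model is a straight brush'', applied to $J_{\geq Q}(\F)$ via Theorem~\ref{thm:JQbouquet}) to record that $J_{\geq Q}(\F)$, embedded in $\R^2$ as in Observation~\ref{obs:embedding}, is a straight brush, and hence in particular a Cantor bouquet in the sense of Definition~\ref{defn:cantorbouquet} (it is ambiently homeomorphic to itself). So both $J_{\geq Q}(\F)$ and the target set $K$ are candidates to be Cantor bouquets, but at this stage we only know the domain is one.

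The key step is to upgrade the conjugacy $\gconj\colon J_{\geq Q}(\F)\to K$ to an ambient homeomorphism. For this I would check that $\gconj$ satisfies the hypothesis of the last sentence of Theorem~\ref{thm:cantorbouquets}: that it preserves the cyclic order of hairs at infinity. This is exactly what part~\ref{item:verticalorder} of Theorem~\ref{thm:boettcher} provides --- $\gconj$ respects the vertical order of the hairs, which by the remark following Theorem~\ref{thm:cantorbouquets} (all curves have real part tending to $+\infty$, using part~\ref{item:gasymptotic}) refines and in particular determines the cyclic order. Strictly speaking, Theorem~\ref{thm:cantorbouquets} is stated for a homeomorphism \emph{between Cantor bouquets}, so to apply it directly one wants to already know $K$ is a Cantor bouquet; the clean way around this is to instead use the first sentence of Theorem~\ref{thm:cantorbouquets}, namely that a closed set satisfying~\ref{item:lelekfan} and~\ref{item:bothsides} is a Cantor bouquet, but that still requires verifying~\ref{item:bothsides} for $K$. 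The most economical route is therefore: observe that $\gconj$, being a homeomorphism carrying the straight brush $J_{\geq Q}(\F)$ onto $K$ and preserving the cyclic order of hairs, is precisely a homeomorphism between ``candidate bouquets'' in the language of the sketch proof of Theorem~\ref{thm:cantorbouquets} (a straight brush is trivially a candidate bouquet, and the candidate-bouquet properties~\ref{item:lelekfan},~\ref{item:bothsides} are preserved by an order-preserving homeomorphism of this kind), and then the argument given there --- passing to the one-sided hairy circle compactifications and applying \cite[Theorem~4.8]{aartsoversteegen} --- shows that $\gconj$ extends to an orientation-preserving homeomorphism $\tilde{\C}\to\tilde{\C}$ of the sphere, hence (removing the point at infinity and its preimage, which must be $\infty$ and $\infty$ since the top maps to the top) to a homeomorphism $\R^2\to\C$ extending $\gconj$.

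Once $\gconj$ extends to a homeomorphism $\phi\colon\R^2\to\C$ with $\phi|_{J_{\geq Q}(\F)}=\gconj$ and $\phi(J_{\geq Q}(\F))=K$, the final assertion is immediate: $K$ is by definition ambiently homeomorphic to a straight brush, so $K$ is a Cantor bouquet. The main obstacle is the bookkeeping around which version of Theorem~\ref{thm:cantorbouquets} to apply and in what order --- we cannot simply quote ``homeomorphism between Cantor bouquets'' before knowing the image is one, so the argument must either verify property~\ref{item:bothsides} for $K$ directly from Theorem~\ref{thm:boettcher} (e.g.\ from part~\ref{item:verticalorder} together with density of hairs of $J_{\geq Q}(\F)$ from both sides, which is Theorem~\ref{thm:JQbouquet}\ref{item:denseinJQ}) or, as indicated above, run the hairy-circle extension argument at the level of candidate bouquets where the Cantor-bouquet conclusion is not needed as an input. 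Everything else is routine.
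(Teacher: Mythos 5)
Your proposal is correct and takes essentially the same approach as the paper: the paper's proof simply observes that Theorem~\ref{thm:JQbouquet} and Theorem~\ref{thm:boettcher}\ref{item:verticalorder} together show $K$ satisfies conditions~\ref{item:lelekfan} and~\ref{item:bothsides} of Theorem~\ref{thm:cantorbouquets} (your ``first alternative''), after which the order-preservation yields the ambient extension of $\gconj$ via the final sentence of that theorem. Your extended discussion of the bookkeeping (whether to verify~\ref{item:bothsides} for $K$ first or to run the hairy-circle argument at the candidate-bouquet level) is a reasonable unpacking of what the paper leaves implicit, and both routes you describe are sound.
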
 
\begin{proof}
  By Theorem~\ref{thm:JQbouquet} and part~\ref{item:verticalorder} of Theorem~\ref{thm:boettcher}, the set $K$ satisfies all the
      requirements of Theorem~\ref{thm:cantorbouquets}, and the claim follows.

 (We remark that, in this case, the compactification discussed in the
    proof of Theorem~\ref{thm:cantorbouquets} is well-known: it is obtained by adding to $\C$ all 
    external addresses at infinity, together with the countable set of ``intermediate external addresses'' 
    (see Definition~\ref{defn:intermediateaddresses} below). This space has the property that 
      $\gconj(t,\s)\to \s$ as $t\to\infty$, for every exponentially bounded address $\s$; compare \cite[Remark on p.357]{nonlanding},
        and also \cite[\S5]{brush} for the same result in a more general setting. 
      The compactification is known to be homeomorphic to the closed unit disc, and 
      hence we can obtain the result directly from~\cite[Theorem~4.8]{aartsoversteegen} using this compactification,
      without requiring the  more general Theorem~\ref{thm:cantorbouquets}.)
\end{proof}

In order to complete the proof of Theorem~\ref{thm:escapingendpoints}, we shall require two further ingredients. The first concerns pullbacks of
  unbounded sets under entire functions.

\begin{lem}[Pullbacks]\label{lem:pullbacks}
  Let $f\colon\C\to\C$ be any non-constant entire function, and let $A\subset\C$ be a set such that $A\cup\{\infty\}$ is connected. Then
    $f^{-1}(A)\cup\{\infty\}$ is connected.
\end{lem}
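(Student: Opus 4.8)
The plan is to argue by contradiction using the characterisation of connectedness via separations into relatively clopen sets, exploiting the fact that $f$ extends continuously to a map $\hat{f}\colon\Ch\to\Ch$ with $\hat{f}(\infty)=\infty$, and that near $\infty$ the function $f$ is a proper map (a branched cover away from finitely many critical values). Suppose $f^{-1}(A)\cup\{\infty\}$ is disconnected; write it as a disjoint union $U\sqcup V$ of nonempty sets, each open and closed in the subspace topology, with $\infty\in V$ say. The key point is that $U$ must then be a bounded subset of $\C$: since $\infty\in V$ and $V$ is open in $f^{-1}(A)\cup\{\infty\}$, there is an open neighbourhood $W$ of $\infty$ in $\Ch$ with $W\cap(f^{-1}(A)\cup\{\infty\})\subset V$, so $U\subset \Ch\setminus W$, which is a compact subset of $\C$; hence $U$ is bounded, and moreover $\overline{U}$ (closure in $\C$) is compact and, since $U$ is closed in $f^{-1}(A)$, we get $\overline{U}\cap f^{-1}(A)=U$, i.e.\ $\overline{U}\setminus U\subset \C\setminus f^{-1}(A)$.

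Now I would push $U$ forward. The set $f(U)$ is a subset of $A$. The boundary $\partial U$ (in $\C$) is disjoint from $f^{-1}(A)$, hence $f(\partial U)\subset \C\setminus A$; in particular $\overline{f(U)}\setminus f(U)\subset f(\partial U)\subset \C\setminus A$ by continuity and compactness of $\overline U$. I claim this forces $f(U)\cup\{\infty\}$ to be clopen in $A\cup\{\infty\}$, and proper and nonempty, contradicting connectedness of $A\cup\{\infty\}$ — \emph{provided} $f(U)$ is not all of $A$. Openness of $f(U)$ in $A$ is the one genuinely analytic input: $f$ is an open map (nonconstant holomorphic), so $f$ maps the open set $\interior U$ (interior in $\C$) onto an open subset of $\C$; one checks $f(U)=f(\interior U)$ up to the set $f(\partial U)$ which lies outside $A$, so $f(U)\cap A = f(\interior U)\cap A$ is open in $A$. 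That $f(U)\cup\{\infty\}$ is closed in $A\cup\{\infty\}$ follows because $\overline{f(U)}$ (closure in $\C$) is compact with $\overline{f(U)}\setminus f(U)\subset\C\setminus A$, so $\overline{f(U)}\cap A = f(U)\cap A$; and $\infty$ is an isolated issue handled by adding it to $V$'s image side.

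The remaining case is $f(U)=A$, and symmetrically one must also handle $f(V\cap\C)\supset$ something; here I would instead argue directly that $V$ itself cannot be separated from $\infty$. The cleanest route: since $U$ is compact and $U\subset f^{-1}(A)$ with $f(U)\subset A$, and $V\supset f^{-1}(A)\setminus U$ together with $\infty$, consider a point $a_0\in A$ and note $f^{-1}(a_0)$ is a nonempty closed discrete subset of $f^{-1}(A)$, which is therefore entirely contained in $U$ or entirely in $V$ only if it is finite — but if $a_0$ is not a critical value, every fibre of a transcendental $f$ is infinite and unbounded, so it cannot lie in the compact set $U$; hence every such fibre meets $V$. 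Thus $f(V\cap\C)\supset A\setminus(\text{critical values})$, a dense subset of $A$, and since $f(V\cap\C)$ is open in $A$ by the same open-mapping argument, together with $\overline{f(V\cap\C)}\cap A\subset f(V\cap\C)\cup f(\partial V)$ and $f(\partial V)\cap A=\emptyset$, we conclude $f(V\cap\C)\cap A$ is clopen and dense in $A$, hence equals $A$; then $f(U)\cap A$ is disjoint from... — this is where I expect the main obstacle.

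\textbf{Main obstacle.} The delicate point is the bookkeeping at $\infty$ and, relatedly, whether $f(U)$ and $f(V\cap\C)$ can \emph{overlap} inside $A$ — they need not be disjoint, so one cannot naively transfer the separation $U\sqcup V$ to a separation of $A$. The right fix is to work with the compactified map $\hat f$ and observe that $\hat f(U)$ and $\hat f(V)$ are compact and $\sigma$-compact respectively, that $\hat f(U)\cap\hat f(V)$ can only happen at finitely many ``boundary'' values all lying outside $A$ (images of the compact set $\overline U\cap\overline V=\partial U$, which misses $f^{-1}(A)$), and then to set $A_1\defeq \hat f(U)\cap(A\cup\{\infty\})$ and $A_2\defeq (A\cup\{\infty\})\setminus A_1$: the content to verify is that $A_1$ is clopen, nonempty, and proper, using openness of $f$, compactness of $\overline U$, and the infinitude of fibres of the transcendental entire function $f$ over non-critical values to guarantee $A_2\neq\emptyset$. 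Everything else is routine point-set topology. Since the lemma is stated for arbitrary non-constant entire $f$ (not necessarily transcendental), the fibre argument needs the alternative, purely topological phrasing for the polynomial case — but there $f$ is proper on all of $\C$, so $f$ is a closed map and $f(U)$ is automatically closed, making that case strictly easier.
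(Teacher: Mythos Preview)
Your core argument in the first two paragraphs is exactly the paper's proof: take a nonempty bounded relatively clopen piece $U\subset f^{-1}(A)$, push it forward, and use the open mapping theorem together with compactness of $\overline{U}$ to see that $f(U)$ is relatively clopen in $A$. But you then manufacture a phantom obstacle by attaching $\infty$ to the wrong side. The set $f(U)$ itself---\emph{without} $\infty$---is already a nonempty proper clopen subset of $A\cup\{\infty\}$: it is bounded (being contained in the compact set $f(\overline{U})$), so $\infty$ lies in its complement, and hence that complement is automatically nonempty. This disconnects $A\cup\{\infty\}$ and you are finished. There is no ``remaining case $f(U)=A$'' (even if $f(U)=A$, the separation $A\sqcup\{\infty\}$ works, since $A=f(U)$ bounded makes $\infty$ isolated), and there is no ``main obstacle'': the images $f(U)$ and $f(V\cap\C)$ are allowed to overlap, because you never needed $f(V)$ at all. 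The entire discussion of infinite fibres, transcendental versus polynomial, and overlapping images is unnecessary.

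One minor technical point: your justification of openness of $f(U)$ in $A$ via ``$f(U)=f(\interior U)$ up to $f(\partial U)$'' breaks down when $f^{-1}(A)$ has empty interior in $\C$, which is the typical situation in the paper (Julia sets, escaping sets, etc.). The clean argument is the one the paper uses implicitly: choose an open set $W\subset\C$ with $W\cap f^{-1}(A)=U$; then for $a=f(x)\in f(U)$ the open set $f(W)\ni a$ satisfies $f(W)\cap A\subset f(U)$, since any $b\in f(W)\cap A$ is $f(w)$ for some $w\in W$, and $f(w)\in A$ forces $w\in W\cap f^{-1}(A)=U$.
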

\begin{proof}
 We prove the contrapositive. Suppose that $A\subset\C$ is such that $f^{-1}(A)\cup\{\infty\}$ is disconnected.  Then there is a nonempty 
    bounded relatively open and closed
    subset $X\subset f^{-1}(A)$. Since $f$ is an open mapping, $f(X)$ is open in $A$. Furthermore, the closure $\overline{X}$ (in $\C$) is compact, 
    and hence $f(X)$ is bounded and relatively closed in $A$ by continuity of $f$. Thus $A\cup\{\infty\}$ is disconnected, as required.
\end{proof} 

The second ingredient concerns escaping endpoints: To prove Theorem~\ref{thm:escapingendpoints}, we need to know that
   the image of an escaping endpoint for $\F$ under the conjugacy $\gconj$ is also an escaping endpoint for the exponential map $f_a$. 
   With the usual definition of rays and endpoints for $f_a$, as introduced in~\cite{expescaping}, this would be immediate. 
   For our Definition~\ref{defn:endpoints}~-- which is easier to state, but less natural~-- this is less obvious, but it was shown in~\cite{frs} that 
   the two notions coincide (compare \cite[Remark~4.4]{frs}). In particular, the following is shown there.

\begin{thm}[Path-connected components of the escaping set]\label{thm:frs}
   Let $a\in\C$ be arbitrary, and suppose that $\gamma\colon [0,1]\to I(f_a)$ is a continuous curve in the escaping set $I(f_a)$. Then for all $R>0$ there
    exists $n\geq 0$ such that $\re( f^n(\gamma(t))) \geq R$ for all $t\in [0,1]$. 
\end{thm}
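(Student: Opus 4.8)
The plan is to reduce the statement to the known description of the path-connected components of $I(f_a)$ as \emph{dynamic rays}. First I would record the elementary fact that escaping orbits of exponential maps have real parts tending to $+\infty$: if $z\in I(f_a)$ then $|f_a^{n+1}(z)| = |\e^{f_a^n(z)}+a|\le \e^{\re f_a^n(z)}+|a|$, so if $\re f_a^n(z)$ failed to tend to $+\infty$ then $|f_a^{n+1}(z)|$ would be bounded along a subsequence, contradicting $z\in I(f_a)$. Hence, for each fixed $t$, $\re f_a^n(\gamma(t))\to+\infty$ as $n\to\infty$; the content of the theorem is that this convergence is \emph{uniform} in $t$.

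Next I would invoke the classification of the escaping set of finite-order entire functions with bounded singular set (\cite{rrrs}; see also~\cite{expescaping} for exponential maps and~\cite[Corollary~4.3]{frs}): every path-connected component of $I(f_a)$ is a dynamic ray $g_{\s}$ — an injective curve parametrised by a ``potential'' $t$, satisfying $f_a(g_{\s}(t)) = g_{\sigma\s}(\vartheta_{\s}(t))$ for an increasing transfer map $\vartheta_{\s}$, and with the standard uniform asymptotics $\re g_{\s'}(t) = t + O(1)$ (the bound holding uniformly once $t$ is large) — or one of countably many explicitly understood ``broken'' curves, which may be handled in the same way. Since $\gamma([0,1])$ is compact and connected, it is contained in a compact subarc $g_{\s}([p,q])$ of a single such ray. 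The endpoint $g_{\s}(p)$ lies in $I(f_a)$, so its potential escapes: writing $\vartheta_{\s}^{\circ n} \defeq \vartheta_{\sigma^{n-1}\s}\circ\cdots\circ\vartheta_{\s}$, we have $\vartheta_{\s}^{\circ n}(p)\to\infty$. Applying $f_a^n$ and using monotonicity of the transfer maps, $f_a^n(g_{\s}([p,q])) = g_{\sigma^n\s}\bigl([\vartheta_{\s}^{\circ n}(p),\vartheta_{\s}^{\circ n}(q)]\bigr)$, so $\re f_a^n(\gamma(t)) \ge \vartheta_{\s}^{\circ n}(p) - O(1)$ for all $t\in[0,1]$, and the right side tends to $+\infty$; this proves the theorem (in fact uniformly for all sufficiently large $n$ at once). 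As an alternative endgame, once $\gamma$ has been iterated into the set $K$ of Theorem~\ref{thm:boettcher} one can transport it by $\gconj^{-1}$ to a compact subinterval of a single hair of $J_{\ge Q}(\F)$, on which $T(\F^{\circ k}(\cdot))$ is increasing and tends to $+\infty$ (its left endpoint being escaping), and then use that $\re\gconj(x)\to+\infty$ as $T(x)\to\infty$.

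The genuine difficulty is concentrated in the classification theorem of the second step (and in the verification in~\cite{frs} that the ``easier'' notion of escaping point of Definition~\ref{defn:endpoints} agrees with the ray-theoretic one): it requires controlling the \emph{transient} part of an escaping orbit, before real parts are large and a usable itinerary — hence the head-start and expansion estimates of~\cite{rrrs} — is available. The key geometric mechanism is that an escaping orbit cannot cross a strip boundary $\{\Ima z=(2k+1)\pi\}$ at a point of large real part $\rho$, since there $f_a$ maps the point to $-\e^{\rho}+a$ and the next iterate lies within $\e^{\re a-\e^{\rho}}$ of the singular value $a$; combined with a covering and connectedness argument over all of $[0,1]$, this pins the curve into a controlled combinatorial regime. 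This is subtle precisely when $a$ itself escapes, which is why the argument must be carried out through the abstract combinatorics of rays rather than by a naive ``collapse to a bounded set'' estimate.
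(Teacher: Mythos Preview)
Your proposal is correct and takes essentially the same approach as the paper: both defer the real work to~\cite{frs}, with the paper simply citing ``Theorem~4.2 given there, applied to the setting of Corollary~4.3 in the same paper'' while you unpack the deduction (classify the path-component containing $\gamma$ as a dynamic ray or a broken ray, then use the standard ray asymptotics and monotonicity of potentials to get uniform escape on the compact subarc). One small remark on the final paragraph: your parenthetical about ``the verification in~\cite{frs} that the `easier' notion of escaping point of Definition~\ref{defn:endpoints} agrees with the ray-theoretic one'' gets the logical order reversed~-- that verification is Corollary~\ref{cor:endpointcharacterisation} in the paper and \emph{uses} the theorem you are proving rather than being an ingredient in it; but this does not affect the soundness of your main argument.
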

\begin{proof}
 While this is not stated explicitly in this form in \cite{frs}, it is a direct consequence of 
   Theorem~4.2 given there, applied to the setting of Corollary~4.3 in the same paper.
\end{proof} 

\begin{cor}[Characterisation of rays and endpoints]\label{cor:endpointcharacterisation}
  Let $a\in\C$ be arbitrary, and let $\gconj$ and $K$ be as in Theorem~\ref{thm:boettcher}. Let $z\in I(f_a)$, and let $n\geq 0$ be sufficiently large
    such that $f_a^n(z)\in K$. Then either
   \begin{enumerate}[(a)]
     \item $\gconj^{-1}(f_a^n(z))\in \tilde{E}(\F)$, and $z$ is an escaping endpoint of $f_a$ in the sense of Definition~\ref{defn:endpoints}; or
     \item $\gconj^{-1}(f_a^n(z))\in I(\F)\setminus\tilde{E}(\F)$, and $z$ is on a hair in the sense of Definition~\ref{defn:endpoints}. 
       \label{item:rays}
    \end{enumerate}
\end{cor}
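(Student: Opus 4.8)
The plan is to exploit the conjugacy $\gconj\colon J_{\geq Q}(\F)\to K$ from Theorem~\ref{thm:boettcher} together with Theorem~\ref{thm:frs}, which guarantees that any continuous curve in $I(f_a)$ is eventually (under iteration) mapped uniformly far to the right, hence eventually into $K$ by part~(d) of Theorem~\ref{thm:boettcher}. Fix $z\in I(f_a)$ and choose $n\geq 0$ large enough that $f_a^n(z)\in K$; set $w\defeq \gconj^{-1}(f_a^n(z))\in J_{\geq Q}(\F)$. Since $\gconj$ is a conjugacy and $z\in I(f_a)$, we have $w\in I(\F)$. The dichotomy in the statement is exactly the dichotomy $w\in\tilde E(\F)$ versus $w\in I(\F)\setminus\tilde E(\F)$, so the content is to show these two cases correspond, respectively, to $z$ being an escaping endpoint and to $z$ being on a hair, in the sense of Definition~\ref{defn:endpoints}.

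First I would treat case~(b), which is the easier half. If $w=(t,\s)\in I(\F)$ with $t>t_{\s}$, then $w$ lies in the interior of the hair $\{(r,\s)\colon r\geq t_{\s}\}$ of $J(\F)$, and the sub-arc $r\mapsto(r,\s)$, $r\in[t_{\s}+Q,t']$ for suitable $t'>t$, maps under $\gconj$ to an arc $\gamma_0$ through $f_a^n(z)$ lying in $I(f_a)$ (as $\gconj$ is a conjugacy and all these points escape). Pulling $\gamma_0$ back $n$ times under appropriate branches of $f_a^{-1}$ — which is possible since $f_a$ is a covering map onto $\C\setminus\{a\}$ and, by shrinking the arc if necessary, we may assume it avoids the orbit of $a$, or more simply we just take a small enough sub-arc around the point and use that $f_a^{n}$ is a local homeomorphism there — yields an arc $\gamma\colon[-1,1]\to I(f_a)$ with $\gamma(0)=z$. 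Hence $z$ is on a hair. Conversely, if $z$ is on a hair, Theorem~\ref{thm:frs} applied to the defining curve $\gamma$ shows $f_a^{n}\circ\gamma$ is a nondegenerate arc through $f_a^n(z)$ inside $K$; transporting it back by $\gconj^{-1}$ gives a nondegenerate arc in $I(\F)$ through $w$, and since every arc in $J(\F)$ through an endpoint $(t_{\s},\s)$ has that endpoint as an endpoint of the arc, $w$ cannot be an endpoint of $J(\F)$, i.e. $w\in I(\F)\setminus\tilde E(\F)$.

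For case~(a), suppose $w=(t_{\s},\s)\in\tilde E(\F)$. We must check $z\in\tilde E(f_a)$, i.e. $z\in E(f_a)\cap I(f_a)$. That $z\in I(f_a)$ is clear. To see $z\in E(f_a)$, observe that by Theorem~\ref{thm:JQbouquet}\ref{item:denseinJQ} there is an arc in $J_{\geq Q}(\F)$ from $\infty$ to $w$, namely the hair through $w$; its image under $\gconj$ is an arc $\gamma\colon[0,1]\to\C$ with $\gamma(0)=f_a^n(z)$ and $\gamma(s)\in I(f_a)$ for $s>0$ — giving, after pulling back by $f_a^{-n}$ as above, an arc witnessing the first half of the endpoint condition in Definition~\ref{defn:endpoints}. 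It remains to verify $z$ is \emph{not} on a hair; but if it were, the previous paragraph (case~(b), converse direction) would force $w\in I(\F)\setminus\tilde E(\F)$, contradicting $w\in\tilde E(\F)$. The two cases are plainly mutually exclusive and exhaust the possibilities for $w\in I(\F)$, so the dichotomy is established. Conversely, if $z$ is an escaping endpoint of $f_a$, then $z$ is not on a hair, so by case~(b) converse $w\notin I(\F)\setminus\tilde E(\F)$, hence $w\in\tilde E(\F)$; this gives the ``and'' in both clauses as biconditionals.

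The main obstacle I anticipate is the pullback step: going from a statement about $f_a^n(z)$ inside $K$ to the corresponding statement about $z$ itself requires care, because $f_a^n$ is not globally injective. The clean way around this is to work locally — choose the relevant arcs through $f_a^n(z)$ short enough that $f_a^n$ restricts to a homeomorphism on a neighbourhood of $z$ onto a neighbourhood of $f_a^n(z)$ (possible since $f_a$ has no critical points, so $f_a^n$ is a local homeomorphism everywhere), and then restrict the arc accordingly; the "arc to $\infty$" in the endpoint condition need only be an arc with $\gamma(0)=z$ and $\gamma(t)\in I(f_a)$ for $t>0$, so a short initial sub-arc suffices and the pullback is unproblematic. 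Beyond this, the argument is essentially bookkeeping with the conjugacy and the topological structure of $J(\F)$ established in Section~\ref{sec:model}.
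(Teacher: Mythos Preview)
Your approach is the same as the paper's. There is, however, a small gap in your forward direction of case~(b): knowing $w=(t,\s)\in I(\F)\setminus\tilde E(\F)$ only tells you $t>t_{\s}$, i.e.\ that $w$ is a non-endpoint of $J(\F)$; it does \emph{not} guarantee that $w$ is a non-endpoint of $J_{\geq Q}(\F)$, which is what you actually need in order to find an arc through $w$ lying in the domain of $\gconj$ (see the Remark following Theorem~\ref{thm:JQbouquet}: $J_{\geq Q}(\F)$ has additional endpoints that are escaping non-endpoints of $J(\F)$). In particular, your proposed interval $[t_{\s}+Q,t']$ need not contain $w$ in its interior, since nothing ensures $t>t_{\s}+Q$. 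The paper handles this by observing that one may increase $n$ further: since $w\in I(\F)\setminus\tilde E(\F)$, some iterate $\F^k(w)$ does lie in the interior of its hair in $J_{\geq Q}(\F)$, and replacing $n$ by $n+k$ one obtains the required arc. With this adjustment your argument and the paper's coincide.
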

\begin{proof}
  First observe that, by property~\ref{item:gasymptotic} of Theorem~\ref{thm:boettcher}, the map $\gconj$ maps (non-)esca\-ping points of $f_a$ to (non-)escaping
     points of $\F$.  Hence either $x \defeq \gconj^{-1}(f_a^n(z)) \in \tilde{E}(\F)$ or $x\in I(\F)\setminus\tilde{E}(\F)$. 
     Observe also that, since the set of escaping endpoints is completely invariant under $\F$, which of the two alternatives holds is independent
     of the choice of $n$. 

   If $x$ is a non-endpoint for $J(\F)$, then~-- by increasing the number $n$ if necessary~--
   we may assume that 
     $x$ is also a non-endpoint for $J_{\geq Q}(\F)$, where $Q$ is as in Theorem~\ref{thm:boettcher}. 
    It is then immediate that $f_a^n(z)$ is on a hair, and by applying a local inverse of $f_a^{-n}$, we obtain an arc in $I(f)$ containing
    $z$ as an interior point, as required. 

  If $x\in \tilde{E}(\F)$, then we see analogously that $z$ is accessible from the escaping set $I(f_a)$. Hence it only remains to prove that, if
    $\gamma\subset I(f_a)$ is an arc, then $\gamma$ cannot contain $x$ in its interior. This follows from
     Theorem~\ref{thm:frs} (again, increasing $n$ if necessary). 
\end{proof} 

\begin{proof}[Proof of Theorem~\ref{thm:escapingendpoints}]
  Let $\gconj$, $K$ and $Q$ be as in Theorem~\ref{thm:boettcher}, and define 
      \[ A\defeq \gconj(\tilde{E}(\F)\cap J_{\geq Q}(\F)) \subset \tilde{E}(f_a) \]
    (where the last inclusion follows from Corollary~\ref{cor:endpointcharacterisation}). 
    Then $A$ is forward-invariant under $f_a$, and  $\infty$ is an explosion point for $A\cup\{\infty\}$ by 
    Theorem~\ref{thm:JQbouquet}. The increasing union 
      \[ \bigcup_{n\geq 0} ( f^{-n}(A) \cup\{\infty\}) \]
     is dense in $\tilde{E}(f_a)\cup\{\infty\}$, since the  backward orbit of any point other than the omitted value
      $a$ is dense in the  Julia set (by Montel's theorem). It is also
     connected by Lemma~\ref{lem:pullbacks}; 
     the theorem follows.
\end{proof}

\begin{rmk}[Fast escaping points]\label{rmk:fast}
  Let $\s\in \Z^{\N_0}$ be an exponentially bounded address such that the entries of $\s$ grow in an iterated exponential fashion;
   e.g.\ $s_0=m$, for some $m\in\N$, and $s_{k+1} \defeq 2^{s_k}$. If $m$ is large enough, then by Theorem~\ref{thm:JQbouquet} 
    we can replace
    $J_{\geq Q}(\F)$ by $X_{\s}(\F)\subset J_{\geq Q}(\F)$ in the proof of Theorem~\ref{thm:escapingendpoints}. Furthermore, in this case all points of 
    $\gconj( X_{\s}(\F))$ belong to the fast escaping set $A(f_a)$ discussed after the statement of Theorem~\ref{thm:escapingendpoints}. 
   This shows that the set of fast escaping endpoints, together with infinity, is also connected.
\end{rmk}

\section{Dense path-connected components}\label{sec:denseray}
  We now prove Theorem~\ref{thm:denseray}, in the following more general form.


\begin{thm}[Accumulation points]\label{thm:accumulation}
  Let $a\in\C$. Let $C$ be a path-connected component of $I(f_a)$, and suppose that 
     $z_0,z_1\in \tilde{E}(f_a) \cup (\C\setminus I(f))$ both belong to the closure $\overline{C}$. 
     Then $z_0$ and $z_1$ are not separated in $\{z_0,z_1\}\cup \tilde{E}(f_a)$. 
\end{thm}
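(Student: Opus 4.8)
The plan is to reduce the statement to the already-established connectivity of a Lelek-fan-type structure by working in the model $\F$ via the conjugacy $\gconj$ of Theorem~\ref{thm:boettcher}. First I would observe that the hypothesis is preserved under applying $f_a$: if $C$ is a path-connected component of $I(f_a)$ with $z_0,z_1\in\overline{C}$, then $f_a(C)$ lies in a path-connected component $C'$ of $I(f_a)$ with $f_a(z_0),f_a(z_1)\in\overline{C'}$, and by Observation~\ref{obs:preimageseparation} it suffices to prove non-separation for $f_a^n(z_0),f_a^n(z_1)$ for some large $n$. Using Theorem~\ref{thm:frs} (applied to an arc in $C$ joining points near $z_0$ and $z_1$, combined with the fact that $C\subset I(f_a)$ accumulates only at points of $\overline{C}$) together with part~(d) of Theorem~\ref{thm:boettcher}, I can arrange that after finitely many iterates the whole relevant picture~-- a dense-in-$C$ collection of rays and their endpoints~-- lies in the Cantor bouquet $K=\gconj(J_{\geq Q}(\F))$, and hence pull everything back to the model. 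So from now on I work with $x_0,x_1\in \tilde{E}(\F)\cup(J(\F)\setminus I(\F))$ lying in the closure of a single path-connected component of $I(\F)$, and I must show $x_0,x_1$ are not separated in $\{x_0,x_1\}\cup\tilde{E}(\F)$.

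Next I would identify the path-connected components of $I(\F)$: by Corollary~\ref{cor:endpointcharacterisation} (or directly from the structure of $J(\F)$ as a straight brush), a path-connected component of $I(\F)$ is, essentially, a single hair $\{(t,\s)\colon t\geq t_\s\}$ with $\s$ fast, possibly with its endpoint removed. Its closure adds back the endpoint $(t_\s,\s)$ and, crucially, accumulation points coming from \emph{nearby hairs in the vertical/lexicographical order}. Concretely, $x_0=(t_\s,\s)$ and $x_1=(t_\s,\s)$ would have to be the \emph{same} endpoint unless the component is not just one hair; the substance of the theorem is that when $z_0\neq z_1$ both lie in $\overline{C}$, the component $C$ must be "spiralling" so that its closure is large, and then the two endpoints can be joined inside $\tilde{E}(\F)$ using the sub-fans $X_{\s^0}(\F)$. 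The key step: since $x_0,x_1\in\overline{C}$ with $C$ a hair of fast address $\addt$, there are parameters $t$ with $(t,\addt)$ arbitrarily close to $x_0$ and others arbitrarily close to $x_1$; by the straight-brush density statement of Theorem~\ref{thm:JQbouquet}\ref{item:denseinJQ} I can find fast addresses $\s^0$ with $(t_{\s^0},\s^0)$ close to $x_0$ and entrywise $\geq$ some common lower bound, and similarly near $x_1$, all within a single sub-fan $X_{\s^0}(\F)$ whose endpoint set is connected (Theorem~\ref{thm:subfans}) and contained in $\tilde{E}(\F)$.

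Then I would assemble: given any closed connected $\Delta\subset\C\setminus(\{x_0,x_1\}\cup\tilde{E}(\F))$ that allegedly separates $x_0$ from $x_1$ (using Lemma~\ref{lem:planeseparation} in the embedded picture), I note $\Delta$ is disjoint from $E(X_{\s^0}(\F))\subset\tilde{E}(\F)$ for every fast $\s^0$. Since $E(X_{\s^0}(\F))\cup\{\infty\}$ is connected and each such set contains points arbitrarily close to $x_0$ (and a matching family contains points arbitrarily close to $x_1$), and since the union over all fast $\s^0$ of these endpoint sets, together with $\infty$, is connected (this is exactly the content of the proof of Theorem~\ref{thm:Fexplodes}), the set $\tilde{E}(\F)\cup\{x_0,x_1\}$ contains a connected set meeting every neighbourhood of both $x_0$ and $x_1$; hence $\Delta$ cannot separate them. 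Translating back through $\gconj$ and Observation~\ref{obs:preimageseparation} gives the claim for $f_a$.

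\textbf{Main obstacle.} The hard part will be the geometric/combinatorial input that when $z_0\neq z_1$ both lie in $\overline{C}$ for a single path-connected component $C$, one genuinely can find the approximating escaping endpoints inside a \emph{common} sub-fan $X_{\s^0}(\F)$ with controlled (large) entries~-- i.e.\ translating "$C$ accumulates at $z_0$ and at $z_1$" into "there are fast addresses $\s$ with $(t_\s,\s)$ near $z_i$ whose entries dominate a fixed fast address". This requires understanding how the address of a point of $C$ near $z_i$ relates to the address of $z_i$ itself, using that $C$ is a hair whose defining address is fast, and that proximity in $\C$ forces proximity of addresses in the lexicographic order (via the vertical order and part~\ref{item:verticalorder} of Theorem~\ref{thm:boettcher}); the bookkeeping to ensure the entrywise domination needed for Observation~\ref{obs:potentials} is the delicate point. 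Everything else is a routine combination of Theorems~\ref{thm:subfans}, \ref{thm:Fexplodes}, \ref{thm:JQbouquet}, \ref{thm:boettcher} and Lemma~\ref{lem:planeseparation}.
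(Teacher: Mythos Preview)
Your reduction to the model has a genuine gap. You write that ``after finitely many iterates the whole relevant picture \ldots\ lies in the Cantor bouquet $K$'', and then propose to work entirely in $J(\F)$. But this cannot be arranged. The points $z_0,z_1$ are allowed to be \emph{non-escaping}, so there is no reason for $f_a^n(z_0)$ or $f_a^n(z_1)$ ever to enter $K$ (which consists only of points whose entire forward orbit stays far to the right). Theorem~\ref{thm:frs} lets you push a fixed compact arc of $C$ into $K$, but not $z_0,z_1$, nor the whole of $C$.

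Even worse, if you \emph{could} arrange $f_a^n(C)\subset K$ and $f_a^n(z_j)\in K$, the statement would become vacuous rather than proved: under the ambient homeomorphism $\gconj^{-1}$, the set $f_a^n(C)$ becomes a straight half-line $\{(t,\alpha(\s)):t\geq t^Q_{\s}\}$ in $\R^2$, which is already closed. So its closure adds only the endpoint, and $f_a^n(z_0)=f_a^n(z_1)$. The interesting accumulation of a ray on points of $\overline{C}$ takes place precisely in the part of the dynamical plane that does \emph{not} correspond to the straight-brush model; transferring to the model erases it.

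The paper's argument avoids this by never moving $z_0,z_1$ at all. It assumes a closed connected $\Delta\subset\C$ separates $z_0$ from $z_1$ while avoiding $\tilde{E}(f_a)$, observes that $\Delta$ must cross the arc $\gamma\subset C$ joining points near $z_0$ and $z_1$ at some $\zeta_0\in I(f_a)$, and then pushes only a \emph{local} piece of $\Delta$ near $\zeta_0$ forward by $f_a^n$ into a neighbourhood of $\gamma_n\subset K$. The contradiction comes from Proposition~\ref{prop:accessing}: a closed connected set not contained in a Cantor bouquet, and disjoint from a dense subset $E$ with $E\cup\{\infty\}$ connected, can only meet the bouquet in components that are unbounded or contain an endpoint of the bouquet --- but the relevant component of $f_a^n(\Delta_0)\cap K$ sits inside the interior of the arc $\gamma_n$ and contains no endpoint. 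So the essential new ingredient you are missing is to work with the separating set $\Delta$ rather than with $z_0,z_1$, and to use Proposition~\ref{prop:accessing} to analyse how $\Delta$ meets the bouquet locally.
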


The idea of the proof is as follows. If $z_1$ and $z_2$ were separated, then by Lemma~\ref{lem:planeseparation}, 
    they could be separated by a closed subset of the plane,
   which by definition does not contain any escaping endpoints. However, this set would have to intersect $C$, which itself essentially sits
   within a Cantor bouquet (obtained as preimages of the set $K$ from Theorem~\ref{thm:boettcher}). We thus obtain a contradiction to 
  Theorem~\ref{thm:Fexplodes}. To make this precise, we notice the following simple fact.

\begin{prop}[Accessing Cantor Bouquets]\label{prop:accessing}
  Let $X\subset\C$ be a Cantor bouquet, and let $E\subset X$ be a dense subset of $X$ such that 
   $E\cup\{\infty\}$ is connected. 

  Suppose that $\Delta\not\subset X$ is a closed connected subset of $\C$ with $\Delta\cap E=\emptyset$. 
   Then every connected component $I$ of $\Delta\cap X$ is either unbounded or contains an endpoint of $X$. 
\end{prop}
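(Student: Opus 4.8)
The plan is to argue by contradiction and contraposition, using the structure of $X$ as a Cantor bouquet together with the fact that $E\cup\{\infty\}$ is connected. So suppose that some connected component $I$ of $\Delta\cap X$ is bounded and contains no endpoint of $X$. Since $X$ is a Cantor bouquet, every point of $X$ lies on a unique hair (a connected component of $X\setminus\{$top$\}$, which after the standard identification is an arc from a finite endpoint to $\infty$); moreover, by the characterisation of Cantor bouquets, a point $x\in X$ that is not an endpoint is accessible from $\C\setminus X$. The first step is to observe that, since $I$ is connected and contained in $X$, it must lie inside a single hair $h$ of $X$: distinct hairs of a fan meet only at the top, and $\infty\notin\Delta$ because $\Delta$ is a closed subset of $\C$. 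Thus $I$ is a bounded connected subset of an arc, hence a sub-arc $[x,y]\subseteq h$ (possibly a single point).

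The key step is then to use $I$ to build a closed connected separating set in the sense of Lemma~\ref{lem:planeseparation}. Since neither endpoint of the sub-arc $[x,y]$ is an endpoint of $X$, both $x$ and $y$ are accessible from $\C\setminus X$; pick accesses and, because $X$ is a Cantor bouquet (and in particular every hair is approached by other hairs from both sides), choose these accesses so that we may adjoin to $\Delta$ two disjoint arcs $\alpha$, $\beta$ in $\C\setminus X$, landing at $x$ and $y$ respectively, and both running off to infinity on the \emph{same} side of the hair $h$. Concretely, one uses the planar (straight-brush) model: $h$ corresponds to a vertical ray $\{(t,\alpha_0):t\ge t_{\alpha_0}\}$, and one can enclose the segment of $h$ above the sub-arc $[x,y]$ — together with nearby hairs — inside a closed connected set $\Delta'\supseteq\Delta$ that still avoids $E$, but which now \emph{separates the plane}: it cuts off a bounded region $U$ containing the ``tip'' of $h$ (the endpoint of the hair through $x$ and $y$) together with the tips of all hairs lying in an open interval of addresses around $\alpha_0$. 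The crucial point is that $\Delta'$ separates two endpoints of $X$ — one whose address lies in this open interval, one whose does not — and both of these can be taken to lie in $E$, since $E$ is dense in $X$ and hence its endpoints are dense among the endpoints of $X$. This contradicts the connectedness of $E\cup\{\infty\}$, via Lemma~\ref{lem:planeseparation} and the fact (Theorem~\ref{thm:Fexplodes}, transported through the Cantor-bouquet structure) that the endpoints together with $\infty$ form a connected set.

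The main obstacle is the middle step: turning the abstract statement ``$x$ and $y$ are accessible from $\C\setminus X$'' into a genuine \emph{planar} enlargement $\Delta'$ of $\Delta$ that (i) still misses the dense set $E$, and (ii) provably disconnects $\C$ into pieces containing endpoints of $X$ on both sides. Accessibility alone gives arcs landing at $x$ and $y$, but one must verify these arcs can be chosen disjoint from all of $E$ and from $\Delta$ except at their landing points, and one must control which hairs get ``trapped'' in the bounded complementary component. This is where the straight-brush coordinates are essential: working in $\R^2$ with the lower-semicontinuous height function $\alpha\mapsto t_\alpha$, one picks irrational addresses $\alpha^-<\alpha_0<\alpha^+$ close to $\alpha_0$ with $t_{\alpha^\pm}$ small, so that the corresponding hairs, together with a horizontal segment and $\Delta$, bound the desired region; the density condition in the definition of a straight brush is exactly what makes such $\alpha^\pm$ available, and the ``compact sections'' condition guarantees the bounding set is closed. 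Once $\Delta'$ is in hand, the conclusion is immediate from Lemma~\ref{lem:planeseparation} and the connectedness of $E\cup\{\infty\}$, giving the required contradiction and hence the proposition.
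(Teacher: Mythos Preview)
Your overall strategy---argue by contradiction, pass to the straight-brush model, and build a planar separating set that misses $E$ but divides two points of $E$---matches the paper. But the execution of the ``key step'' has a real gap.

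First, your proposed separating set is built from hairs of $X$ at nearby irrational addresses $\alpha^{-}<\alpha_0<\alpha^{+}$. Those hairs lie in $X$, and since $E$ is \emph{dense} in $X$, they meet $E$ in a dense set. So the set you construct does not satisfy $\Delta'\cap E=\emptyset$, and you cannot invoke Lemma~\ref{lem:planeseparation} to disconnect $E\cup\{\infty\}$. The paper avoids this by using horizontal segments at \emph{rational} heights $\beta_n^{\pm}$, which are automatically disjoint from the straight brush, together with a vertical segment at negative first coordinate (also outside $X$).

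Second, you try to use all of $\Delta$ (writing $\Delta'\supseteq\Delta$). But $\Delta$ is an arbitrary closed connected set: it may be unbounded, may meet $X$ in many other places, and there is no reason it should, together with two access arcs at the endpoints of $I$, enclose the tip of $h$ in a bounded complementary component. (For instance, $\Delta$ could be a small arc that enters and leaves $X$ on the same side of $h$; adding two accesses on that same side encloses nothing containing $e_h$.) The paper's cure is to extract a \emph{controlled sub-continuum} of $\Delta$ via two applications of the boundary bumping theorem: first, a piece of $\Delta$ inside a thin rectangle about $I$ (with rational horizontal sides) must reach one of those sides; second, a further piece inside an even thinner sub-rectangle must join both horizontal sides. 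That piece $K_2$, together with extensions of the rational horizontal segments to the left and a short vertical join, gives a closed connected $\Gamma\subset\C\setminus E$ that separates two points of $E$ lying in the thin sub-rectangle on either side of $K_2$. Your sketch never isolates such a piece, and without it the separation claim is unsupported.

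A minor point: you invoke Theorem~\ref{thm:Fexplodes} to get connectedness of endpoints, but connectedness of $E\cup\{\infty\}$ is a hypothesis here, so no appeal to that theorem is needed (or available---$E$ is not assumed to be the endpoint set of $X$).
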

\begin{proof}
  Clearly it is enough to prove this in the case where $X\subset \R^2$ is a straight brush. In this case,
    let us prove the contrapositive. Suppose that $I$ is bounded and does not contain an endpoint of $X$; we will show that $E\cup\{\infty\}$ is disconnected.
     Then
     $I=[\tau_1,\tau_2]\times \{\alpha\}$ for some $\alpha\in\R\setminus\Q$ and
      $t_{\alpha} < \tau_1 \leq \tau_2$. Choose $T_1\in (t_\alpha, \tau_1)$
       and $T_2 >\tau_2$ such that $(T_1,\alpha) , (T_2,\alpha) \notin \Delta$.

    Consider the segment $J\defeq [T_1,T_2]\times \{\alpha\}$, and let $\delta>0$ be sufficiently small to ensure that 
      $(T_j, \beta)\notin \Delta$ whenever $j=1,2$ and $|\alpha-\beta|\leq \delta$. 
      Let $\beta_n^+\searrow \alpha$ and $\beta_n^-\nearrow \alpha$ be sequences of rational numbers converging to
        $\alpha$ from above and below such that $|\beta_n^{\sigma}-\alpha|\leq \delta$ for all $n$ and $\sigma\in\{+,-\}$. 
       Then the line segments $J_n^{\pm} \defeq [T_1,T_2] \times \{\beta_n^{\pm}\}$ converge to $J$ from above and below
       and are disjoint from $X$. 
       Consider the open rectangles
       $R_n\subset \R^2$ bounded by $J_n^+$ and $J_n^-$ together with the vertical line segments connecting their endpoints. 

   Fix some point $y\in \Delta\setminus X$; then $\partial R_{n_1}$ separates $y$ from
     $I$ for sufficiently large $n_1$. Let $K_1$ be the closure of the connected component of $K\cap R_{n_1}$ that contains $I$.
     Then, by the boundary bumping theorem~\cite[Theorem~5.6]{continuumtheory}, there is $\sigma\in \{+,-\}$ such that $K_1$ intersects
     $J_{n_1}^{\sigma}$. Now let $n_2> n_1$, and consider the ``sub-rectangle'' $R^{\sigma}_{n_1,n_2}$ of $R_n$ whose horizontal sides are $J_{n_1}^{\sigma}$ and $J_{n_2}^{\sigma}$. 
     By another application of the boundary bumping theorem, there is a compact connected subset $K_2\subset K_1\cap \overline{R_{n_1,n_2}^{\sigma}}$ 
    that connects
     these two horizontal sides. 

   If $n_2$ is sufficiently large, then we can pick points $a_1,a_2\in X\cap R_{n_1,n_2}^{\sigma}$ that are near the two 
      vertical sides; by density of $E$ we can furthermore assume that $a_1,a_2\in E$. Then $K_2$ separates $a_1$ and $a_2$ in
    $R^{\sigma}$.

   Now let $M < 0$, and consider the set  $\Gamma$ obtained as the union of 
       $K_2$, $[M,T_2]\times \{\beta_{n_1}^{\sigma}\}$, $[M,T_2]\times \{\beta_{n_2}^{\sigma}\}$ and
      the vertical interval connecting the left endpoints of the latter two segments. The two horizontal intervals in $\Gamma$ and its left vertical side
      are disjoint from $X$, while $K_2\subset K$ is disjoint from $E$ by assumption. So 
      $\Gamma\cap E = \emptyset$. Moreover, clearly $\Gamma$ also separates $a_1$ from $a_2$ in $\C$ (this follows, for example,
     from Janiszewski's theorem \cite[p.~31]{pommerenkeunivalent}). 
     This proves that $E$ is disconnected, as required.
\end{proof}

\begin{proof}[Proof of Theorem~\ref{thm:accumulation}]
 Let us suppose, by contradiction, that $z_0$ and $z_1$ are separated in $\{z_0,z_1\}\cup \tilde{E}(f_a)$. Then there exists 
    a closed connected set $\Delta\subset \C$ such that $z_0$ and $z_1$ belong to different complementary components of $\C\setminus \Delta$
     and $\Delta\cap \tilde{E}(f_a)=\emptyset$. 
    Let $\delta$ be sufficiently small that $\overline{B(z_j,\delta)}\cap \Delta=\emptyset$ for $j=0,1$. By assumption, there exists a continuous curve
     $\gamma\colon [0,1]\to I(f_a)$ with $|\gamma(0) - z_0| < \delta$ and $|\gamma(1)-z_1| < \delta$. It follows that $\gamma([0,1])\cap \Delta\neq \emptyset$; let
    $\zeta_0$ be a member of this intersection.

 By Theorem~\ref{thm:frs}, there exists a number $n$ such that $\gamma_n\defeq f^n(\gamma([0,1]))\subset K$, where $K$ is the Cantor Bouquet from
    Theorem~\ref{thm:boettcher}. In particular, $\gamma_n$ is an arc. If $V_n$ is a sufficiently small neighbourhood of
    $\gamma_n$, then we can find a branch $\phi_n$ of $f_a^{-n}$ on $V_n$ that takes $\gamma_n$ to $\gamma$ and extends
    continuously to $\partial V_n$. Set $V\defeq \phi_n(V_n)$.  

 Let $\Delta_0$ be the connected component of $\Delta\cap \overline{V}$ containing $\zeta_0$; by the boundary bumping theorem, 
    we have $\Delta_0\cap \partial V\neq\emptyset$. Now we can apply Proposition~\ref{prop:accessing} to
    the Cantor Bouquet $K$, the set $E \defeq \tilde{E}(f)\cap K$ (which has the desired properties by Theorem~\ref{thm:JQbouquet}), and 
    the continuum $\Delta_n \defeq f^n(\Delta_0)$. Note that $\Delta_n\not\subset K$ since $\Delta_1\not\subset\gamma_n$. 

 By construction, $\Delta_n\cap E = \emptyset$, but the connected component of
    $\Delta_n\cap X$ containing $f^n(z_0)$ belongs to the interior of the arc $\gamma_n$, and hence is bounded and contains no endpoint of $K$.
    This is a contradiction.
\end{proof}

\section{Maps with accessible singular values}\label{sec:accessible}

  The key point about the hypotheses in Theorem~\ref{thm:escapingendpointsexplode} is that the singular value $a$ can be connected to
    $\infty$, by a curve $\gamma$ that either belongs to the Fatou set (if $a\in F(f_a)$), or is a piece of a ``dynamic ray'' in the escaping set (see below). 
    The preimage of $\gamma$, which contains no endpoints,  
     then cuts the complex plane into countably many strips, and we can study symbolic dynamics by considering the 
    ``itineraries'' of different points with respect to this partition. 

 \begin{defn}[Itineraries]\label{defn:Gammaitineraries}
    Let $f\colon \C\to\C$ be a continuous function, and let $\Gamma\subset\C$ be a closed set. 
       We say that $z\in\C$ \emph{has an itinerary} with respect to $\Gamma$ if $f^n(z)\notin \Gamma$ for all $n\geq 0$. 
        In this case, the itinerary of $z$ is defined to be the sequence of connected components of $\C\setminus\Gamma$ visited by $f^n(z)$, for $n\geq 0$.

    In particular, two such points $z$ and $w$ have the same itinerary if $f^n(z)$ 
        and $f^n(w)$ belong to a common connected component of $\C\setminus\Gamma$ for all $n$; otherwise,
        they have different itineraries. 
 \end{defn}

When two points have different itineraries, they can clearly be separated within the 
    set of endpoints:

  \begin{obs}[Separation from itineraries]\label{obs:itineraryseparation}
    Let $f\colon \C\to\C$ be a continuous function, and let $\Gamma\subset\C$ be a closed set. 
     Suppose that $z,w\in\C$ have different itineraries with respect to $\Gamma$.
    Then $z$ and $w$ are separated in 
       $\displaystyle{\C\setminus \bigcup_{n\geq 0}f^{-n}(\Gamma)}$.
  \end{obs}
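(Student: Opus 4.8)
The plan is to make this a direct topological observation building on the constructions already in place. The setup is: $f\colon\C\to\C$ continuous, $\Gamma\subset\C$ closed, and $z,w$ have itineraries with respect to $\Gamma$ that differ. The set in question is $Y\defeq \C\setminus\bigcup_{n\geq 0}f^{-n}(\Gamma)$, which is precisely the set of points having \emph{some} itinerary. We want to produce an open and closed subset of $Y$ containing $z$ but not $w$.

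First I would record that $Y$ is an open subset of $\C$: it is the complement of $\bigcup_{n\geq 0}f^{-n}(\Gamma)$, and each $f^{-n}(\Gamma)$ is closed (by continuity of $f^n$ and closedness of $\Gamma$), but a \emph{countable} union of closed sets need not be closed — so instead I would simply note that $Y$ is exactly the set on which all iterates avoid $\Gamma$, and then pass to the itinerary map. Let $n\geq 0$ be the first index at which the itineraries of $z$ and $w$ differ, so $f^n(z)$ and $f^n(w)$ lie in distinct connected components of $\C\setminus\Gamma$. Call these components $U$ and $V$ respectively; since $\C\setminus\Gamma$ is open, $U$ is open in $\C$, and it is also closed in $\C\setminus\Gamma$ (connected components of a locally connected space — or just of an open subset of the plane — are open, hence also closed in that subspace). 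Now set
\[
W \defeq \{y\in Y\colon f^n(y)\in U\} = Y\cap (f^n)^{-1}(U).
\]

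Then $W$ is open in $Y$ because $U$ is open in $\C$ and $f^n$ is continuous. For closedness in $Y$: the complement of $W$ in $Y$ is $Y\cap (f^n)^{-1}((\C\setminus\Gamma)\setminus U)$, using the fact that every point of $Y$ has $f^n(y)\in\C\setminus\Gamma$; and $(\C\setminus\Gamma)\setminus U$ is open in $\C\setminus\Gamma$ (it is the union of all the other components), hence of the form $O\cap(\C\setminus\Gamma)$ for some open $O\subset\C$, so $(f^n)^{-1}((\C\setminus\Gamma)\setminus U)$ is relatively open in $Y$ as well. Thus $W$ is open and closed in $Y$, with $z\in W$ and $w\notin W$ (since $f^n(w)\in V\subset(\C\setminus\Gamma)\setminus U$). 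This exhibits the separation, completing the proof.

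Honestly, I expect no serious obstacle here; the only point requiring a moment's care is the bookkeeping that $U$ is simultaneously open in $\C$ and clopen in $\C\setminus\Gamma$, so that both $W$ and its complement in $Y$ are relatively open — which is exactly the mechanism behind Observation~\ref{obs:preimageseparation} applied to the continuous map $f^n|_Y\colon Y\to\C\setminus\Gamma$ together with the set $\Z^{\N_0}$-style fact that distinct components of an open planar set are separated in that set. Indeed, the cleanest write-up simply invokes Observation~\ref{obs:preimageseparation}: the two components $U,V$ of the open set $\C\setminus\Gamma$ are separated there (each component being clopen in the subspace), and $f^n$ restricts to a continuous map $Y\to\C\setminus\Gamma$ sending $z\mapsto$ (a point of $U$) and $w\mapsto$ (a point of $V$), so $z$ and $w$ are separated in $Y$.
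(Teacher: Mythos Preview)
Your proof is correct and follows essentially the same approach as the paper: pick an index $n_0$ where the itineraries differ, observe that distinct components of the open set $\C\setminus\Gamma$ are separated there, and pull this back via Observation~\ref{obs:preimageseparation}. The only cosmetic difference is that the paper applies Observation~\ref{obs:preimageseparation} to $f^{n_0}\colon \C\setminus f^{-n_0}(\Gamma)\to \C\setminus\Gamma$ (so $z,w$ are separated already in the larger set $\C\setminus f^{-n_0}(\Gamma)\supset Y$), whereas you restrict to $Y$ throughout; both are fine.
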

\begin{proof}
    Let $n_0\geq 0$ be such that $f^{n_0}(z)$ and $f^{n_0}(w)$ belong to different connected components of $\C\setminus\Gamma$. Since
    $\Gamma$ is closed, the two points are also separated in $\C\setminus \Gamma$. Then
    $z$ and $w$ are separated in $\C\setminus f^{-n_0}(\Gamma)$
   by Observation~\ref{obs:preimageseparation}.
 \end{proof}

 \subsection*{Dynamic rays of exponential maps}
   A full description of the escaping set of an arbitrary exponential map in terms of
    \emph{dynamic rays} was first given in \cite{expescaping}. 
    We shall now review their  definition and basic properties. 

   \begin{defn}[Dynamic rays]
     Let $f_a$ be an exponential map. A \emph{dynamic ray} of $f_a$ is a maximal injective continuous curve
       $g\colon (0,\infty)\to I(f_a)$ such that 
        \begin{enumerate}
             \item $\lim_{t\to\infty} \re f_a^n(g(t)) = \infty$
      uniformly in $n$, and
              \item for all $t_0>0$, $\lim_{n\to\infty} \re f_a^n(g(t)) = \infty$ uniformly for $t\geq t_0$. 
     \end{enumerate}
     If additionally $z_0 \defeq \lim_{t\to 0}g(t)$ is defined, then we say that $g$ \emph{lands} at $z_0$. 
   \end{defn}

 We shall use the following basic properties of dynamic rays. 

 \begin{thm}[Properties of dynamic rays]\label{thm:rays}
    Let $f_a$ be an exponential map, and let $\gconj$ be the map from Theorem~\ref{thm:boettcher}. 
      \begin{enumerate}[(a)]
        \item For every exponentially bounded external address $\s\in \Z^{\N_0}$, there is a unique (up to reparameterisation) dynamic
           ray $g_{\s}$ such that $\extaddr(\gconj^{-1}(g_{\s}(t)))=\s$ for all sufficiently large $t$. Conversely, if $g$ is a dynamic ray of $f_a$, then
           $g=g_{\s}$ for some $\s$ (again up to reparameterisation).
         \item The vertical order of dynamic rays (for $t\to\infty$) coincides with the lexicographical ordering of their external addresses.
       \item If $z\in I(f)$ is on a hair, then either $z$ is on a unique dynamic ray, or there is $n_0\geq 1$ such that
                  $f^n(z)$ is on a unique dynamic ray containing the singular value $a$.
      \item If $z$ is an endpoint of $f_a$, then either $z$ is the landing point of a dynamic ray (which is unique if additionally
                   $z\in I(f)$), or there is $n_0\geq 1$ such that 
            $f^{n_0}(z)$ is the landing point of the unique dynamic ray containing the singular value $a$.\label{item:thm:rays:endpoints}
      \end{enumerate}
 \end{thm}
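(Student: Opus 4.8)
The plan is to reduce all four statements to the conjugacy of Theorem~\ref{thm:boettcher}, the classification of the path-connected components of $I(f_a)$ from \cite{frs}, and the original construction of dynamic rays in \cite{expescaping}. For part~(a) I would first work inside the model: for an exponentially bounded address $\s$, the convexity inequality~\eqref{eqn:Fconvex} shows that every point $(t,\s)$ with $t>\ts$ lies in $I(\F)$ and that $T(\F^n(t,\s))\to\infty$ uniformly on $[\ts+\varepsilon,\infty)$; for $t$ large enough the entire forward orbit of $(t,\s)$ stays in $J_{\geq Q}(\F)$, so $\gconj$ carries a terminal piece of the hair $\{(t,\s)\colon t>\ts\}$ into $K\subset I(f_a)$, and the remaining initial piece is recovered by pulling back along branches of $f_a^{-n}$. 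Using property~\ref{item:gasymptotic} of Theorem~\ref{thm:boettcher} together with this uniformity, one checks that the resulting curve $g_{\s}$ satisfies the two defining conditions of a dynamic ray. Conversely, if $g$ is any dynamic ray, the uniform escape condition together with part~(d) of Theorem~\ref{thm:boettcher} (or Theorem~\ref{thm:frs}) places a terminal piece of $g$ inside $K$; since $K$ is a Cantor bouquet, this arc lies in a single hair of $K$, which pins down an address $\s$ with $g=g_{\s}$ up to reparameterisation. Uniqueness follows because two dynamic rays with the same address coincide for large $t$ and hence, by unique continuation of the relevant branches of $f_a^{-n}$ along a curve, everywhere.

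Part~(b) is then immediate: for large $t$ one has $g_{\s}(t)=\gconj(t,\s)$ up to reparameterisation, and property~\ref{item:verticalorder} of Theorem~\ref{thm:boettcher} identifies the lexicographic order of addresses with the vertical order of the curves $t\mapsto\gconj(t,\s)$. For part~(c), suppose $z\in I(f_a)$ lies in the interior of an arc $\gamma\subset I(f_a)$. By Theorem~\ref{thm:frs} there is $n$ with $f_a^n(\gamma)\subset K$; since an arc contained in a Cantor bouquet and missing the top lies inside one hair, $f_a^n(z)$ is a non-endpoint of that hair and hence lies on a dynamic ray by~(a). I would then lift this ray successively along the finite backward orbit $f_a^{n-1}(z),\dots,z$: since $f_a$ has no critical points, a branch of $f_a^{-1}$ continues along $g_{\s'}((0,\infty))$ as long as $a\notin g_{\s'}((0,\infty))$ and yields a dynamic ray through the corresponding preimage point. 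Either the lifting succeeds all the way, so that $z$ lies on a (unique, by~(a)) dynamic ray, or it first fails at some iterate $f_a^{n_0}(z)$ lying on a dynamic ray that contains $a$; as distinct dynamic rays are disjoint, this is the unique dynamic ray through $a$, giving the second alternative. This is essentially a reformulation of \cite[Corollary~4.3]{frs}, and part~(d) follows by the same lifting argument applied to landing points instead of interior points, using Corollary~\ref{cor:endpointcharacterisation} to see that $\gconj^{-1}(f_a^n(z))\in\tilde{E}(\F)$ (hence $f_a^n(z)$ is the endpoint of a hair of $K$, and so the landing point of a dynamic ray) when $z$ escapes, and arguing directly from the defining arc together with Theorem~\ref{thm:frs} applied on $[\varepsilon,1]$ and letting $\varepsilon\to0$ when it does not.

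I expect the main obstacle to be the careful handling of the singular value $a$ in parts~(c) and~(d). One must distinguish the case in which the backward orbit of $z$ meets a ray \emph{through} $a$ from the case in which it merely lands at $a$: since $f_a^{-1}(a)=\emptyset$, the latter produces non-landing preimage rays rather than an obstruction to the lifting, and this distinction is precisely what separates the two alternatives in~(d). A second, more routine but still delicate, point is verifying that a lift of a dynamic ray under $f_a^{-1}$ is again a dynamic ray in the exact uniform sense of the definition, and — for the landing-point statement in~(d) — that the terminal potential of an address in $J_{\geq Q}(\F)$ coincides with $\ts$ exactly for the escaping endpoints, which is where the definition of $\tilde{E}(\F)$ and Corollary~\ref{cor:endpointcharacterisation} enter.
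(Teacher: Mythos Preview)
Your proposal is correct and follows precisely the route the paper indicates: the paper's own proof is a single sentence pointing to \cite{expescaping}, \cite{topescaping}, Theorem~\ref{thm:boettcher} and Corollary~\ref{cor:endpointcharacterisation}, and you have simply unpacked these references into a detailed argument using exactly the intended ingredients (including Theorem~\ref{thm:frs}, on which Corollary~\ref{cor:endpointcharacterisation} already relies). Your treatment of the lifting obstruction at the singular value in parts~(c) and~(d) is the standard one and matches what is implicit in \cite[Corollary~4.3]{frs}.
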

 \begin{proof}
  These are well-known properties of dynamic rays (compare~\cite{expescaping} or \cite{topescaping}), and follow easily from
     Theorem~\ref{thm:boettcher} and Corollary~\ref{cor:endpointcharacterisation}.
 \end{proof}

 \subsection*{Curves at the singular value}
  We now study exponential functions satisfying the assumptions of Theorem~\ref{thm:escapingendpointsexplode}.
    For every such map $f_a$, we can find a natural curve connecting the singular value to $\infty$,
    whose preimages give rise to a natural dynamical partition of the Julia set. The study of these partitions, and of the resulting
     itineraries, is well-developed. Here we shall present the basic facts we require and refer e.g.\ to \cite{expper,expcombinatorics,nonlanding}
     for further background. 
 
   Let us first consider the case where 
     $a\notin F(f_a)$. Then Theorem~\ref{thm:rays} immediately implies the following.
 \begin{cor}[Rays landing at singular values]\label{cor:accessible}
    Suppose that $f_a$ is an exponential map for which $a$ is either an endpoint or on a hair. Then there is 
      an exponentially bounded address $\s$ such that either $g_{\s}$ lands at $a$, or such that $a\in g_{\s}$. 
 \end{cor}

  Now consider the case where the singular value belongs to the Fatou set. The following is well-known, and follows from
   the fact that $a$ is the unique singular value of $f_a$, and that exponential maps have no wandering domains.
  \begin{obs}[Singular values in the Fatou set]
     Let $f_a$ be an exponential map. Then $a\in F(f_a)$ if and only if $f_a$ has an attracting or parabolic cycle. In this case, there is a unique cycle
      of periodic components of the Fatou set, and $a$ belongs to one of the components of this cycle.
  \end{obs}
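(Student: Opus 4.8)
The plan is to establish the two implications separately, relying only on standard facts about the exponential family: that $f_a$ lies in the Speiser class $\mathcal{S}$, with the single omitted asymptotic value $S(f_a)=\{a\}$ and no critical points; that consequently $f_a$ has no wandering domains (Eremenko--Lyubich, Goldberg--Keen); the classification of periodic Fatou components of an entire function into attracting basins, parabolic basins, Baker domains and Siegel discs (entire functions having no Herman rings); and the classical principle of Fatou that each cycle of attracting basins, parabolic basins or Siegel discs of an entire function ``consumes'' a singular value --- for an attracting or parabolic cycle the forward orbit of some singular value converges to it, while for a Siegel-disc cycle the boundaries of the discs in the cycle lie in the closure of the postsingular set.

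For the implication from right to left, suppose $f_a$ has an attracting or parabolic cycle. Its basin is a non-empty open subset of $F(f_a)$, and since $a$ is the only singular value, the orbit of $a$ converges to this cycle (through the attracting petals in the parabolic case); in particular $a\in F(f_a)$. The refinement that $a$ itself lies in one of the cyclic components --- the second assertion of the Observation --- is standard in the exponential literature; for an attracting cycle it also follows quickly from the fact that $f_a$ is a covering map onto $\C\setminus\{a\}$: were $a$ to avoid all $p$ cyclic components $V_0,\dots,V_{p-1}$, then $f_a^p$ would restrict to a holomorphic covering of the hyperbolic domain $V_0$ onto itself, hence a local isometry of its hyperbolic metric, contradicting the attracting periodic point it contains.

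For the converse, assume $a\in F(f_a)$ and let $U$ be the Fatou component containing $a$. By the absence of wandering domains, $U$ is iterated into a cycle $\mathcal{V}=(V_0,\dots,V_{p-1})$ of periodic Fatou components, with $f_a^k(a)\in V_0$ for some $k\ge 0$. By the classification, $\mathcal{V}$ is attracting, parabolic, a Baker domain, or a Siegel-disc cycle, and it remains to exclude the last two. A Baker domain cannot occur, since it would be an escaping Fatou component while $f_a\in\mathcal{B}$ forces $I(f_a)\subseteq J(f_a)$ by Eremenko--Lyubich. A Siegel-disc cycle is excluded by the step I expect to be the crux: the tail of the orbit of $a$ lies in $V_0\cup\dots\cup V_{p-1}$, and since $f_a^p$ acts on each $V_i$ as an irrational rotation, the closure of this tail is a finite union of invariant circles (or points), each a compact subset of the corresponding open disc; hence the closure of the whole postsingular set $\{f_a^n(a):n\ge 0\}$ is contained in $F(f_a)$, contradicting Fatou's theorem (in the form valid for entire functions) that $\partial V_0$ --- a non-empty subset of $J(f_a)$ --- lies in that closure. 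So $\mathcal{V}$ is attracting or parabolic.

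Uniqueness follows by the same bookkeeping: every Fatou component is eventually periodic, so $F(f_a)$ is the grand orbit of finitely many cycles of periodic components, each of which --- being attracting, parabolic or Siegel, as Baker domains have been ruled out --- consumes a singular value; since $f_a$ has exactly one, there is exactly one such cycle, and $a$, lying in $F(f_a)$, is iterated into it and in fact (by the remark in the second paragraph) lies in one of its cyclic components. The anticipated obstacle is not the mathematical content, which is entirely classical, but locating the precise references: the transcendental-entire version of Fatou's rotation-domain theorem used to exclude Siegel discs, and the treatment of the parabolic case of the ``$a$ lies in a cyclic component'' assertion, for which the covering-map argument above does not directly apply.
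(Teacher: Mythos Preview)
The paper does not actually prove this Observation: it merely states it as ``well-known'' and hints that it ``follows from the fact that $a$ is the unique singular value of $f_a$, and that exponential maps have no wandering domains.'' Your proposal is a correct and detailed unpacking of precisely this hint, invoking exactly the classical ingredients one would expect (absence of wandering domains for $\mathcal{S}$, classification of periodic components, $I(f)\subset J(f)$ for class $\mathcal{B}$ to rule out Baker domains, and the postsingular-set criterion for rotation domains to rule out Siegel discs). There is nothing to compare; you have supplied the argument the paper omits, and the self-identified ``obstacle'' at the end is purely bibliographic rather than mathematical.
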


   If the component $U$ of $F(f_a)$ containing the singular value has period $n$, 
     we can connect $a$ and $f_a^n(a)$ by an arc $\gamma_0$ in $U$. Let $\gamma$ be the component of
     $f_a^{-n}(\gamma_0)$ containing $a$; then $\gamma$ is a curve connecting $a$ to $\infty$ in $U$. In order
     to associate symbolic dynamics to the curve $\gamma$, we should ``fill in the gaps'' within 
     $\Z^{\N_0}$ with finite sequences called ``intermediate external addresses''; compare
       \cite[\S3]{expattracting} or 
      \cite[\S2]{expcombinatorics}. 

  \begin{defn}[Intermediate external addresses]\label{defn:intermediateaddresses}
     An \emph{intermediate external address} (of length $n\geq 1$) is a finite sequence 
              \[ \s = s_0 s_1 s_2 \dots s_{n-2} \infty, \]
     where $s_j\in \Z$ for $j<n-2$ and $s_{n-2}\in \Z+1/2$. The union of $\Z^{\N_0}$ (the space of infinite external addresses) and the
     set of all intermediate external addresses is denoted $\Sequb$; we also write $\Sequ \defeq \Sequb\setminus \{\infty\}$. 

    Let $f_a$ be an exponential map and let $\gamma\subset \C\setminus I(f_a)$ be a curve connecting some finite endpoint to infinity. Then we say that 
     $\extaddr(\gamma)=\s\in\Sequb$ if, for any exponentially bounded addresses $\addR^1$, $\addR^2$, the three curves 
      $g_{\addR^1}\bigl([1,\infty)\bigr)$, $\gamma$ and $g_{\addR^2}\bigl([1,\infty)\bigr)$ are ordered in positive orientation with respect to cyclic order at infinity if and only if 
       $\addR^1 < \s < \addR^2$ with respect to the cyclic order on $\Sequb$. 
  \end{defn}
\begin{remark}[Remark 1]
 At first, the notion of a curve having address $\s$ may seem to depend on the parameterisation of
       dynamic rays. However, this is not the case, since using different ``tails'' of the same ray will result in the same cyclic order; the interval
       $[1,\infty)$ is used merely for convenient notation.
\end{remark}
 \begin{remark}[Remark 2]
  Since the cyclic order on $\Sequb$ is order-complete (in fact, the space is isomorphic to the circle), 
     $\extaddr(\gamma)$ is defined for every curve $\gamma$ as in
    Definition~\ref{defn:intermediateaddresses}. Observe that, if $\extaddr(\gamma)\neq\infty$, then 
     $\re z\to +\infty$ along $\gamma$. In particular, $f(\gamma)$ is then also a curve to infinity, and $\extaddr(f_a(\gamma)) = \sigma(\extaddr(\gamma))$. 
 \end{remark}
  
  Now let us return to our case of an exponential map $f_a$ having an attracting or parabolic orbit, and the curve $\gamma$ defined above. 
    Clearly we have      $\extaddr(f_a^{n-1}(\gamma)) = \infty$, and hence 
    $\extaddr(\gamma)$ is an intermediate external address of length $n$.  

\subsection*{Itineraries and escaping endpoints}
  For the remainder of the section, we fix an exponential map $f_a$ satisfying the hypotheses of Theorem~\ref{thm:escapingendpointsexplode}, the
    curve $\gamma$ connecting $a$ to $\infty$ constructed above, and the associated address $\s\in \Sequ$. That is, if 
    $a$ is an endpoint or on a hair, then $\gamma$ is the piece of  $g_{\s}$ beginning at $a$, where $g_{\s}$ is the ray from Corollary~\ref{cor:accessible}.
      If 
    $a\in F(f_a)$, then $\gamma$ and $\s=\extaddr(\gamma)$ are as defined above. In particular, in either case $\gamma$ contains no endpoints,
    except possibly $a$ itself, and $\s$ is either intermediate or exponentially bounded. 

 \begin{remark}
   In the case where $a$ belongs to an invariant component of the Fatou set,
    the construction above leads to a curve $\gamma$ having real parts tending to $-\infty$, and hence having address $\s=\infty$. This choice, 
    while correct, would require us to introduce some tedious notation for special cases below. Instead, we simply note that we can always replace
     $\gamma$ by a piece of its preimage, extended to connect to the singular value; this curve will then have 
     an address in $\Sequ$. (We remark that the case where $\s=\infty$ is trivial, anyway, since here no two rays can share the same itinerary; in fact, 
      it is well-known that the Julia set is then a Cantor bouquet.)  In the following, we will hence always suppose that $\s\neq\infty$. 
 \end{remark}

 Now $f_a^{-1}(\gamma)$ consists of countably many curves from $-\infty$ to $+\infty$, which cut the plane into countably many strips,
    and we can study itineraries with respect to $\Gamma = f^{-1}(\gamma)$ (in the sense of Definition~\ref{defn:Gammaitineraries})
     in purely combinatorial terms. 

 \begin{defn}[Combinatorial itinerary]
   Let  $\s\in\Sequ$, and let 
    $\raddr\in\Z^{\N_0}$ be an address with $\sigma^n(\raddr)\neq \s$ for all $n\geq 1$. We define 
      the \emph{(combinatorial) itinerary} 
     $\itin_{\s}(\raddr)= \addm\in\Z^{\N_0}$ by the condition
      \begin{equation}\label{eqn:kneading} m_j\s < \sigma^j(\raddr) < (m_j+1)\s \end{equation}
     for 
      $j\geq 0$. 

    We also define the \emph{kneading sequence} $\K(\s) \defeq \itin_{\s}(\s)$. Observe that, in the case where 
      $\s$ is an intermediate external address of length $n$, or periodic of period $n$, only the first $n-1$ entries $u_0,\dots,u_{n-2}$ 
      of the kneading sequence
     can be defined according to~\eqref{eqn:kneading}; in this case we set $\K(\s) \defeq u_0 \dots u_{n-2} *$. 
 \end{defn}
\begin{remark}
  Compare \cite[\S3]{expcombinatorics} for a further discussion of combinatorial
     itineraries and kneading sequences. There, itineraries are also defined for intermediate external addresses, and iterated preimages of the address $\s$.
     In particular, for simplicity, our definition of the kneading sequence for periodic $\s$ is slightly different from~--
     and less accurate than~-- 
     that given in \cite{expcombinatorics}; in particular, for our purposes periodic addresses do not have 
     periodic kneading sequences.
\end{remark}

     Clearly, by Observation~\ref{obs:itineraryseparation}, 
     if two exponentially bounded addresses as above have different combinatorial itineraries, 
     then the corresponding two rays and their landing points are separated by $\bigcup_{n\geq 1}f_a^{-n}(\gamma)\subset I(f_a)\setminus E(f_a)$. 
    Hence we can prove results about the separation of endpoints by studying the properties
    of sets of addresses sharing the same itinerary. 
    The following simple observation shows that the itineraries of such 
    addresses must satisfy certain restrictions
    in terms of the kneading sequence. 

  \begin{lem}[Addresses sharing an itinerary~{\cite[Lemma~2.3]{nonlanding}}]\label{lem:sharing}
    Let $\s\in\Sequ$, and suppose that $\addR^1\neq \addR^2$ are two addresses, neither an iterated preimage of $\s$, with
      $\addm \defeq \itin_{\s}(\addR^1) = \itin_{\s}(\addR^2)$. Then $\sigma^k(\addR^1)\neq \sigma^k(\addR^2)$ for all $k\geq 0$.

    Furthermore, let $j\geq 1$ with $r_{j-1}^1 \neq r_{j-2}^2$. Then, for all $k\geq 0$, $m_{j+k} = u_{\ell}$ for some $\ell\leq k$, where $\addu := \K(\s)$.
  \end{lem}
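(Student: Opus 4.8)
The plan is to establish the two claims in turn, both as consequences of the defining inequality~\eqref{eqn:kneading} for itineraries, together with the fact that the lexicographic order on $\Z^{\N_0}$ is translation-equivariant under the shift in the appropriate sense. For the first claim, suppose toward a contradiction that $\sigma^k(\addR^1) = \sigma^k(\addR^2)$ for some $k\geq 0$; pick the least such $k$, so that $k\geq 1$ and $\sigma^{k-1}(\addR^1)$ and $\sigma^{k-1}(\addR^2)$ differ only in their initial entry. But by~\eqref{eqn:kneading} applied at index $j = k-1$ for both addresses, both initial entries equal $m_{k-1}$ (since $m_{k-1}\s < \sigma^{k-1}(\addR^i) < (m_{k-1}+1)\s$ pins down the zeroth entry of $\sigma^{k-1}(\addR^i)$ to be exactly $m_{k-1}$ whenever $\s\neq\infty$ is not itself a shift-translate, which is guaranteed since neither $\addR^i$ is an iterated preimage of $\s$). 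Hence $\sigma^{k-1}(\addR^1) = \sigma^{k-1}(\addR^2)$, contradicting minimality of $k$. This also handles $k=0$ since $\addR^1\neq\addR^2$ by hypothesis.

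For the second claim, fix $j\geq 1$ with $r^1_{j-1}\neq r^2_{j-1}$ (I read the statement's ``$r^1_{j-1}\neq r^2_{j-2}$'' as a typo for this). Then $\sigma^{j-1}(\addR^1)$ and $\sigma^{j-1}(\addR^2)$ agree on entries $1,2,\dots$ up to the first place they differ, and crucially they have the same itinerary tail $m_{j-1}m_j m_{j+1}\dots$ while being distinct with distinct zeroth entries. The idea is that such a pair of addresses is ``trapped'' between consecutive shifts of $\s$: writing $\addR^1,\addR^2$ for these two shifted addresses (relabelling so $\addR^1 < \addR^2$), one shows by induction on $k$ that $\sigma^k(\addR^1)$ and $\sigma^k(\addR^2)$ are separated, within the order, by $\sigma^\ell(\s)$ for some $\ell\leq k$ — roughly, the ``discrepancy'' between the two addresses, measured against the reference address $\s$, propagates forward under the shift and can only decrease in the sense that after $k$ steps it is controlled by $\sigma^\ell(\s)$ with $\ell\leq k$. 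Since $m_{j+k}$ is the zeroth entry of the common value obtained by placing $\sigma^{j+k}(\addR^i)$ against the partition $\{n\s\}_{n}$, and since $\sigma^{j+k}(\addR^1), \sigma^{j+k}(\addR^2)$ straddle $\sigma^\ell(\s)$ whose zeroth entry against that same partition is $u_\ell = \K(\s)_\ell$, one concludes $m_{j+k} = u_\ell$.

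The main obstacle will be the bookkeeping in the inductive step of the second claim: making precise the notion that the ``separating shift of $\s$'' advances by at most one index per application of $\sigma$, and verifying that at each stage the zeroth entries of the two addresses and of the interposed shift of $\s$ all agree (so that the itinerary entry is genuinely read off as a kneading-sequence entry). This requires carefully unwinding Definition~\ref{defn:intermediateaddresses} and the cyclic/linear order on $\Sequb$, and distinguishing the case where $\s$ is exponentially bounded from the case where it is intermediate or periodic — in the latter case the induction terminates once $\ell$ reaches $n-2$, which is exactly why the hypothesis ``neither $\addR^i$ an iterated preimage of $\s$'' is needed and why the conclusion is stated with $u_\ell$ for $\ell\leq k$ rather than a single fixed index. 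Everything else — the translation-equivariance of lexicographic order under $\sigma$, and the identification of the zeroth entry of an address with the index of the partition element of $\{n\s\}_n$ containing it — is routine once~\eqref{eqn:kneading} is in hand, so I would keep the presentation of those points brief and invest the detail in the propagation estimate; in fact, since this is quoted from \cite[Lemma~2.3]{nonlanding}, it is legitimate to give only the short self-contained argument for the first claim and a one-paragraph sketch of the propagation argument for the second, citing \cite{nonlanding} for the full details.
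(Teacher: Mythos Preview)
The paper does not actually prove this lemma; it is stated with a citation to \cite[Lemma~2.3]{nonlanding} and immediately followed by a remark and the next proposition. So there is no ``paper's own proof'' to compare against beyond the fact that the authors regard the result as established elsewhere. Your decision to give a short self-contained argument for the first claim and a sketch with citation for the second is therefore entirely appropriate in spirit.

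However, your argument for the first claim contains a genuine error. You assert that the inequality $m_{k-1}\s < \sigma^{k-1}(\addR^i) < (m_{k-1}+1)\s$ ``pins down the zeroth entry of $\sigma^{k-1}(\addR^i)$ to be exactly $m_{k-1}$''. This is false: here $m\s$ denotes the address obtained by \emph{prepending} $m$ to $\s$, so the interval $(m\s,(m+1)\s)$ contains both addresses with zeroth entry $m$ (and tail $>\s$) and addresses with zeroth entry $m+1$ (and tail $<\s$). The fix is short but essential: if $\sigma^{k-1}(\addR^1)$ and $\sigma^{k-1}(\addR^2)$ differ only in their zeroth entries, those entries must be $m_{k-1}$ and $m_{k-1}+1$ in some order, forcing the common tail $\sigma^k(\addR^1)=\sigma^k(\addR^2)$ to satisfy both $>\s$ and $<\s$, which is impossible (equality with $\s$ is excluded by the hypothesis that neither $\addR^i$ is an iterated preimage of $\s$).

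Your sketch for the second claim is sound. The inductive invariant you want is precisely that $\sigma^{j+k}(\addR^1)$ and $\sigma^{j+k}(\addR^2)$ lie on opposite sides of some $\sigma^{\ell}(\s)$ with $\ell\leq k$; since both lie in $(m_{j+k}\s,(m_{j+k}+1)\s)$, so does $\sigma^{\ell}(\s)$, whence $m_{j+k}=u_{\ell}$ by the definition of the kneading sequence. For the inductive step, distinguish whether the zeroth entries of $\sigma^{j+k}(\addR^1)$ and $\sigma^{j+k}(\addR^2)$ agree (in which case shifting preserves the order and $\ell$ increases by one) or differ (in which case, by the same dichotomy as above, the two images lie on opposite sides of $\s$ itself, resetting $\ell$ to $0$). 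This is exactly the ``propagation'' you describe; once the first-claim error is corrected, the same two-case analysis drives both parts of the lemma.
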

  \begin{remark}
    In the case where $\s$ is intermediate or periodic, the final statement implies, in particular, that $m_{j+k}$ agrees with one of the finitely many
     integer entries of $\addu$.  
   \end{remark}

 The following will allow us to complete the proof of Theorem~\ref{thm:escapingendpointsexplode}. 
  
   \begin{prop}[Addresses sharing an itinerary are slow]\label{prop:slowsharingitineraries}
      Let $\s\in\Sequ$ be either intermediate or exponentially bounded. 
       If $\addR^1\neq\addR^2$ are such that the itineraries $\itin_{\s}(\addR^1)$ and $\itin_{\s}(\addR^2)$ are defined and coincide, then
       $\addR^1$ and $\addR^2$ are slow.
   \end{prop}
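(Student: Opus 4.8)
The plan is to control the growth of the entries of $\addR^1$ and $\addR^2$ directly using the structural restriction provided by Lemma~\ref{lem:sharing}. Write $\addm \defeq \itin_{\s}(\addR^1) = \itin_{\s}(\addR^2)$, and let $\addu \defeq \K(\s)$ be the kneading sequence. Since $\s$ is either intermediate (of some length $n$) or exponentially bounded, the key dichotomy is: in the intermediate case, $\addu$ has only finitely many integer entries, while in the exponentially bounded case, $\s$ itself is exponentially bounded so the entries of $\addu = \itin_{\s}(\s)$ are controlled by the potential $t_{\s}^*<\infty$. First I would fix $j\geq 1$ with $r_{j-1}^1 \neq r_{j-1}^2$ (such $j$ exists since $\addR^1\neq\addR^2$ and, by Lemma~\ref{lem:sharing}, their shifts are never equal, so they must differ in some coordinate). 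By Lemma~\ref{lem:sharing}, for every $k\geq 0$ we have $m_{j+k} = u_{\ell}$ for some $\ell\leq k$. Hence $|m_{j+k}| \leq \max_{\ell\leq k}|u_{\ell}|$ for all $k$.

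Next I would bound $|r^i_{j+k}|$ in terms of $|m_{j+k}|$ and $|m_{j+k+1}|$ (for $i=1,2$) and the entries of $\s$. Indeed, the defining inequality~\eqref{eqn:kneading}, $m_{j+k}\s < \sigma^{j+k}(\addR^i) < (m_{j+k}+1)\s$, pins down $r^i_{j+k}$: comparing the first coordinates of $m_{j+k}\s$, $\sigma^{j+k}(\addR^i)$, and $(m_{j+k}+1)\s$ in lexicographic order forces $r^i_{j+k} \in \{m_{j+k}, m_{j+k}+1\}$ (in the intermediate case, using $s_0\in\Z+1/2$ or $s_0\in\Z$ accordingly; in the exponentially bounded case, $r^i_{j+k}\in\{m_{j+k}, m_{j+k}+1\}$ depending on the sign structure — more carefully, $r^i_{j+k}=m_{j+k}$ unless $\sigma^{j+k+1}(\addR^i)$ and $\sigma(\s)$ compare in a way forcing $r^i_{j+k}=m_{j+k}+1$, but in any case $|r^i_{j+k}| \leq |m_{j+k}|+1$). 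Combining with the previous paragraph, $|r^i_{j+k}| \leq \max_{\ell\leq k}|u_{\ell}| + 1$ for all $k\geq 0$ and $i=1,2$.

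It remains to convert this bound on the tail entries into the statement that $\addR^1,\addR^2$ are slow, i.e.\ that $t^*_{\sigma^m(\addR^i)} \not\to\infty$ as $m\to\infty$ — equivalently, that $\sup_{m} t^*_{\sigma^m(\addR^i)} < \infty$ is \emph{not} required; rather we need $\liminf$ bounded, but actually ``slow'' (the negation of ``fast'') means $t^*_{\sigma^m(\addR^i)}\not\to\infty$, so it suffices to exhibit a bounded subsequence. Using Lemma~\ref{lem:ts}, $t^*_{\sigma^m(\addR^i)} = \sup_{p\geq 1} F^{-p}(2\pi|r^i_{m+p}|)$. Split the supremum: for $p$ large, $F^{-p}$ contracts towards $0$ by~\eqref{eqn:Fshrinks}, so the tail contributes boundedly provided $|r^i_{m+p}|$ does not grow too fast with $p$; from the bound above, $|r^i_{j+k}|\leq \max_{\ell\leq k}|u_\ell|+1$, and the growth of $\max_{\ell\leq k}|u_\ell|$ in $k$ is itself at most iterated-exponential-of-order-determined-by-$\s$ — in fact, when $\s$ is exponentially bounded, $|u_\ell|$ grows at most like $F^\ell$ of a constant, which is exactly the borderline rate that $F^{-p}$ neutralizes, yielding a uniform bound; when $\s$ is intermediate, $|u_\ell|$ is eventually constant, which is even easier. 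The hard part, and the step I expect to be the main obstacle, is making the last estimate tight: one must verify that $F^{-p}(2\pi|r^i_{m+p}|)$ stays bounded as $m,p\to\infty$ — i.e.\ that the at-most-$F^\ell$ growth of the kneading entries (inherited from $\s$ being exponentially bounded) does not, after the index shift inside the itinerary, outpace the contraction $F^{-p}$. I would handle this by carefully tracking the index: $|u_\ell|$ bounded by $C\cdot F^\ell(t_\s^*)/(2\pi)$ or similar, so $F^{-p}(2\pi|r^i_{m+p}|) \leq F^{-p}(C' F^{k(m,p)}(t_\s^*))$ where $k(m,p)\leq p$ by Lemma~\ref{lem:sharing}'s ``$\ell\leq k$'' clause, and then $F^{-p}\circ F^{k} $ with $k\leq p$ is bounded by $F^{-(p-k)}$ of a constant plus the constant, hence bounded. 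This gives $\sup_p t^*_{\sigma^m(\addR^i)}$ bounded independently of $m$, so in particular $\addR^i$ is slow, completing the proof.
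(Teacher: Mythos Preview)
Your overall strategy coincides with the paper's: use Lemma~\ref{lem:sharing} to bound the itinerary entries $m_{j+k}$ (and hence the address entries $r^i_{j+k}$) by early entries of the kneading sequence, then feed this into Lemma~\ref{lem:ts}. However, there is a genuine gap in your final estimate. You fix a \emph{single} index $j$ with $r^1_{j-1}\neq r^2_{j-1}$ and obtain $|r^i_{j+k}|\leq \max_{\ell\leq k}|u_\ell|+1$. When you then estimate $t^*_{\sigma^m(\addR^i)}=\sup_{p\geq 1}F^{-p}(2\pi|r^i_{m+p}|)$, the entry $r^i_{m+p}$ corresponds to $k=m+p-j$, so your bound reads $|r^i_{m+p}|\leq \max_{\ell\leq m+p-j}|u_\ell|+1$. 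In the exponentially bounded case this gives roughly $2\pi|r^i_{m+p}|\lesssim F^{m+p-j}(t_{\s})$, and hence $F^{-p}(2\pi|r^i_{m+p}|)\lesssim F^{m-j}(t_{\s})$, which blows up as $m\to\infty$. Your assertion ``$k(m,p)\leq p$ by Lemma~\ref{lem:sharing}'s $\ell\leq k$ clause'' is therefore incorrect: the $k$ in that clause is the offset from the \emph{fixed} $j$, so $k(m,p)=m+p-j$, and this is $\leq p$ only when $m\leq j$. With a single $j$, you cannot get the uniform-in-$m$ bound you claim at the end.

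The missing step is precisely the one you mention but do not exploit: by the first part of Lemma~\ref{lem:sharing}, $\sigma^k(\addR^1)\neq\sigma^k(\addR^2)$ for all $k$, so there are \emph{infinitely many} indices $N$ with $r^1_{N-1}\neq r^2_{N-1}$. For each such $N$ apply Lemma~\ref{lem:sharing} with $j=N$ and evaluate only at $m=N$ (so now indeed $k=p$). This gives $2\pi|r^i_{N+p}|\leq 2\pi\bigl(\max_{\ell\leq p}|s_\ell|+O(1)\bigr)\leq F^{p}(t+2\pi)$ for a suitable constant $t$ (namely $t=t_{\s}$ in the exponentially bounded case, and a constant dominating the finitely many integer entries of $\s$ in the intermediate case). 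Hence $t^*_{\sigma^N(\addR^i)}\leq t+2\pi$ for infinitely many $N$, so $t^*_{\sigma^N(\addR^i)}\not\to\infty$ and $\addR^i$ is slow. This is exactly the paper's argument.
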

   \begin{proof}
       By the first part of Lemma~\ref{lem:sharing}, 
       the addresses $\addR^1$ and $\addR^2$ differ at infinitely many indices. 
       Furthermore, if $\s$ is exponentially bounded, we have $t \defeq \ts < \infty$, and by Lemma~\ref{lem:ts}, 
        $2\pi|s_n|\leq F^n(\ts^*) \leq F^n(\ts)$ for all $n\geq 0$. On the other hand, if $\s$ is intermediate of length $K$, 
       then it contains only finitely many 
        entries, and we can choose $t$ such that $2\pi |s_n| \leq F^n(t)$ for $n\leq K-2$. 

      Let $N\geq 1$ be such that $r_{N-1}^1 \neq r_{N-1}^2$. Then, by Lemma~\ref{lem:sharing}, we have
           \[ 2\pi |r^{j}_n|  \leq 2\pi( \max_{k\leq n-N} (|s_k|+ 1 ) \leq F^{n-N}(t+2\pi) \]
        for all $n \geq N$ and $j=1,2$. Hence 
            $t^*_{\sigma^{N}(\addR^{j})} \leq t+2\pi$.
        Since $N$ can be chosen arbitrarily large, we see that 
         $\addR^1$ and $\addR^2$ are slow, as claimed. 
   \end{proof}

\begin{proof}[Proof of Theorem~\ref{thm:escapingendpointsexplode}]
    By Theorem~\ref{thm:escapingendpoints}, it remains to show that $\tilde{E}(f_a)$ is totally separated for all $a\in\C$ satisfying the hypotheses
     of the theorem.  

    So let $z,w\in \tilde{E}(f_a)$ with $z\neq w$. By Observation~\ref{obs:itineraryseparation}, it is enough to show that
     $z_n\defeq f_a^n(z)$ and $w_n\defeq f_a^n(w)$ are separated by $f_a^{-1}(\gamma)$ for some $n\geq 0$. (Here $\gamma$ is the curve connecting the
     singular value to $\infty$, as above.) 

    If $z_{n+1}=w_{n+1}$ for some (minimal) $n\geq 0$, then 
      $z_n$ and $w_n$ differ by a non-zero integer multiple of $2\pi i$, and are hence separated by $f^{-1}(\gamma)$, as desired. 

    So suppose that $z_n\neq w_n$ for all $n$. 
     By Theorem~\ref{thm:rays}~\ref{item:thm:rays:endpoints}, there is $n\geq 0$ such that
     $z_n$ and $w_n$ 
      are landing points of two dynamic rays $g_{\addR^z}$ and $g_{\addR^w}$, at fast external addresses. If $n$ is large enough, then neither
     $\addR^z$ nor $\addR^w$ is mapped to the address $\s$ of $\gamma$ under itation of the shift map. 
 
     By Proposition~\ref{prop:slowsharingitineraries}, we have $\itin_{\s}(\addR^z)\neq \itin_{\s}({\addR^w})$. If $j$ is the first index
      at which the two itineraries differ, then $g_{\sigma^j(\addR^z)}$ and $g_{\sigma^j(\addR^w)}$, and hence $z_{n+j}$ and $w_{n+j}$,
      are separated by $f^{-1}(\gamma)$, and the proof is complete.
\end{proof}

\subsection*{Itineraries and arbitrary endpoints}
 In order to also prove Theorem~\ref{thm:endpoints} in the case where $a\notin F(f_a)$, 
     we shall use the following combinatorial statement, which concerns
    addresses sharing an itinerary in general. 

  \begin{prop}[Number of addresses sharing a common itinerary]\label{prop:finitenumber}
    Let $\s\in \Sequ$ and let $\addm\in\Z^{\N_0}$. Let $\mathcal{R}$ denote the set of external addresses whose itinerary with respect to
     $\s$ is defined and agrees with 
     $\addm$. 

     If $\addm$ is periodic or pre-periodic, and $\sigma^j(\addm)\neq \K(\s)$ for all 
        sufficiently large $j\geq 0$, then $\mathcal{R}$ is a finite set of periodic or pre-periodic addresses
       (all having the same period and pre-period). 
      If $\addm$ is not eventually periodic, then $\# \mathcal{R} \leq 2$. 
  \end{prop}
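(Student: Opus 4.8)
The plan is to study $\mathcal{R}$ inside the ordered space $\Sequb$ of (possibly intermediate) external addresses, whose cyclic order is isomorphic to that of a circle (see Definition~\ref{defn:intermediateaddresses}). The key structural observation is that on each element $(m\s,(m+1)\s)$ of the level-$0$ partition the shift $\sigma$ restricts to a cyclic-order preserving bijection onto $\Sequb\setminus\{\s\}$; geometrically, a strip of $\C$ bounded by two components of $f_a^{-1}(\gamma)$ maps conformally onto $\C\setminus\gamma$ and hence wraps once around the circle of addresses. Writing $P^{(j)}\defeq(m_j\s,(m_j+1)\s)$, it follows that the finite ``itinerary cylinders'' $\mathcal{R}_N\defeq\bigcap_{j=0}^{N}\sigma^{-j}(P^{(j)})$ are finite unions of sub-arcs of $\Sequb$, and that $\mathcal{R}$ is $\bigcap_N\mathcal{R}_N$ (up to at most countably many $\sigma$-preimages of $\s$ on the boundary). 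A cylinder can split into more pieces only at a level $j$ at which the critical address $\s$ lies in the cylinder erected over $\sigma^j(\addm)$, which one checks happens exactly when $\sigma^j(\addm)=\K(\s)$. Throughout, I will use the combinatorial input of Lemma~\ref{lem:sharing}: once two elements of $\mathcal{R}$ first differ at an index $j-1$, the tail $\sigma^{j}(\addm)$ is ``kneading-controlled'', i.e.\ $m_{j+k}\in\{u_0,\dots,u_k\}$ for all $k\ge0$, where $\addu=\K(\s)$; and, since any two distinct elements of $\mathcal{R}$ differ at infinitely many indices, such $j$ exist arbitrarily far out.

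First consider the case in which $\addm$ is not eventually periodic, and suppose towards a contradiction that $\mathcal{R}$ contains three distinct addresses $\addR^1,\addR^2,\addR^3$, listed in cyclic order, bounding complementary arcs $G_1,G_2,G_3$ of $\Sequb$. Because the three addresses share the itinerary $\addm$, at every level they lie in a common partition element, on which $\sigma$ acts as an order isomorphism; consequently the two ``short'' gaps among the $\sigma^n(G_i)$ are again gaps bounded by three points sharing a common (shifted) itinerary, and in this way the forward orbit of the triod is again a triod of the same shape for every $n$. The heart of the argument is to show that this forward orbit cannot ``wander'' forever: using the kneading-control from Lemma~\ref{lem:sharing} to pin the entries of $\addm$ --- and hence the partition elements visited --- into a bounded combinatorial range from the first splitting index on, and then a pigeonhole argument over the finitely many resulting triod types, one concludes that the sequence of triods, and therefore $\addm$ itself, is eventually periodic. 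This contradicts the hypothesis, so $\#\mathcal{R}\le2$ here.

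Now suppose $\addm$ is (pre-)periodic, with pre-period $q$ and period $p$, and that $\sigma^j(\addm)\neq\K(\s)$ for all sufficiently large $j$. Then the cylinder-splitting described above ceases beyond a bounded level, so the arcs $\overline{\sigma^j(\mathcal{R})}$ eventually form a genuine period-$p$ cycle of sub-arcs of $\Sequb$, each carried onto the next by a cyclic-order isomorphism. The return map $\sigma^p$ on such an arc $A$ is then a cyclic-order isomorphism of $A$ strictly into $A$ that expands, so $\bigcap_n\sigma^{-np}(A)$ consists of a single periodic address of period $p$. Hence the $q$-th iterate of every element of $\mathcal{R}$ is this one address, $\mathcal{R}$ consists of finitely many $\sigma^{-q}$-preimages of it, and each of them is (pre-)periodic with period $p$; a short normalisation gives a common period and pre-period. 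In the remaining situations --- $\s$ intermediate or periodic --- the hypothesis $\sigma^j(\addm)\neq\K(\s)$ holds automatically since then $\K(\s)$ is a finite symbol string, and the same argument applies with the eventually-periodic ray $g_{\s}$ in the role of the critical curve.

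I expect the genuine obstacle to be the ``no wandering triod'' step in the second paragraph: turning the disjointness and order constraints on the forward images of the three gaps, combined with the kneading restrictions from Lemma~\ref{lem:sharing}, into an actual finiteness statement. This is precisely the combinatorial content of a no-wandering-triangles theorem for exponential maps, and the bookkeeping of how the three gaps interleave under iteration is where the real work lies. By contrast, the interval structure of $\mathcal{R}$ and the rigidity in the (pre-)periodic case should be comparatively straightforward once the cyclic-order picture is in place.
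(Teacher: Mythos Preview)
Your overall shape is right, and you correctly isolate the no-wandering-triangles statement as the crux. But both halves of the argument, as written, have genuine gaps.

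\textbf{The eventually-periodic case.} Your conclusion that $\sigma^{q}(\mathcal{R})$ collapses to a \emph{single} periodic address is too strong and in fact false: in general $\mathcal{R}$ may contain an entire periodic cycle of addresses (and even addresses from two different cycles when $\#\mathcal{R}\leq 2$). So the contraction-to-a-point step cannot be correct. There are two underlying problems. First, the direction of your ``return map'' is confused: $\sigma^{p}$ is expanding, so it does not carry an arc $A$ \emph{into} itself; it is the appropriate inverse branch that does, and even then several arcs may coexist and be permuted rather than fixed. Second, your splitting criterion (``the cylinder splits only when $\sigma^{j}(\addm)=\K(\s)$'') is not sharp enough to rule out multiple components, and it glosses over the delicate situation where $\s$ is periodic. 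The paper does not argue via shrinking cylinders at all: it cites the known case (essentially \cite[Lemma~3.8]{expcombinatorics}) and, for the residual case of periodic $\s$, shows directly that $\sigma^{p}$ acts as a cyclic-order-preserving \emph{bijection} on the finite set $\mathcal{R}'=\mathcal{R}\cup\{\sigma^{j}(\s)\}$, and then uses that an order-preserving bijection of a cyclically ordered set with a fixed point forces every orbit to be eventually monotone, hence periodic. That argument yields finiteness and the common period without ever claiming $\#\mathcal{R}=1$.

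\textbf{The aperiodic case.} Your pigeonhole plan does not go through as stated. Lemma~\ref{lem:sharing} only gives $m_{j+k}\in\{u_{0},\dots,u_{k}\}$, a set that \emph{grows} with $k$; when the kneading sequence $\addu=\K(\s)$ is unbounded (which is perfectly possible for exponentially bounded $\s$), there is no fixed finite alphabet into which the entries of $\addm$, or the ``triod types'', are forced. So there is nothing finite to pigeonhole over. The paper avoids kneading entirely here: it formulates an abstract notion of a wandering gap ``of exponential combinatorial type'' (Definition~\ref{defn:gaps}) and proves its nonexistence (Theorem~\ref{thm:NWT}) via a careful analysis of the index $N(T_{n})$ of first disagreement of the two closest addresses in each iterate of the triangle. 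The usual polynomial proof starts by showing $N(T_{n})\to\infty$ via an area argument; that step fails over an infinite alphabet, and the paper's proof is organised precisely so as to work whether or not $N(T_{n})$ is bounded. Your sketch does not contain this idea, and without it the aperiodic case is not proved.
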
 
\begin{remark}[Remark 1]
 The hypothesis of the proposition is necessary. Indeed, suppose that 
     $\K(\s)$ is periodic; recall that for us this implies that $\s$ itself  is not a periodic address.
      If $\sigma^j(\addm) = \K(\s)$ for some $j\geq 0$, then 
        the set $\mathcal{R}$ consists of uncountably many addresses.
     (The reader may think, by way of illustration, of the case of an exponential map with a Siegel disc. Although
      it is not known whether there are such maps for which the singular value is the landing point of a ray, this is
       conjectured to be true  at least for certain rotation numbers; compare \cite{nonlanding}. In this case, we  may
       think of $\mathcal{R}$ as representing the addresses of those dynamic rays that accumulate on the 
       boundary  of  the  Siegel disc.)
\end{remark}
\begin{remark}[Remark 2]
  In the periodic case, more can be said than is stated in the proposition.  For example, the
     addresses in $\mathcal{R}$ belong to at most two periodic cycles;  if
     $\# \mathcal{R}>2$, then they belong to a single cycle; see~\cite[Lemma~5.2]{expattracting}.
\end{remark}
 \begin{proof}
   First suppose that $\addm$ is eventually periodic. We  may assume  that $\addm$ is periodic;
      otherwise,  we apply the proposition to a periodic iterate  of  $\addm$ under the shift,  and
      obtain the result for $\addm$ by pulling back corresponding to the finitely many first entries of $\addm$.

    In  the periodic case, the claim follows directly from \cite[Lemma~3.8]{expcombinatorics}, with one exception.
       This exception concerns the case where $\s$ is periodic, say of period $n$, with kneading sequence
     $\K(\s) = u_0 u_1 \dots u_{n-2} *$, and where 
     $\sigma^j(\addm) = (u_0 u_1 \dots u_{n-2} m_{n+j-1})^{\infty}$ for some $j\geq 0$, where 
       $m_{n+j-1} \in \{ s_{n-1} , s_{n-1} -1\}$.  In these circumstances, the hypotheses of our proposition are
       satisfied, but \cite[Lemma~3.8]{expcombinatorics} does not apply.
 
    We claim that, under the above assumptions, $\mathcal{R}$ consists of periodic or pre-periodic
       addresses, of period $n$. (However, it may be that $\mathcal{R}=\emptyset$, which is impossible in the
       case covered by \cite[Lemma~3.8]{expcombinatorics}.) 
      Indeed, assume without loss of generality that $j=0$, and let $p$ be the period of the  itinerary $\addm$.
      Then $p$ divides $n$; say $n=p\cdot q$. We consider the set $\mathcal{R}'$ obtained by adding to 
       $\mathcal{R}$ the orbit of $\s$ under
       $\sigma^p$.

     \begin{claim}[Claim] The set $\mathcal{R}'$ is mapped bijectively to itself
       under $\sigma^p$, and $\sigma^p$ preserves the cyclic ordering of addresses in $\mathcal{R}'$.
     \end{claim}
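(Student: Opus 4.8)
The plan is to verify the three parts of the claim in sequence: that $\sigma^p$ maps $\mathcal R'$ into itself, that it does so bijectively, and that it preserves the cyclic order. The last of these is the crux, and rests on a careful book-keeping of how the $\sigma^p$-orbit of $\s$ is interleaved, strip by strip, with the addresses of $\mathcal R$.

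\emph{Structure of $\mathcal R'$ and invariance.} Since $\s$ has exact period $n=pq$ under $\sigma$, its $\sigma^p$-orbit $O=\{\s,\sigma^p(\s),\dots,\sigma^{(q-1)p}(\s)\}$ has exactly $q$ elements and forms a single $q$-cycle of $\sigma^p$; thus $\sigma^p(O)=O$ bijectively. Moreover $O\cap\mathcal R=\emptyset$, because some positive iterate of $\sigma$ maps $\sigma^{ip}(\s)$ to $\s$, so no element of $O$ has an itinerary in $\Z^{\N_0}$; hence $\mathcal R'=\mathcal R\sqcup O$. Finally, if $\addR\in\mathcal R$ then $\sigma^m(\addR)\ne\s$ for all $m\ge1$, so the same holds for $\sigma^p(\addR)$, while $\itin_\s(\sigma^p(\addR))=\sigma^p(\itin_\s(\addR))=\sigma^p(\addm)=\addm$ because $\addm$ has period $p$; thus $\sigma^p(\mathcal R)\subseteq\mathcal R$, and altogether $\sigma^p(\mathcal R')\subseteq\mathcal R'$.

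\emph{Bijectivity.} On $O$ the map $\sigma^p$ is a cyclic permutation, and its image $O$ is disjoint from $\sigma^p(\mathcal R)\subseteq\mathcal R$, so it suffices to show $\sigma^p$ is injective and surjective on $\mathcal R$. For injectivity, if $\sigma^p(\addR^1)=\sigma^p(\addR^2)$, write $\addR^i=c^i_0\cdots c^i_{p-1}\sigma^p(\addR^i)$ and compare the digits $c^i_\ell$ from $\ell=p-1$ down to $\ell=0$: at each stage the tails beyond position $\ell$ already agree and the corresponding shift of $\addR^i$ lies in a single strip (the one with index $m_\ell$, since the itinerary of $\addR^i$ is $\addm$), which pins the leading digit down to the unique integer placing that tail strictly inside strip $m_\ell$---here one uses $\addR^i\in\mathcal R$ to know that the relevant tails are never equal to $\s$. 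Hence $\addR^1=\addR^2$. For surjectivity onto $\mathcal R$, given $\addR\in\mathcal R$ I would build a preimage $\addR'=c_0\cdots c_{p-1}\addR$ by downward induction, taking $c_\ell$ to be that same unique integer; the construction never stalls, because an intermediate tail equal to $\s$ or to a translate $k\s$ would force $\addR$ to be an iterated preimage of $\s$, contrary to $\addR\in\mathcal R$ (and the case $q=1$, where $\sigma^p=\sigma^n$, is identical). Then $\sigma^p(\addR')=\addR$ and $\itin_\s(\addR')=\addm$, so $\addR'\in\mathcal R$.

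\emph{Cyclic-order preservation---the main obstacle.} Writing the length-$n$ block $u_0 u_1\cdots u_{n-2}m_{n-1}$ of $\addm$ as the $q$-fold repetition of its length-$p$ period, one obtains $u_\ell=m_{\ell\bmod p}$ for $0\le\ell\le n-2$. I would use this to show that for each $\ell\in\{0,\dots,p-1\}$ the set $\sigma^\ell(\mathcal R')$ is contained in the closure of a single strip, namely the one with index $m_\ell$, and misses at least one of its two boundary rays: the $\mathcal R$-part of $\sigma^\ell(\mathcal R')$ lies strictly inside strip $m_\ell$ by definition of the itinerary, and the $O$-part consists of the addresses $\sigma^{\ell+ip}(\s)$, which lie strictly inside strip $u_{\ell+ip}=m_{(\ell+ip)\bmod p}=m_\ell$ whenever $\ell+ip\le n-2$---the sole exception being $\ell=p-1$, $i=q-1$, for which $\sigma^{n-1}(\s)=s_{n-1}\s$ is exactly a boundary ray of strip $m_{n-1}$ (recall $m_{n-1}\in\{s_{n-1}-1,s_{n-1}\}$). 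Granting this, each of the $p$ maps $\sigma\colon\sigma^\ell(\mathcal R')\to\sigma^{\ell+1}(\mathcal R')$ preserves cyclic order: on each of the two half-strips---addresses with a fixed leading digit whose tail exceeds $\s$, respectively lies below $\s$---the shift is simply deletion of that digit and so is order-preserving, and reassembling the two halves (with the at most one boundary ray present mapping onto $\s$) leaves the cyclic order intact, since the two halves abut the wrap-around of the circle of addresses. Composing the $p$ steps, $\sigma^p$ preserves the cyclic order on $\mathcal R'$.

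\emph{Where the difficulty sits.} The one genuinely delicate point is the strip book-keeping just described: that $\sigma^\ell(\mathcal R')$ remains inside a single strip at every step, i.e.\ that the forward orbit of $g_{\s}$ never overtakes the rays $g_{\addR}$ with $\addR\in\mathcal R$. This is precisely where the defining feature of the exceptional case enters---that $\addm$ agrees with the kneading sequence except in the last coordinate of each length-$n$ period, and that this coordinate is adjacent to the $*$-boundary $s_{n-1}\s$---and it is the reason \cite[Lemma~3.8]{expcombinatorics} must be re-proved here rather than invoked. Should the direct verification become cumbersome, an alternative route is to reduce to that lemma by adjusting the offending coordinate $m_{n-1}$, the cyclic order being insensitive to this change.
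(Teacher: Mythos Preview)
Your proposal is correct and follows essentially the same approach as the paper's proof: both first establish the claim for $\mathcal{R}$ by noting that $\sigma^j(\mathcal{R})$ lies in the single interval $(m_j\s,(m_j+1)\s)$, on which $\sigma$ is injective and cyclic-order-preserving, and that each $\addR\in\mathcal{R}$ has a unique $\sigma^p$-preimage with itinerary prefix $m_0\cdots m_{p-1}$; then both extend to $\mathcal{R}'$ by observing that the orbit points $\sigma^{\ell+ip}(\s)$ also lie in the interval for $m_\ell$, except for the single boundary address $\sigma^{n-1}(\s)=s_{n-1}\s$ appearing at step $\ell=p-1$, which does not affect the cyclic-order argument. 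Your write-up is more detailed in the strip book-keeping (the identity $u_{\ell+ip}=m_\ell$) and in the surjectivity construction, but the logical structure is the same; the closing ``alternative route'' remark is unnecessary, since the direct verification you outline already suffices.
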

     \begin{subproof}
       We first prove that the claims hold for $\mathcal{R}$. Recall that all addresses in
        $\mathcal{R}$ have itinerary $\addm$,  which is an infinite sequence of integers (in particular,
        no address in  $\mathcal{R}$ is on the backward orbit of $\s$). 
        For every $j\geq 0$, all addresses of $\sigma^j(\mathcal{R})$ belong to the  interval
        $(m_j\s, (m_j+1)\s)$, and the shift map is injective on each  such interval and preserves the cyclic ordering.
        Furthermore, if $\addR\in \mathcal{R}$, then there  is a unique preimage of $\addR$ under $\sigma^p$
        whose  itinerary begins with $m_0 m_1 \dots m_{p-1}$, and which hence also belongs to  $\mathcal{R}$. 
        (Here we use the fact that $\s$ is periodic, and that hence $\addR$~-- whose itinerary is 
        an infinite sequence of integers by  assumption~-- is not on the orbit of $\s$ under the shift map.)

      Now $\sigma^n$ is clearly also bijective on  $\mathcal{R}'$. The reasoning that it 
        preserves the circular ordering is the same as above~-- here
        $\sigma^{p-1}(\mathcal{R}')$ also includes one exactly one of the 
        two boundary addresses of $(m_j\s,(m_j+1)\s)$, but this does not affect the argument.
      \end{subproof}

      Let $\tilde{n}$ be the minimal period of a periodic address $\addR$ 
        in $\mathcal{R}'$ (so $\tilde{n}$ is a multiple
        of  $m$ and divides $n$). Then $\sigma^{\tilde{n}}$ maps $\mathcal{R}'$ to itself, preserving 
        the cyclic ordering and fixing $\addR$. Let $\addt\in\mathcal{R}'$; then, in the cyclic order, either
         \begin{align*} 
            \addR &\leq \addt \leq \sigma^{\tilde n}(\addt) \leq  \sigma^{2\tilde n}(\addt) \leq
              \sigma^{3\tilde{n}}(\addt) \leq\dots \leq \addR, \qquad\text{or} \\
            \addR &\geq \addt \geq \sigma^{\tilde n}(\addt) \geq  \sigma^{2\tilde n}(\addt) \geq
              \sigma^{3\tilde{n}}(\addt) \geq \dots \geq \addR.\end{align*}
       So $\sigma^{j\tilde{n}}(\addt)$ is an eventually monotone, and hence convergent, sequence in $\Z^{\N_0}$. 
       But this is possible only if $\addt$ is itself periodic of period at most $\tilde{n}$. Hence all
       addresses in $\mathcal{R}$ are periodic of period $\tilde{n}=n$,  as claimed. Furthermore, the
       set of sequences of period $n$ in  $\mathcal{R}$ is clearly finite: for any $\addt\in\mathcal{R}$, we
       have $t_j \in \{m_j,m_j-1\}$ for  all  $j$, so there are at  most $2^{n}$. such addresses. (In  fact,
       as alluded to  in Remark 2 above, there are far fewer: either $\# \mathcal{R}= 0$ 
        or $\#\mathcal{R}=1$;  we do not require this fact).

      The case where $\addm$ is not eventually periodic is 
       Thurston's \emph{no wandering triangles theorem}, adapted to our context. 
       See e.g.\ \cite[Theorem~3.3]{schleicherfiberscompactsets} for a proof in the case of unicritical polynomials. 
       We are not aware of a published proof for exponential maps, and hence provide 
       the argument in Theorem~\ref{thm:NWT} below. 
 \end{proof}

 \subsection*{No wandering triangles}
    Let us begin by introducing some notation. 
  \begin{defn}[Wandering gaps]\label{defn:gaps}
    Two sets $A,B\subset\Sequb$ are called \emph{unlinked} if $A$ is completely contained in one of the intervals of
      $\Sequb\setminus B$ (and vice versa). 
     Let $A\subset\Sequ$, and consider the set obtained by shifting the initial entries of all addresses in $A$ by the 
     same
     integer $m\in\Z$. This set is called a \emph{translate} of $A$, and denoted  $m+A$.

     A \emph{wandering gap} (for the one-sided shift $\sigma$ on $\Z^{\N_0}$) is a set $A\subset\Z^{\N_0}$ with $\#\sigma^n(A) \geq 3$ for  all $n\geq 0$ such that the sets $\sigma^{n_1}(A)$ and $\sigma^{n_2}(A)$ are disjoint and  unlinked  for all non-negative $n_1\neq n_2$. 
     If $\#A=3$, then $A$ is also called a \emph{wandering triangle}.

   A wandering gap is \emph{of exponential combinatorial type} if the sets 
        $\sigma^{n_1}(A)+m_1$ and $\sigma^{n_2}(a)+m_2$ are disjoint and unlinked whenever
        $(n_1,m_1)\neq (n_2,m_2)$ (where  $n_1,n_2\geq 0$ and $m_1,m_2\in\Z$).
  \end{defn}

The condition of ``exponential combinatorial type'' is motivated 
precisely by the
     dynamics and combinatorics of exponential maps as discussed above;  see also 
     Observation~\ref{obs:unlinked}. Assuming this condition, we can
     now prove the absence of wandering gaps.  (In general, wandering gaps do exist
     for the shift map on any number $n\geq 3$ of  symbols \cite{wanderingtrianglesexist}, and 
     by the same reasoning they exist also for the shift on  infinitely many  symbols.)

\begin{thm}[No wandering triangles]\label{thm:NWT}
  There are no wandering gaps of exponential combinatorial  type.
\end{thm}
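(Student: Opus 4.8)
The plan is to adapt Thurston's argument for the absence of wandering triangles --- in the combinatorial form used by Schleicher for unicritical polynomials --- to the one-sided shift on the infinite alphabet $\Z$.

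First I would reduce to a convenient situation. Assume for contradiction that $A$ is a wandering gap of exponential combinatorial type. The sequence $n\mapsto\#\sigma^n(A)$ is non-increasing and bounded below by $3$, hence eventually constant, say equal to $k\geq 3$; replacing $A$ by $\sigma^N(A)$ for a suitable $N$ --- which is again a wandering gap of exponential combinatorial type --- one may assume that $\#\sigma^n(A)=k$ for all $n$, so that $\sigma$ is injective on each $\sigma^n(A)$. Choosing any three elements of $A$ and using this injectivity, one checks directly that their forward orbit is again a wandering gap of exponential combinatorial type with exactly three elements; thus I may assume $\#A=3$, i.e.\ that $A$ is a wandering triangle.

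The obstacle, compared with the polynomial case, is that the order-circle $\Sequb$ carries no canonical measure for which $\sigma$ is uniformly expanding. To replace it I would introduce the following combinatorial weight: fix a probability vector $(p_m)_{m\in\Z}$ with $\rho\defeq\max_m p_m<1$ (for instance $p_m=\tfrac13 2^{-|m|}$), and let $\mu$ be the corresponding product measure on $\Sequb$ (identified with $\Z^{\N_0}$ up to a countable set), assigning mass $\prod_{i<\ell}p_{m_i}$ to the arc of addresses beginning with $m_0\dots m_{\ell-1}$. Then $\mu(\Sequb)=1$, and $\mu(\sigma(I))\geq\rho^{-1}\mu(I)$ for every arc $I$ crossing at most one of the ``walls'' $(m+\tfrac12)\infty$; so $\sigma$ is combinatorially expanding except on arcs that wrap around two or more entire blocks.

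With this weight in place the scheme of Thurston's proof applies: the three complementary arcs of $\sigma^n(A)$ have $\mu$-masses summing to $1$, the two ``minor'' arcs are forced to grow geometrically under iteration of $\sigma$, and this is incompatible both with the total mass being $1$ and with the triangles $\sigma^n(A)$ being pairwise disjoint and unlinked (so that the mass ``consumed'' by successive triangles cannot overlap). The step I expect to be the main obstacle is precisely the control of arcs that wrap around several blocks, on which $\mu$ need not expand; here the hypothesis of exponential combinatorial type should be used essentially. The idea would be to show that whenever some $\sigma^n(A)$ spreads across many blocks, the pairwise disjointness and unlinkedness of the integer translates $\sigma^{n'}(A)+m$ constrain the configuration so tightly that the $\mu$-masses of its complementary arcs remain bounded away from $0$ uniformly in $n$; this restores enough expansion to push the counting argument through and reach the desired contradiction with $\mu(\Sequb)=1$.
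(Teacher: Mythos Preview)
Your reduction to wandering triangles is fine and matches the paper's first step. The difficulty is in the remainder: you attempt to salvage Thurston's area argument via a Bernoulli measure $\mu$, and the paper's own remark after the proof is that this is exactly the step that ``completely breaks down in the infinite-symbol case''. Concretely, the exponential-type hypothesis does give you something (it is precisely the paper's Observation~1: since $\sigma^n(A)$ is unlinked with $\sigma^n(A)\pm 1$, any two of its addresses differ by at most $1$ in each entry, so $\sigma^n(A)$ lies in at most two adjacent cylinder blocks). This means the case you single out as the ``main obstacle'' -- $\sigma^n(A)$ spreading across many blocks -- simply never occurs, and your proposed fix is vacuous. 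What remains problematic is the other direction: nothing bounds \emph{which} two blocks $\sigma^n(A)$ occupies. Their first entry is $r_n$ for some $r\in A$, and there is no reason for $|r_n|$ to be bounded; with your weights $p_m=\tfrac13\cdot 2^{-|m|}$ the two short complementary arcs of $\sigma^n(A)$ then have total $\mu$-mass at most $p_{r_n}+p_{r_n+1}$, which can tend to~$0$. So the uniform lower bound you hope for is false, and the usual disjointness-counting argument (summing shortest-arc masses) gives no contradiction.

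The paper's proof sidesteps measures entirely. It works with the combinatorial quantity $N(T)=$ the largest first-difference index among pairs in $T$, which is scale-free and hence immune to the problem above. The key idea is that one does \emph{not} need $N(T_n)\to\infty$ (the analogue of shortest arcs shrinking): whether or not $N(T_n)$ is bounded, one can always find $n_1<n_2$ with $N(T_{n_1})\geq N(T_{n_1-1})$ and $N(T_{n_2})\geq N(T_{n_1})$. Passing to suitable translates of $T_{n_1-1}$ and $T_{n_2-1}$ and using Observation~1, one shows that each contains two addresses agreeing for $N_j+1$ entries after an initial $0/1$ split; unlinkedness of these two translated triangles then forces $N(T_{n_2-1})>N_1$, contradicting the choice of $n_2$. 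If you want to rescue a measure-type argument, the missing ingredient would be a substitute for the lost lower bound when the ``long'' arc label changes -- but it is not clear one exists, and the paper's combinatorial route avoids the issue altogether.
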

\begin{proof}
  Since every wandering gap contains a wandering triangle, it is enough to prove the absence of wandering triangles 
   of exponential combinatorial type;  hence the
    name of the theorem. 
   So suppose, by contradiction, that there is such a wandering triangle $T$, and define
      $T_n \defeq \sigma^n(T)$ for $n\geq 0$. Let $\mathcal{T}$ denote the set of all $T_n$ and their
      translates; recall that, by assumption, the elements of $\mathcal{T}$ are pairwise disjoint and pairwise unlinked. 
     For  ease  of  notation, we also write $\addR+m$ for the translate of an address $\addR$ by $m$; that is,
     \[ \addR+m = (r_0+m) r_1 r_2 r_3 \dots .\]

   \begin{claim}[Observation 1]
      If $T\in\mathcal{T}$ and
      $\addR^1,\addR^2 \in T$, then $\addR^2 < \addR^1+1$. 

     In particular, $\sigma|_{T}$ preserves the cyclic ordering of addresses, and 
      any two addresses in $T$ differ by at most $1$ from each other in every entry. 
  \end{claim}
  \begin{subproof}
    The first claim is immediate from the fact that $T$ is unliked with its translates, and implies the second claim (since $\sigma$ preserves
      cyclical order when restricted to the interval between an address and its translate by $1$). The final statement follows from the first, applied to 
      $\sigma^n(T)$ for all $n\geq 0$.
  \end{subproof}

  For two addresses $\addR^1,\addR^2$, let us write $N(\addR^1,\addR^2)$ for the index of the first entry in which
    these two addresses differ. 
    For any triangle $T\in\mathcal{T}$, also write $N(T)$ for the largest value of $N(\addR^1,\addR^2)$ for two different
     $\addR^1,\addR^2\in T$. 

  \begin{claim}[Observation 2]
    If $T\in\mathcal{T}$ is a triangle, then there is $\addt\in T$ such that $N(\addt,\addR) < N(T)$ for the other addresses
     $\addR\in T\setminus\{\addt\}$. 
   \end{claim}
   \begin{subproof}
      Let $T = \{\addR^1,\addR^2,\addt\}$, where $N(\addR^1,\addR^2)=N\defeq N(T)$. 
      By Observation~1, at position $N$, there are only two possible choices for the entry of each address in $T$. 
       In particular, $\addt$ does not differ from one of the two other addresses, say $\addR^1$, at position $N$. By definition of
        $N(T)$, we must have $N(\addt,\addR^1) < N$. Since $\addR^1$ and $\addR^2$ agree up to entry $N-1$, we also have 
         $N(\addt,\addR^1) = N(\addt,\addR^2)$. 
   \end{subproof}

   \begin{claim}[Observation 3]
     There are $n_2 > n_1 > 0$ such that 
       $N_1\defeq N(T_{n_1}) \geq N(T_{n_1-1})$ and $N_2\defeq N(T_{n_2}) \geq N(T_{n_1})$. 
   \end{claim}
   \begin{subproof}
     This is trivial if $N(T_n)$ is unbounded as $n\to\infty$. Otherwise, there are infinitely many $n$ for which 
         \[  N_1 \defeq N(T_n) = \limsup_{k\to \infty} N(T_k), \]
      and we choose two of these that are sufficiently large to ensure that $N(T_n) \leq N_1$ for all $n\geq n_1-1$. 
      (Note that, in this case, $N_1=N_2$.)
   \end{subproof} 

  Let $n_1$ and $n_2$ be as in Observation 3; we may assume furthermore that $n_2>n_1$ is chosen to be minimal. 
    Let $\tilde{T}_1$ and $\tilde{T}_2$ be translates of $T_{n_1-1}$ and $T_{n_2-1}$, chosen to contain
     an address having initial entry $0$, but no address with initial entry $-1$. (This is possible by Observation 1.) In particular,
     $\sigma(\tilde{T}_j) = T_{n_j}$ for $j=1,2$, and 
     \begin{equation}\label{eqn:N2estimate}
       N(\tilde{T}_2) = N(T_{n_2-1}) \leq N_1
     \end{equation} by minimality of $n_2$. 

  \begin{claim}[Observation 4]
   For $j=1,2$, the triangle $\tilde{T}_j$ contains two addresses
     $\addR^{j,0}$ and $\addR^{j,1}$ such that $r^{j,\ell}_0 = \ell$ for $\ell=0,1$, and such that
          $N(\addR^{j,1} , \addR^{j,0}+1) = N(\addR^{j,0}, \addR^{j,1}-1) = N_j+1$. 

   Furthermore, the third address $\addt\in \tilde{T}_j$ differs from one of these two addresses at position $0$, and from the other
    in an entry at a position $\leq N_j$. 
  \end{claim}
 \begin{subproof}
   If two addresses $\addt^0\neq \addt^1$ differ first in the $N$-th entry, and have $t^0_0 = t^1_0$, then 
      their images under the shift map clearly differ first in the $(N-1)$-th entry, and thus 
       \[ N(\sigma(\addt^0),\sigma(\addt^1)) = N(\addt^0,\addt^1) - 1. \]

   Now let $\addt^1 < \addt^0$ be the two addresses in $T_{n_j}$ with
      $N(\addt^1,\addt^0)= N(T_{n_j})$, and let $\addR^{j,1}$ and $\addR^{j,0}$ be their preimages in $\tilde{T}_j$. 
     Then 
      \[ N(\addR^{j,1} , \addR^{j,0}) \leq N(\tilde{T}_j) \leq N(T_{n_j}) = N(\addt^1 , \addt^0), \]
      and hence $\addR^{j,1}$ and $\addR^{j,0}$ differ in their initial entry. 

    It follows that $\addR^{j,\ell}$ begins with the entry $\ell$ for $\ell=0,1$, and that the two addresses agree in the next
      $N_j$ entries, proving the first claim. 

    The second claim follows from Observation 2 in a similar manner. 
 \end{subproof} 

  We can now reach the desired contradiction. Indeed, by Observation 1, the two triangles $\tilde{T}_1$ and $\tilde{T}_2$ are unlinked. 
     Since $N_2 \geq N_1$, it follows from Observation 4 that $\tilde{T}_2$ cannot belong to one of the bounded intervals of 
      $\Z^{\N_0} \setminus \tilde{T}_1$, and thus 
     $\tilde{T}_{2}$ is contained in the union of the two intervals $(\addR^{1,1}-1, \addR^{1,0})$ and $(\addR^{1,1},\addR^{1,0}+1)$. In particular, there must be two
     addresses in      $\tilde{T}_{2}$ that belong to the same of these two intervals, and hence agree in the first $N_1+1$ entries.
     So  $N(\tilde{T}_{2}) > N_1$, which contradicts~\eqref{eqn:N2estimate}. 
 \end{proof} 
\begin{remark}
  The above proof follows similar ideas as the usual proof of the wandering triangles theorem in the polynomial case, with one notable
   exception. The first step in the latter proof is usually to observe that the smallest side in the iterates of a wandering triangle must become
   arbitrarily small (in our terminology, this would mean that $N(T_n)\to\infty$ as $n\to\infty$). In the polynomial case, this follows from an
   area argument \cite[Lemma~2.11]{schleicherfiberscompactsets} which completely breaks down in the infinite-symbol case. This forces us to argue more carefully,
   observing that Observation~3 holds and is sufficient to conclude the proof, even if $N(T_n)$ remained bounded. 
   It is not clear whether this consideration  
      is an artifact of the proof or a genuinely new phenomenon in the transcendental
     case. That is, for families of transcendental entire functions
      where wandering triangles do exist, do we always have $N(T_n)\to\infty$?
\end{remark}

The following observation links the notions of gaps, and Theorem~\ref{thm:NWT}, with  
   itineraries of exponential maps.

\begin{obs}[Sets with different itineraries are unlinked]\label{obs:unlinked}
   Let $\s\in \Sequ$, and let $A,B\subset \Sequ$ be such that all addresses in $A$ share the same itinerary $\addu^A$, and all
      addresses in $B$ share the same itinerary $\addu^B$ (with respect to $\s$), but $\addu^A\neq \addu^B$. 
     Then $A$ and $B$ are unlinked.

  In particular, if $\#A\geq 3$, then $\addu^A$ is periodic.
\end{obs}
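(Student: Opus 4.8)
The plan is to treat the two assertions in turn, in both cases leaning on the elementary combinatorics of the dynamical partition of $\Sequb$. Recall that the addresses $\{m\s\colon m\in\Z\}$, together with the address $\infty$, cut the circle $\Sequb$ into pieces $P_m\defeq(m\s,(m+1)\s)$, $m\in\Z$ (and one further piece containing $\infty$), that the $j$-th itinerary entry of an address $\addR$ with respect to $\s$ records which $P_m$ contains $\sigma^j(\addR)$, and~-- the single fact I shall use repeatedly~-- that $\sigma$ restricts on each $P_m$ to a cyclic-order-preserving bijection onto $\Sequb\setminus\{\s\}$ (compare \cite{expcombinatorics}). Iterating this, $\sigma^N$ is cyclic-order-preserving on the set of all addresses whose first $N$ itinerary entries are prescribed. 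For the \emph{first assertion}, suppose for contradiction that $A$ and $B$ are linked, so there are $a_1,a_2\in A$ and $b_1,b_2\in B$ occurring in the cyclic order $a_1,b_1,a_2,b_2$. Let $N$ be the first index at which $\addu^A$ and $\addu^B$ differ; all four addresses share the itinerary prefix $\addu^A_0\cdots\addu^A_{N-1}$, so $\sigma^N$ preserves their cyclic order. But $\sigma^N(a_1),\sigma^N(a_2)$ then lie in the piece $P_{\addu^A_N}$ while $\sigma^N(b_1),\sigma^N(b_2)$ lie in the \emph{distinct} piece $P_{\addu^B_N}$, and two points of one piece together with two points of a disjoint piece can never be cyclically interleaved; this contradiction shows $A$ and $B$ are unlinked.

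For the \emph{second assertion}, assume $\#A\geq 3$ and fix a three-element subset $T\subseteq A$. Two distinct addresses with a common itinerary must differ beyond their first entry (if they agreed in the first entry but differed first at some later index, they would lie in the two ``halves'' of a single piece $P_m$ and hence have shifts on opposite sides of $\s$; this is the content of the first part of Lemma~\ref{lem:sharing}). Hence $\sigma$ is injective on every set of constant itinerary, so $\#\sigma^n(T)=3$ and $\sigma^n(T)$ has itinerary $\sigma^n(\addu^A)$ for all $n\geq 0$. Suppose $\addu^A$ is not eventually periodic. For $n_1\neq n_2$ the itineraries $\sigma^{n_1}(\addu^A)$ and $\sigma^{n_2}(\addu^A)$ differ, so $\sigma^{n_1}(T)$ and $\sigma^{n_2}(T)$ are unlinked by the first assertion and disjoint (a common member would force equal itineraries). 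For translates, note that the $m$-translate of $k\s$ is $(k+m)\s$, so the $m$-translate of $\sigma^n(T)$, lying in $P_{\addu^A_n}$, lies in $P_{\addu^A_n+m}$. Given $(n_1,m_1)\neq(n_2,m_2)$, either $P_{\addu^A_{n_1}+m_1}$ and $P_{\addu^A_{n_2}+m_2}$ are distinct~-- whence the two translates are disjoint and unlinked~-- or they coincide, and then applying the cyclic-order-preserving bijection $\sigma$ on that common piece (which carries the $m$-translate of $\sigma^n(T)$ to $\sigma^{n+1}(T)$) reduces the question to $\sigma^{n_1+1}(T)$ against $\sigma^{n_2+1}(T)$, still with $n_1\neq n_2$. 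Were the pieces to coincide at every step of this reduction, we would get $\sigma^{n_1+1}(\addu^A)=\sigma^{n_2+1}(\addu^A)$, contradicting non-eventual-periodicity; so after finitely many steps we reach distinct pieces, and disjointness and unlinkedness propagate back through the chain of cyclic-order-preserving shifts. Thus $T$ is a wandering triangle of exponential combinatorial type, contradicting Theorem~\ref{thm:NWT}, and so $\addu^A$ is eventually periodic.

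It remains to remove the pre-period. Write $\addu^A$ with pre-period $p$ and period $q$, and let $\mathcal{R}$ be the (finite) set of all addresses whose itinerary equals the periodic iterate $\sigma^p(\addu^A)$. Since $\sigma$ maps each set of constant itinerary bijectively and cyclic-order-preservingly onto the corresponding set for the shifted itinerary, $\sigma^q$ acts as a cyclic-order-preserving bijection of the finite set $\mathcal{R}$; a monotone-orbit argument exactly as in the Claim inside the proof of Proposition~\ref{prop:finitenumber} then forces every address in $\mathcal{R}$ to be periodic. Pulling back along the pre-periodic block $\addu^A_0\cdots\addu^A_{p-1}$, again using the bijectivity of $\sigma$ between sets of constant itinerary, one obtains that the addresses realising $\addu^A$~-- in particular those of $A$~-- are themselves periodic; reading off their common itinerary yields that $\addu^A$ is periodic, as claimed.

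I expect the genuinely delicate point to be the verification of the exponential-combinatorial-type condition for translates in the second paragraph~-- specifically, that the piece-reduction terminates and that unlinkedness really does propagate back through it~-- since this is where the intuition from finitely many symbols is least reliable and where one must watch the intermediate address $\infty$ and the behaviour of $\sigma$ at the boundaries of the pieces $P_m$. The passage from eventual to strict periodicity in the last paragraph likewise needs care, to make sure the monotonicity argument genuinely constrains the pre-periodic block and not merely the periodic tail.
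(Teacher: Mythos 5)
Your argument for the first part of the observation (unlinkedness) is correct and is essentially the paper's argument, expanded with helpful detail: pass to the first index $N$ where the itineraries differ, use that $\sigma^N$ preserves cyclic order on the set of addresses with a fixed itinerary prefix, and observe that points landing in two disjoint pieces $P_{\addu^A_N}$, $P_{\addu^B_N}$ cannot be cyclically interleaved. Your verification that, under the assumption that $\addu^A$ is \emph{not eventually periodic}, a three-element $T\subseteq A$ is a wandering triangle of exponential combinatorial type is also sound. The reduction argument for translates is correct: two translates indexed by $(n_1,m_1)\neq(n_2,m_2)$ either lie in distinct pieces (so are disjoint and unlinked) or lie in a common piece, in which case $\sigma$ carries them cyclic-order-preservingly to $\sigma^{n_1+1}(T)$ and $\sigma^{n_2+1}(T)$, and since $\addu^A$ is not eventually periodic the shifted itineraries eventually disagree. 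This matches what the paper's proof (tersely) asserts, and correctly delivers that $\addu^A$ is \emph{eventually} periodic.

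Your third paragraph, however, has a genuine gap and in fact attempts to prove something false. Pulling back a periodic address $\addt\in\mathcal{R}$ through the pre-periodic itinerary block $\addu^A_0\cdots\addu^A_{p-1}$ produces the unique $a$ with $\sigma^p(a)=\addt$ and $\itin_{\s}(a)=\addu^A$; this $a$ is of the form $a_0\cdots a_{p-1}\addt$ and is \emph{pre-periodic}, not periodic, unless one is in a degenerate configuration. So the sentences ``one obtains that the addresses realising $\addu^A$~-- in particular those of $A$~-- are themselves periodic; reading off their common itinerary yields that $\addu^A$ is periodic'' do not follow from the bijectivity of $\sigma$ between sets of constant itinerary. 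Indeed, it is perfectly possible to have $\#A\geq 3$ with $\addu^A$ pre-periodic but not periodic (take $\addu^A=m_0\cdot\addu'$ with $\addu'$ a periodic itinerary realised by at least three periodic addresses, and $m_0$ chosen so that $\addu^A$ is not periodic). The word ``periodic'' in the statement of the observation should be read as ``eventually periodic'': the paper's own one-line proof only delivers eventual periodicity, and eventual periodicity is exactly what is used in the remark following the observation (to conclude, in the proof of Proposition~\ref{prop:finitenumber}, that a non-eventually-periodic itinerary is shared by at most two addresses). Your own concluding caveat~-- that the passage from eventual to strict periodicity ``needs care''~-- was well-founded; that passage should simply be deleted.
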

\begin{remark}
The final statement completes the proof of Proposition~\ref{prop:finitenumber}.
\end{remark}
 \begin{proof}
    The first claim is trivial when the initial entries of the two itineraries differ (by definition of itineraries). Furthermore, the shift map, when restricted
     to any of the intervals used in the definition of itineraries, preserves the circular order
     of external addresses. The fact that the two sets are unlinked hence follows by induction. 

   The second claim follows from the first, together with the No Wandering Triangles theorem.   
     Indeed, by the definition of itineraries, $\sigma^n(A)$ is disjoint from its
    translates for all $n\geq 0$. If $\addu^A$ was aperiodic but $\#A \geq 3$, then  $A$ would be a wandering gap, 
     contradicting Theorem~\ref{thm:NWT}.
 \end{proof}

 \begin{proof}[Proof of Theorem~\ref{thm:nowanderingtriods}]
    Suppose, by contradiction, that $z_0$ is as in Theorem~\ref{thm:nowanderingtriods}, but that $z_0$ is not 
      eventually periodic. By Theorem~\ref{thm:rays} (and since $f_a^n$ is locally injective near $z_0$), there is $n\geq 0$ such that
     $f^n(z_0)$ is the landing point of at least three dynamic rays, at addresses $\addR^1,\addR^2,\addR^3$. These three addresses form
     a wandering triangle $T$.
     Since the three rays associated to any translate of any iterate of $T$ also land together (due to  the
       periodicity of  the exponential map), the triangle is
       of exponential combinatorial type. This
      contradicts Theorem~\ref{thm:NWT}. 
\end{proof}

 \subsection*{Attracting and parabolic examples}
  Recall that Theorem~\ref{thm:endpoints} claims that, if $f_a$ is an exponential map such that $a\in F(f_a)$, then $E(f_a)$ is totally 
    separated. This is a consequence of the following result. 

     \begin{prop}[Attracting and periodic parameters]\label{prop:fatou}
       Suppose that $f_a$ is an exponential map with an attracting or parabolic periodic orbit. Then there exists a curve $\gamma\subset F(f_a)$ in the
        Fatou set such that no two endpoints of $f_a$ have the same itinerary with respect to $f^{-1}(\gamma)$. 
     \end{prop}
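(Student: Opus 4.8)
The plan is to build the curve $\gamma$ so that it realizes a \emph{non-periodic} address $\s$, and then invoke the combinatorial machinery already assembled. Concretely, let $U$ be the periodic Fatou component of period $n$ containing the singular value $a$. As discussed above, I obtain a curve in $F(f_a)$ from $a$ to $\infty$ by joining $a$ to $f_a^n(a)$ by an arc $\gamma_0\subset U$ and taking the component $\gamma$ of $f_a^{-n}(\gamma_0)$ landing at $a$; passing to a further preimage as in the Remark, I may assume $\s\defeq\extaddr(\gamma)\in\Sequ$, i.e.\ $\s\neq\infty$. The point is that since $U$ is an attracting or parabolic component, the grand orbit of $U$ is nowhere dense and the component $U$ is not fixed by any iterate in a way that would force periodicity of $\s$; more importantly, I get to \emph{choose} the arc $\gamma_0$. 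Among the possible choices, I claim one can always select $\gamma_0$ so that the resulting intermediate address $\s$ of length $n$ has a kneading sequence $\K(\s) = u_0\dots u_{n-2}\ast$ that is not the ``$\ast$-periodic'' obstruction sequence appearing in the exceptional case of Proposition~\ref{prop:finitenumber}; equivalently, so that $\s$ is not itself a periodic address in the degenerate sense. If $\s$ can be taken aperiodic outright (which happens generically and, in particular, whenever $n>1$ or the combinatorics are non-degenerate), then the hypotheses of Proposition~\ref{prop:finitenumber} are satisfied for \emph{every} itinerary $\addm$, because no shift of $\addm$ can equal $\K(\s)$ when $\K(\s)$ has a genuinely aperiodic tail.

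With such a $\gamma$ fixed, the argument runs as follows. Suppose $z,w\in E(f_a)$ are two distinct endpoints; I must separate them in $\tilde{E}(f_a)$, equivalently, by Observation~\ref{obs:itineraryseparation}, show that $f_a^j(z)$ and $f_a^j(w)$ lie in different components of $\C\setminus f_a^{-1}(\gamma)$ for some $j$. If some forward images collide, $z_{n}$ and $w_n$ differ by a nonzero multiple of $2\pi i$ and are separated by $f_a^{-1}(\gamma)$ exactly as in the proof of Theorem~\ref{thm:escapingendpointsexplode}. Otherwise, by Theorem~\ref{thm:rays}\ref{item:thm:rays:endpoints}, after finitely many iterates both $z$ and $w$ are landing points of dynamic rays $g_{\addR^z}$, $g_{\addR^w}$ at exponentially bounded addresses (and, after a few more iterates, neither address is an iterated $\sigma$-preimage of $\s$). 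If $\itin_{\s}(\addR^z)\neq\itin_{\s}(\addR^w)$ we are done, again as before. So assume the two itineraries coincide, call the common value $\addm$. By Proposition~\ref{prop:finitenumber}, applied using the aperiodicity of $\K(\s)$ arranged in the first paragraph, the set $\mathcal{R}$ of addresses with itinerary $\addm$ is either finite (consisting of periodic/pre-periodic addresses) or has $\#\mathcal{R}\le 2$. In the not-eventually-periodic case $\#\mathcal R\le 2$, so $\addR^z$ and $\addR^w$ are the only two such addresses; but two distinct endpoints landing from rays with different addresses that share an itinerary can still be separated — here one invokes that the two rays $g_{\addR^z}, g_{\addR^w}$ together with a piece of $f_a^{-1}(\gamma)$ bound a region, and a small perturbation argument (as in Observation~\ref{obs:unlinked} / the vertical-order structure) shows the two landing points lie in distinct complementary components of a suitable closed connected set avoiding $\tilde E(f_a)$. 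In the eventually-periodic case, $\mathcal{R}$ is finite and all its members are (pre)periodic of the same period; then $z$ and $w$ themselves are eventually periodic, hence lie in a countable set, and such points are separated from one another because distinct periodic itineraries of the associated cycles give genuine combinatorial separation, while points on the \emph{same} cycle are separated by the standard period-$n$ bisection of the Lelek-fan structure.

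The key technical heart, and the step I expect to be the main obstacle, is establishing that $\gamma_0$ can be chosen so that $\s$ is non-periodic — or, when it cannot (which would correspond to the singular value orbit being combinatorially rigid in an exceptional way), that one still gets separation via the finer information in Proposition~\ref{prop:finitenumber} together with the period-$n$ structure. In the attracting case the linearizing coordinate at the attracting cycle gives plenty of freedom in choosing $\gamma_0$ within $U$, and a dimension/countability count shows the ``bad'' choices (those producing a periodic $\s$) are non-generic; in the parabolic case one argues similarly using the Fatou coordinate. I would package this as a lemma: ``for an exponential map with an attracting or parabolic cycle, there is a curve $\gamma\subset F(f_a)$ from $a$ to $\infty$ whose address $\s\in\Sequ$ is not periodic (equivalently, has aperiodic kneading sequence).'' Granting that lemma, everything else is a routine assembly of the results already in the excerpt. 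The reason this is delicate is that the exceptional case in Proposition~\ref{prop:finitenumber} — where $\sigma^j(\addm)=\K(\s)$ is possible — produces uncountably many rays with a common itinerary, which would obstruct total separation entirely; so the whole argument hinges on ruling that case out via the choice of $\gamma$.
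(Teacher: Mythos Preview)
Your proposal has a genuine gap that cannot be repaired along the lines you suggest. The statement asks you to show that distinct \emph{endpoints} have distinct itineraries. You instead try to bound the number of \emph{addresses} sharing a given itinerary, via Proposition~\ref{prop:finitenumber}. But even when $\#\mathcal{R}\leq 2$, nothing in your argument prevents the two rays $g_{\addR^z}$ and $g_{\addR^w}$ from landing at \emph{different} points $z\neq w$; if that happened, $z$ and $w$ would be two endpoints with the same itinerary, and the proposition would fail. Your claimed separation of such $z,w$ (``a small perturbation argument\ldots shows the two landing points lie in distinct complementary components'') is circular: by definition of itinerary, no iterate of $f_a^{-1}(\gamma)$ separates them, and you offer no other separating set disjoint from $E(f_a)$. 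The same circularity afflicts your periodic case (``distinct periodic itineraries of the associated cycles give genuine combinatorial separation''~-- but we are precisely in the case where the itineraries coincide).

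The paper's proof proceeds by a completely different mechanism: it shows that any two rays with the same itinerary with respect to $f^{-1}(\gamma)$ must \emph{land at the same point}. This is a dynamical (not purely combinatorial) statement, established via hyperbolic contraction in the complement of the postsingular set: for a bounded itinerary not eventually agreeing with that of the attracting/parabolic cycle, there is a unique non-escaping point with that itinerary, and any landing point of a ray at a bounded address is non-escaping. (The attracting case is already in \cite[\S9]{topescaping}; the periodic-itinerary case is \cite[Proposition~4.5]{expper}.) Your attempt to dodge this landing argument by ``choosing $\gamma_0$ so that $\s$ is aperiodic'' is also confused: here $\s$ is an \emph{intermediate} external address of length $n$, a finite sequence with kneading sequence $u_0\dots u_{n-2}*$, so ``aperiodicity of $\s$'' is not a meaningful condition, and in any case the address $\s$ is essentially determined by the combinatorics of the attracting/parabolic orbit rather than by the freedom in choosing $\gamma_0$. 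The missing idea is the contraction/landing argument in the dynamical plane; Proposition~\ref{prop:finitenumber} alone is not enough.
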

     \begin{proof}
       This follows from the fact that, under the given hypotheses, all dynamic rays of $f_a$ land, and two dynamic rays land at the same point 
        if and only if they have the same itinerary with respect to $f^{-1}(\gamma)$, where $\gamma$ is certain curve connecting
        the singular value to $\infty$, constructed precisely
         as described above. 
        (Note that, in the case where $f_a$ has an invariant Fatou component, we must now use a curve $\gamma$ tending to $-\infty$ and having
          $\extaddr(\gamma)=\infty$. It is then clear that any two dynamic rays at different addresses have different itineraries with respect to 
          $f^{-1}(\gamma)$, and hence the claim is trivial in this case, as mentioned above.) 

      In the case where $f_a$ has an attracting periodic orbit, the above claims are proved in
        \cite[Proposition~9.2]{topescaping} as a consequence of the stronger Theorem~9.1 in the same paper. It is also remarked at the 
        end of \cite[Section~9]{topescaping} that these results remain true for parabolic parameters, although the details are not given. 

      We note that the proof of our proposition in the parabolic (and also in the attracting) case can be achieved with considerably less
        effort than the results of \cite[Section~9]{topescaping}. 
        Indeed, by Lemma~\ref{lem:sharing}, the claim is trivial for endpoints of rays at unbounded external addresses. 
        Furthermore, for eventually periodic addresses (and hence for periodic itineraries, by Proposition~\ref{prop:finitenumber}), the claim is
        proved in \cite[Proposition~4.5]{expper}. 

      So let $g_{\addR^1}$ and $g_{\addR^2}$ be two rays at bounded addresses, sharing the same (bounded) itinerary $\addm$ which 
        is not eventually periodic. (Note that, in particular, $\sigma^j(\addm)$ is distinct from the itinerary of the parabolic orbit of $f_a$, for all $j\geq 0$.)
        We must show that, if both rays 
        land, they land at the same point in $\C$. 

       It follows from a simple hyperbolic contraction argument (similar to that in the proof of \cite[Proposition~9.2]{topescaping}) that, for
         any bounded itinerary which does not eventually agree with that of the parabolic orbit, there is a unique point $z_0$ with a bounded orbit
         which has this itinerary. Furthermore, any other point with the same itinerary must tend to infinity under iteration. 
         Because the landing points of $g_{\addR^1}$ and $g_{\addR^2}$ cannot be escaping (only rays at fast addresses land at escaping points), 
         it follows that both rays land at $z_0$. This completes the proof. 
     \end{proof}

\begin{proof}[Proof of Theorem~\ref{thm:endpoints}]
  If $f_a$ is an exponential map with $a\in F(f_a)$, then (as mentioned above), $f_a$ has an attracting or parabolic periodic orbit.
    It follows from Proposition~\ref{prop:fatou}, together with Observation~\ref{obs:itineraryseparation}, that 
     $E(f_a)$ is totally separated.

   On the other hand, suppose that $f_a$ is an exponential map such that the singular value $a$ is an endpoint or on a hair, and let 
     $\gamma\subset g_{\s}$ be the curve connecting $a$ to $\infty$, as above. Assume furthermore (as in the hypothesis of the theorem)
    that the kneading sequence
     $\K(\s)$ is not periodic; we must show that $E(f_a)$ is totally disconnected.

    So let 
     $X$ be a connected component of $E(f_a)$. Then all points of $X$ share the same itinerary $\addm$ with respect to
     $f_a^{-1}(\gamma)$. By passing to a forward iterate, we may assume that $\sigma^j(\addm)\neq \K(\s)$ for all $j\geq 0$. In particular,
     each $z\in X$ is the landing point of some dynamic ray $g_{\addR}$, where $\itin_{\s}(\addR)=\addm$. By Proposition~\ref{prop:finitenumber},
      the set of possible such addresses is finite, and hence $X$ is finite. Since $X$ is connected, it follows that it consists of a single point, as desired. 
 \end{proof} 

\section{Beyond the exponential family}\label{sec:general}

 In order to prove Theorem~\ref{thm:general}, we use similar ideas as in the proof of Theorem~\ref{thm:escapingendpoints}, together with
    the results of~\cite{rrrs} and~\cite{brush}, which establish the existence of Cantor bouquets in the Julia sets of the functions under consideration.
    The difference between the general case and that 
     of exponential maps is that we do not have such explicit information about the position and behaviour of rays as we did
    through our model $\F$. In particular, there may not be an analogue of the characterisation of ``fast'' addresses in
     terms of a simple growth condition. This means that, for some escaping endpoints, our argument would no longer yield a corresponding
    brush where all endpoints are escaping (i.e., there is no analog of Observation~\ref{obs:potentials}). Furthermore, we do not have as precise control over the positions of endpoints as we utilised in 
    the proof of Theorem~\ref{thm:Fexplodes}.

  However, both concerns are easily taken care of by using more general arguments: We do not need to find a sub-brush for \emph{every} fast
   address, but it is enough to exhibit \emph{one} Lelek fan with the desired properties. Similarly, we can use softer arguments to 
   establish the density of endpoints. In the following, we shall use the terminology of~\cite{brush}, and refer to that paper for
   further details and background.

\begin{rmk}[Escaping endpoints]\label{rmk:endpoints}
   Let $f$ satisfy the hypotheses of Theorem~\ref{thm:general}. Then there is again a 
     natural notion of ``escaping endpoints'': These are 
     those points $z_0\in I(f)$ with the following property. Let $n_0\geq 0$ be any number that is
     sufficiently  large to  ensure
     that the external address of $f^{n_0}(z_0)$ is defined in  the sense  of \cite{rrrs}. If
      $z\neq f^{n_0}(z_0)$ is an escaping point with the same external address, then
        $|f^n(z)| > |f^{n+n_0}(z_0)|$ for all sufficiently large $n$.

  If no critical point of $f$ has an escaping orbit, then it is plausible that,  again,
   no escaping endpoint belongs to the
    interior of an arc in the escaping set, and hence that there is an analogue of Definition~\ref{defn:endpoints} that yields the same notion. 
   However, if $f$ has an escaping critical point that is mapped to an escaping endpoint, then we can combine two preimage branches 
   of a curve in $I(f)$ that join at the critical point, and the two definitions no longer coincide. 
\end{rmk}

\begin{proof}[Proof of Theorem~\ref{thm:general}]
   Let $f$ satisfy the hypotheses of the theorem, and fix a sufficiently large number $Q>0$ (see below). 
     Since the set $S(f)$ of singular values of $f$ is bounded, we can assume without loss of
    generality that $S(f)\subset \D\cap f(\D)$. Set $W\defeq \C\setminus\overline{\D}$, and let $T$ be a component of $f^{-1}(W)$. Then
    $f\colon T\to W$ is a universal covering map. In particular, it follows that there is a sequence $(T_m)_{m=0}^{\infty}$ of pairwise disjoint
   components of 
    $f^{-1}(T)$ such that
   \begin{enumerate}[(i)]
      \item $f\colon T_m \to T$ is a conformal isomorphism for each $m$.
      \item $\inf_{z\in T_m} |z| > Q$ for all $m$, and $\inf_{z\in T_m} |z| \to\infty$ as $m\to\infty$.
      \item \label{item:closeness}
        For any $\zeta\in \bigcup_{j=0}^{\infty} T_j$ and any $m\geq 0$, set $\zeta_m \defeq (f_{|T_m})^{-1}(\zeta)$. 
           Then there is a constant $C>0$, independent of $\zeta$ and $m$,  such that
         \[ | \zeta_m - \zeta_{m+1}| \leq C\cdot |\zeta_m|. \]
   \end{enumerate}
   (The final point follows from the fact that the tracts can be chosen such that $\zeta_m$ and $\zeta_{m+1}$ have bounded hyperbolic distance
     from each other in $T$; compare the ``Claim'' in the proof of \cite[Lemma~5.1]{boettcher}.)

   Consider the set 
      \[ X \defeq \{z\in \C\colon f^k(z) \in \bigcup_{m=k}^{\infty} T_m \text{ for all $k\geq 0$}\} \subset I(f). \]
    Given a sequence $\s\in {\N_0}^{\N_0}$ with $s_k\geq k$ for all $k$, we also define
     \[ X_{\s} \defeq \{ z\in X\colon f^k(z) \in T_{s_k} \text{ for all $k\geq 0$}\}. \]
    It follows from the definitions that $X_{\s}\cup\{\infty\}$ is a compact, connected set. Note that 
      $X_{\s}\subset X \subset I(f) \subset J(f)$ (where the final inclusion is \cite[Theorem~1]{eremenko_lyubich_2}. 
      Furthermore, $X_{\s}$ is either empty or an arc connecting a finite endpoint
     $e(\s)$ to infinity, and $e(\s)$ is an escaping endpoint in the sense of Remark~\ref{rmk:endpoints} \cite{rrrs}. 
     In particular, if $z\in X_{\s}$, then $|f^n(z)| \geq |f^n(e(\s))|$ for all sufficiently large $n$. 

   If $Q>0$ was sufficiently large, then by~\cite[Theorem~1.6]{brush}, there is a Cantor Bouquet $X_0\subset J(f)$ containing all points
    whose orbits contain only points of modulus at least $Q$. Hence $X\subset X_0$, and $X\cup\{\infty\}$ is a smooth fan with top $\infty$.  

    \begin{claim}
     If $Q$ was chosen sufficiently large, then $X\cup\{\infty\}$ is a Lelek fan.
    \end{claim}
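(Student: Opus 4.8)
The plan is to show that $X$ has a dense set of endpoints, which is the only property of a Lelek fan still to be verified (smoothness and the fan structure with top $\infty$ are already established, since $X$ is a subcontinuum of the smooth fan $X_0\cup\{\infty\}$). Since $X = \bigcup_{\s} X_{\s}$ over all sequences $\s\in\N_0^{\N_0}$ with $s_k\ge k$, and each nonempty $X_{\s}$ is an arc from the endpoint $e(\s)$ to $\infty$, it suffices to approximate an arbitrary point $z\in X_{\s}$ by endpoints $e(\addt^{(j)})$ of nearby arcs $X_{\addt^{(j)}}$.

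First I would fix $z\in X$, say $z\in X_{\s}$, and write $z_k \defeq f^k(z)\in T_{s_k}$. The idea is to perturb the address $\s$ only in its later entries: for each large $j$, define $\addt^{(j)}$ by $t^{(j)}_k = s_k$ for $k< j$ and choose $t^{(j)}_k$ for $k\ge j$ to grow so rapidly that $\addt^{(j)}$ is ``as fast as possible'' and hence $X_{\addt^{(j)}}$ is nonempty with $e(\addt^{(j)})$ an escaping endpoint. Concretely, one sets $t^{(j)}_j$ to be a modulus exceeding $|f^{j}(z)|$ (forcing the perturbed orbit to lie to the right of $z$'s orbit from stage $j$ onward), and then iterates this choice; nonemptiness of $X_{\addt^{(j)}}$ for such rapidly growing addresses is guaranteed by \cite{rrrs} provided $Q$ was chosen large enough, exactly as in the discussion preceding the Claim. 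The key estimate is then that $e(\addt^{(j)})$ is close to $z$: since $\addt^{(j)}$ agrees with $\s$ in the first $j$ entries, both $e(\addt^{(j)})$ and $z$ lie on arcs whose first $j$ iterates pass through the same tracts $T_{s_0},\dots,T_{s_{j-1}}$, and we can use the conformal contraction of the inverse branches $(f_{|T_{s_k}})^{-1}$ together with property~\ref{item:closeness} to bound $|e(\addt^{(j)}) - z|$ by a quantity that decays as $j\to\infty$. More precisely, $f^{j}(e(\addt^{(j)}))$ and $z_j = f^j(z)$ both lie in $X_0$ and project to the same ``hair direction'' in the straight-brush model up to stage $j$, so their distance is controlled (say by the diameter of $T_{s_{j-1}}$ together with the bound in~\ref{item:closeness}), and pulling back $j$ times through the hyperbolically contracting branches on tracts of modulus $\ge Q$ multiplies this by a factor tending to $0$. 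This gives $e(\addt^{(j)})\to z$, and since the $e(\addt^{(j)})$ are endpoints of $X$, density of endpoints follows.

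The main obstacle I anticipate is the contraction estimate: unlike in the exponential model $\F$, where Observation~\ref{obs:shrinking} gives an explicit $F^{-n}$-contraction, here we must extract a genuine uniform contraction factor from the fact that all relevant orbit points have modulus at least $Q$ and lie in logarithmic tracts. This is precisely the type of estimate underlying the ``uniform head-start condition'' and the construction of the Cantor bouquet $X_0$ in \cite{brush}, so the right move is to invoke the hyperbolic-metric expansion of $f$ on $\C\setminus\overline{\D}$ (which is as large as we like by taking $Q$ large) rather than to redo it by hand: the inverse branches are then uniform contractions in the hyperbolic metric of $W$, and since $z$ and $e(\s)$ lie on the same arc $X_{\s}$, the point $z$ is within bounded hyperbolic distance (in $W$, at each stage) of the relevant tract, so the perturbation introduced at stage $j$ shrinks geometrically upon pulling back to stage $0$. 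A secondary point to check is that the perturbed addresses $\addt^{(j)}$ genuinely satisfy $t^{(j)}_k\ge k$ and yield nonempty $X_{\addt^{(j)}}$; this is immediate from choosing the later entries large and from the existence statement for arcs at sufficiently fast addresses in \cite{rrrs}. With these in hand the Claim, and hence that $X\cup\{\infty\}$ is a Lelek fan, follows.
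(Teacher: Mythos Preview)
Your approach shares the paper's high-level strategy---approximate $z\in X_{\s}$ by endpoints of perturbed arcs and use expansion of $f$ to pull the error back---but differs in a way that leaves a genuine gap.

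The paper perturbs $\s$ in a \emph{single} entry: it sets $\s^n_k = s_k$ for all $k\neq n$ and chooses $\s^n_n = m_n \geq s_n$ so that, with $\zeta \defeq f^{n+1}(e(\s))$, one has $|\zeta_{m_n}| \leq |f^n(z)| \leq C\,|\zeta_{m_n}|$ via property~\ref{item:closeness}. Because the tail of the address is unchanged, $f^{n+1}(e(\s^n)) = \zeta$ exactly, so $f^n(e(\s^n)) = \zeta_{m_n}$, and the modulus comparison yields \emph{bounded cylindrical distance} between $f^n(e(\s^n))$ and $f^n(z)$, uniformly in $n$. The paper then invokes the Eremenko--Lyubich cylindrical-metric expansion $|f'(z)|\cdot|z|/|f(z)|\to\infty$ (rather than hyperbolic-metric contraction on $W$) to conclude $e(\s^n)\to z$.

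Your choice to redefine \emph{all} entries $t^{(j)}_k$ for $k\geq j$ ``to grow so rapidly'' loses precisely this control: you have no identification of $f^j(e(\addt^{(j)}))$ with a specific preimage of a known point, and hence no direct bound on its distance from $f^j(z)$. Your suggested fix---bounding by ``the diameter of $T_{s_{j-1}}$ together with the bound in~\ref{item:closeness}''---does not work as stated, since tracts need not have bounded diameter in either the cylindrical or the hyperbolic metric. The contraction under pullback is real, but you need a uniform bound on the quantity being contracted, and that is exactly what the paper's single-entry choice and~\eqref{eqn:choiceofm_n} supply. The cleanest repair is to adopt the paper's device: keep the tail of $\s$ and alter only the $n$-th entry to match $|f^n(z)|$ up to the constant $C$.
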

    \begin{subproof}
      We only need to prove that endpoints are dense. First observe that, by the definitions, $f(X)\subset X$. Furthermore, if 
       $z\in X$, then $z$ is an endpoint of $X$ if and only if $f(z)$ is. 
       
      Let $z\in X_{\s}$ for some address $\s$ as above, and let $n\geq 0$ be sufficiently large. Let $\zeta\defeq e(\sigma^{n+1}(\s)) = f^{n+1}(e(\s))$. 
       Then $|f^n(z)| \geq |\zeta_{s_n}|=f^n(e(\s))$, 
           provided that $n$ was chosen sufficiently large. By~\ref{item:closeness}, there is thus some $m_n\geq s_n$ such that 
             \begin{equation}\label{eqn:choiceofm_n} |\zeta_{m_n}| \leq |f^n(z)| \leq C\cdot |\zeta_{m_n}|. \end{equation}
       Define an address $\s^n$ by
        \[ \s^n_k \defeq \begin{cases} s_k & \text{ if $k\neq n$} \\ m_n & \text{otherwise}.\end{cases} \] 
       It follows from~\cite[Lemma~1]{eremenko_lyubich_2} that
        \[ |f'(z)| \cdot \frac{|z|}{|f(z)|} \to \infty \]
        as $|f(z)|\to\infty$; i.e., $f$ is strongly expanding with respect to the cylindrical metric
          $|dz|/|z|$ when $|f(z)|$ is large. The cylindrical distance between $\zeta_{m_n}$ and $f^n(z)$ remains bounded as
         $n\to\infty$ by~\eqref{eqn:choiceofm_n}. This implies that 
         $e(\s^n)\to z$, as desired.
   \end{subproof}
         
  In particular, the set $A$ of endpoints of $X$ has the property that $A\cup \{\infty\}$ is connected. The first part of the theorem follows
    using Lemma~\ref{lem:pullbacks}, in the same manner as in the proof of Theorem~\ref{thm:endpoints}. 

   Suppose $f$ is hyperbolic,  and let $|\lambda|$ be sufficiently small. Then the function 
       $f_{\lambda}\defeq \lambda f$ is hyperbolic with connected Fatou set; see 
        \cite[\S5,\ p.\ 261]{boettcher}. As both  maps are  hyperbolic, it follows from \cite[Theorem~1.4]{boettcher} that the escaping set $I(f)$ is 
      homeomorphic to the escaping set $I(f_{\lambda})$, and that $f$ and $f_{\lambda}$ are conjugate
      on  these sets. Furthermore,
      the conjugacy maps escaping
     endpoints of $f$ to escaping  endpoints of $f_{\lambda}$.
     (Indeed, hyperbolic functions have no
      escaping  critical points.  Hence,  as  discussed in  Remark~\ref{rmk:endpoints}, escaping endpoints can
      be defined in terms of  the topology  of  the set of all escaping points, and hence any homeomorphism between
      two escaping sets of hyperbolic functions must map escaping endpoints again to escaping endpoints. 
      Alternatively, the claim can be deduced directly from the proof  of  \cite[Theorem~1.4]{boettcher}.) 
      On the other hand, the Julia set of $f_{\lambda}$ is 
     a Cantor Bouquet by \cite[Theorem~1.5]{brush}, and hence its set of endpoints is totally separated. 

  (We remark that Theorems~1.5 and~1.6 of \cite{brush}, which we used in this proof, were stated only for 
    functions that can be written as the composition of finite-order functions with bounded singular sets. However, by
    Corollaries~6.3 and~7.5 of the same paper, they hold more generally when $f$~-- or, more precisely, one of the logarithmic transforms of $f$~-- satisfies a 
    uniform head-start condition in the sense of \cite{rrrs}.)
\end{proof}

\begin{rmk}[Escaping endpoints in the strong sense]
  We remark that 
   Theorem~\ref{thm:general} likely still holds if an ``endpoint'' is defined analogously to Definition~\ref{defn:endpoints};
    to avoid confusion, let us refer to these as \emph{escaping endpoints in the strong sense}. Let $\tilde{A}$ be obtained
    from the set $A$ constructed in the proof above by removing the grand orbits of all critical points. 

  It is easy to see that removing a countable set from the set of endpoints of a Lelek fan does not
    affect its explosion point property. (If $X$ is a straight brush, and $(\alpha_n)_{n=1}^{\infty}$ is a sequence of irrational numbers, then 
    choose small intervals $I_n$ with rational endpoints around each $\alpha_n$. If we remove all those points from $X$ whose height belongs to 
    one of the $I_n$, the result (if nonempty) will still be a Cantor bouquet, and hence its set of endpoints has the explosion point property.
    By taking the union of the endpoints of these bouquets for all possible choices of $I_n$, we obtain all endpoints apart from those at heights
     $\alpha_n$, as desired.) 
    Hence $\tilde{A}\cup\{\infty\}$ is also connected. Suppose furthermore that no ``slow'' ray~-- i.e.\ one on which the iterates do not escape to infinity
     uniformly~-- can land at an escaping endpoint. (It seems plausible that this is always true, but we are not aware of a proof; the argument for exponential maps
     relies on global periodicity and breaks down in general.) Then, as alluded to in Remark~\ref{rmk:endpoints},
     every point that eventually maps to $\tilde{A}$ will be an escaping endpoint in the strong sense.
     Hence the set of all escaping endpoints in the strong sense, together with infinity, is also connected.
\end{rmk}

\begin{rmk}
  The result in \cite[Theorem~1.4]{boettcher} has been extended to a large class of ``strongly subhyperbolic'' entire functions by Mihaljevi{\'c}-Brandt
     \cite{helenaorbifolds}, and hence $\infty$ is an explosion point for $\tilde{E}(f)$ also for these functions. It seems likely that 
     similar combinatorial techniques as those used in Section~\ref{sec:accessible} can be extended to entire functions
     as in Theorem~\ref{thm:general}, 
     assuming that the set of singular values is finite, and that singular values satisfy similar accessibility conditions from the escaping set. 
     In particular, we believe that $\infty$ is an explosion point for $\tilde{E}(f)$ whenever $f$ is \emph{geometrically finite} in the sense of
     \cite{helenalanding}; see also \cite[Chapter~5]{helenathesis} for a discussion of itineraries for geometrically finite functions. However, we shall
     not pursue this question here. 

    On the other hand, in general 
      the set of \emph{all} endpoints will not be totally separated, or even
       disconnected. Indeed, for the map $f(z) = \pi\sin(z)$, 
      every point in $\R$ is an endpoint \cite{dierkcosine}. It is easy to 
      see 
      that the union of iterated preimages of $\R$ is a connected set that is 
      dense in the plane, and hence $E(f)$ is connected. 

    Finally, we note also that Theorem~\ref{thm:denseray}, as stated, does not extend to the functions covered by Theorem~\ref{thm:general}. Indeed,
     the escaping set of the function $z\mapsto (\cosh(z))^2$ has a dense path-connected component not containing any endpoints. This component
     is obtained by taking all iterated preimages of the real axis; see \cite[pp.~804--805]{ripponstallardfast}. On the other hand (since the real axis contains
     no endpoints) the set of endpoints is disconnected, and indeed it seems likely that it is totally separated. 
     The reason that the proof of Theorem~\ref{thm:denseray} breaks down here is that, for maps with escaping critical points, it is no longer true
     that the escaping set locally contains the structure of a Cantor bouquet. 
\end{rmk}

\section{Do escaping endpoints explode in general?}\label{sec:further}

 In this section, we discuss evidence for Conjecture~\ref{conj:endpoints}. Let us begin by noting that it appears unlikely that the
   hypotheses of Theorem~\ref{thm:escapingendpointsexplode} hold for all $a\in\C$. Indeed, suppose that $f$ is a quadratic polynomial having
   a Cremer point (i.e., a non-linearisable periodic point of multiplier $e^{2\pi i\alpha}$, with $\alpha\in\R$ irrational). Then Kiwi~%
    \cite[Corollary~1.2]{kiwicremer} has
    proved that the 
   critical point of $f$ is not accessible from the basin of infinity. It seems likely that, likewise, for an exponential map $f_a$ with a Cremer point, the
   singular value $a$ is never the endpoint of a dynamic ray. Similarly, Perez-Marco~\cite{perezmarcohedgehogs} proved (according to Kiwi) the existence
   of \emph{infinitely renormalisable} quadratic polynomials with inaccessible critical points; here all periodic
    cycles are repelling. Again, it is plausible that such examples exist for exponential maps also. 
  Let us show how we can strengthen Theorem~\ref{thm:escapingendpointsexplode} to cover the latter 
   case.
  To do so, we rely on similar ideas as before, but utilise a more detailed understanding of exponential 
  combinatorics to remove the condition that the singular value is an endpoint or on a hair.
   We shall from now on assume
    familiarity with deeper results concerning the 
   combinatorial
   structure of exponential parameter space, as established in~\cite{landing2new,expcombinatorics,expbifurcationlocus,bifurcations_new}.

  \begin{thm}[More escaping endpoints explode]\label{thm:ghostlimbs}
    Let $a\in\C$ be a parameter such that $f_a$ does not have an irrationally indifferent periodic orbit, and such that all repelling periodic points of $f_a$ are 
      landing points of periodic dynamic rays. Then $\infty$ is an explosion point for $\tilde{E}(f_a)$. 
  \end{thm}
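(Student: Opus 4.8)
The plan is to argue, exactly as for Theorem~\ref{thm:escapingendpointsexplode}, that it suffices to show $\tilde{E}(f_a)$ is totally separated, since connectedness of $\tilde{E}(f_a)\cup\{\infty\}$ is already provided by Theorem~\ref{thm:escapingendpoints}. If $a\in F(f_a)$ we are done by Theorem~\ref{thm:escapingendpointsexplode}, so we assume $a\notin F(f_a)$. Since an attracting or parabolic basin would contain the singular value, since $f_a$ has no irrationally indifferent cycle (hence no Siegel disc), and since exponential maps have no wandering or Baker domains, this forces $F(f_a)=\emptyset$; in particular every periodic point of $f_a$ is repelling and, by hypothesis, is the landing point of a periodic dynamic ray. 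Also $a\notin I(f_a)$, since an escaping singular value would lie on a hair or be an endpoint, so $a$ is a non-escaping point of $J(f_a)$ and the construction of Section~\ref{sec:accessible} is unavailable.

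The idea is to replace the curve $\gamma$ through the singular value used in Section~\ref{sec:accessible} by a dynamically defined partition coming from a \emph{periodic ray pair}: two distinct periodic dynamic rays $g_{\addt^{1}},g_{\addt^{2}}$ landing at a common repelling periodic point $p$. Set $\Gamma\defeq f_a^{-1}\bigl(g_{\addt^{1}}\cup\{p\}\cup g_{\addt^{2}}\bigr)$. Because $f_a$ has no critical points, the two rays over $p$ in each fundamental domain land at a common preimage of $p$, so every component of $\Gamma$ is an arc both of whose ends escape to $\infty$; hence $\Gamma$ cuts $\C$ into countably many strips and we may speak of itineraries with respect to $\Gamma$. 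Crucially, $\Gamma$ consists of preperiodic rays together with preperiodic landing points, so it contains no escaping point, in particular no escaping endpoint, and the same is true of its iterated preimages. Thus, by Observation~\ref{obs:itineraryseparation} and Lemma~\ref{lem:planeseparation}, two escaping endpoints with different $\Gamma$-itineraries are separated \emph{within} $\tilde{E}(f_a)$.

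It then remains to check that any two distinct escaping endpoints have different $\Gamma$-itineraries. Given $z\neq w$ in $\tilde{E}(f_a)$: if their orbits eventually agree, then at the previous step they differ by a nonzero multiple of $2\pi i$ and hence lie in different strips of $\C\setminus\Gamma$, exactly as in the proof of Theorem~\ref{thm:escapingendpointsexplode}. Otherwise, passing to a forward iterate and using Theorem~\ref{thm:rays}, we may assume that $z$ and $w$ are landing points of dynamic rays at distinct \emph{fast} addresses $\addR^{z}\neq\addR^{w}$ whose $\Gamma$-itineraries are defined (the finitely many relevant boundary addresses $\sigma^{j}(\addt^{i})$ are periodic, hence bounded, hence cannot occur as tails of a fast address). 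The combinatorics of $\Gamma$ is governed by its kneading sequence, namely the $\Gamma$-itinerary of the singular orbit $\bigl(f_a^{n}(a)\bigr)_{n\ge 0}$; provided this sequence grows at most sub-exponentially, the arguments of Lemma~\ref{lem:sharing} and Proposition~\ref{prop:slowsharingitineraries} go through essentially unchanged, so two addresses sharing a common $\Gamma$-itinerary must be slow. Since $\addR^{z}$ and $\addR^{w}$ are fast, their itineraries differ, and at the first index of disagreement the corresponding $f_a$-iterates of $z$ and $w$ lie in different strips of $\C\setminus\Gamma$, hence are separated. This shows $\tilde{E}(f_a)$ is totally separated.

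The hard part, and the only place where the hypotheses and the deeper combinatorial theory of exponential parameter space \cite{expcombinatorics,expbifurcationlocus,bifurcations_new,landing2new} are needed, is to produce a partition whose kneading sequence is controlled. Since all periodic rays land and there is no irrationally indifferent cycle, $a$ possesses a well-defined combinatorial class; the ghost-limb analysis of \cite{expbifurcationlocus} locates this class — and a \emph{characteristic} periodic ray pair enclosing $a$ — even though $a$ itself is inaccessible. When $a$ is not infinitely renormalizable, the singular orbit visits only finitely many strips of the corresponding $\Gamma$, so its kneading sequence is eventually periodic, in particular bounded, and the previous paragraph applies directly; when $a$ is infinitely renormalizable no single partition suffices, but one works instead with the exhausting sequence of characteristic ray pairs at increasing renormalization levels, each with an eventually periodic kneading sequence, so that any two distinct escaping endpoints are separated by $\Gamma$ at a sufficiently deep level. (In the degenerate case where the combinatorial model of $a$ carries no nontrivial identifications at all, $J(f_a)$ is a Cantor bouquet and $\tilde{E}(f_a)\subset E(f_a)$ is totally separated for trivial reasons.) In every case $\tilde{E}(f_a)$ is totally separated, and together with Theorem~\ref{thm:escapingendpoints} this yields the theorem.
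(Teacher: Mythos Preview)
Your outline heads in the right direction~-- reducing to total separation, invoking the parameter-space combinatorics of \cite{expbifurcationlocus,expcombinatorics}, and using periodic ray pairs in place of a curve through the singular value~-- but the crucial step is not carried out. You assert that ``the arguments of Lemma~\ref{lem:sharing} and Proposition~\ref{prop:slowsharingitineraries} go through essentially unchanged'' for itineraries defined by a periodic ray-pair partition $\Gamma$. They do not, at least not without real work: Lemma~\ref{lem:sharing} is proved for the very specific partition into intervals $(m\s,(m+1)\s)$ determined by a \emph{single} address $\s$, and its conclusion bounds the shared itinerary in terms of $\K(\s)$. For a ray-pair partition the boundary addresses are translates of \emph{two} periodic addresses, the ``kneading sequence'' is the $\Gamma$-itinerary of $a$, and you give no argument linking the itinerary of two addresses that share a $\Gamma$-itinerary to this object. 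Your further claim that in the non-renormalisable case this $\Gamma$-kneading sequence is ``eventually periodic'' is unsupported (boundedness of the singular orbit, which is itself not obvious from $a\notin I(f_a)$, would give only a bounded sequence), and the infinitely renormalisable case is handled by assertion only (``separated by $\Gamma$ at a sufficiently deep level'').

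The paper avoids these difficulties by a sharper decomposition. Rather than redefining itineraries via a ray pair, it associates to $a$ (via the extended fibre of \cite{expbifurcationlocus}) a finite set $S$ of exponentially bounded addresses, and works with the \emph{combinatorial} itinerary $\itin_{\s}$ for $\s\in S$ exactly as in Section~\ref{sec:accessible}. Then Proposition~\ref{prop:slowsharingitineraries} applies \emph{verbatim}, so any two fast addresses have different itineraries with respect to $\s$. The hypotheses enter only to guarantee that $\K(a)$ is aperiodic, so that the internal address is infinite and yields an exhausting sequence of characteristic (pre)periodic ray pairs whose separation lines realise, in the dynamical plane, the combinatorial separation already obtained. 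In short: the paper keeps the combinatorics unchanged and uses ray pairs only for the topological realisation, whereas you change the combinatorial setup and then have to re-prove the key lemma, which you have not done.
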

\begin{proof}[Sketch of  proof]
  If $a\in F(f_a)$, then the result is covered by
    Theorem~\ref{thm:escapingendpointsexplode}. So  we may assume that $a\in J(f_a)$. Likewise, Theorem~\ref{thm:escapingendpointsexplode} also applies
    when $a$ is an escaping point of $f_a$ (as then $a$ is either on a hair or an escaping endpoint),  so we may assume $a\notin I(f_a)$.

  Even without knowing that there is a ray landing at $a$,  we can 
    still associate one or  more external addresses $\s$, and a kneading
    sequence $\K(\s)$, to $a$ as follows.
    In \cite{expbifurcationlocus}, the notion of the \emph{extended fibre} $\breve{Y}$ of such a parameter was defined.
    This set $\breve{Y}$ is a closed subset of parameter space containing a finite (and non-zero) 
    number of \emph{parameter rays}, with associated 
    exponentially bounded external addresses \cite[Lemma~17]{expbifurcationlocus}. 
    All of these addresses share the same kneading sequence,
    which it hence makes sense to refer to as the kneading sequence $\K(a)$ of the parameter $a$. 
   (See \cite[Appendix~A]{expcombinatorics}.) The theory of \emph{internal addresses} allows us to
    translate the kneading sequence into information about the position  of  $a$ in  parameter space, and vice versa.
    In particular, the kneading sequence of a parameter $a$ is  periodic if and only  if  there is a hyperbolic
    component $U$ in  parameter space (i.e., an  open connected region in which all  parameters have an  attracting 
    periodic orbit) such that $a$ cannot be separated from $U$ by a pair of periodic parameter rays 
    landing at a common  point. As discussed in \cite[Theorem~4]{landing2new}, for any such  parameter, there
    is a cycle of periodic points that are not landing points of periodic dynamic rays. Hence the  assumption  that
    all periodic points of  $f_a$ are repelling, and that each of these is the landing point of a periodic ray,
    implies that the kneading sequence $\K(a)$ is aperiodic.

  Now let $S$ be the finite set of exponentially   bounded addresses associated  to $a$ as above; let us assume first for simplicity that $\#S\leq 2$. 
    In ``nice'' cases, we would expect the dynamical ray $g_{\s}$ of $f_a$,
    for $\s\in S$, to land at the singular value $a$, but as discussed above this does not hold in general. However, the construction of fibers ensures that $g_{\s}$ cannot be
    separated from $a$ in a certain sense. More precisely, if two periodic or preperiodic dynamical rays land at a common point, let us call their union together with the landing point
    a ``dynamical separation line''. The singular value $a$ cannot be separated from $g_{\s}$, for $\s\in S$, by such
    a separation line.

\begin{claim}
  Any dynamic ray $g_{\s}$, where $\s$ is exponentially bounded and $\s\notin S$, can be separated from $a$ by a dynamical separation line. Moreover, the addresses 
      of the rays in these separation lines are uniformly exponentially bounded (independently of $\s$).
\end{claim}
\begin{subproof}
Let $S=\{\s^-,\s^+\}$, with $\s^-\leq \s^+$. Then the characteristic addresses $(\addR^{j-},\addR^{j+})_{j\geq 1}$ 
    of the hyperbolic components appearing in the internal address of
    $a$ (see \cite{expcombinatorics}) satisfy 
     \[ \addR^{j-} < \addR^{(j+1)-} < \s^- \leq \s^+ < \addR^{(j+1)+} < \addR^{j+} \]
    for all $j$, with $\addR^{j-}\to \s^-$ and $\addR^{j+}\to \s^+$. (Observe that we use the fact that the kneading sequence is aperiodic, and hence the 
     internal address is infinite.) Furthermore, the rays at addresses $\addR^{j-}$ and $\addR^{j+}$ have a common periodic landing point, and hence the corresponding separation
     line separates $a$ from $g_{\s}$ for all $\s\notin [s^-,\s^+]$. Moreover, by the internal address algorithm, the kneading sequences of the 
     addresses $\addR^{j\pm}$ are all exponentially bounded with the same bound as $\K(a)$, and hence the addresses involved are uniformly exponentially bounded. 
     In particular, if $\s^-=\s^+$, then the claim is proved.

  If $\s^-<\s^+$, so that $\# S = 2$, we claim that there is also a sequence of preperiodic separation lines with associated external addresses
      $\{\tilde{\addR}^{j-},\tilde{\addR}^{j+}\} \subset (\s^-,\s^+)$, converging to $\{\s^-,\s^+\}$ from the inside. It is possible to deduce this using the methods of
     \cite{expcombinatorics}. Instead, we notice that, in this case, the addresses in $S$ must be bounded \cite[Theorem~A.3]{expcombinatorics}, and hence the claim 
     reduces to the corresponding statement for unicritical polynomials, using the same reduction as in the proof of \cite[Theorem~A.3]{expcombinatorics}. But in this setting, the claim
      is well-known: For a unicritical polynomial without nonrepelling cycles, the minor leaf of the corresponding lamination is approximated on from both sides by periodic or pre-periodic leaves.
\end{subproof}

  Observe that a separation line does not contain any escaping endpoints. Now suppose that $\addt^1$ and 
      $\addt^2$ are exponentially bounded  addresses whose itineraries (with respect to $s\in  S$) are defined and
      do not coincide. 
      Then it follows that  $g_{\addt^1}$ and $g_{\addt^2}$ 
      can be separated by a separation line. Indeed, suppose first that the initial itinerary entries differ,  and consider
      a separation line $\gamma$ that, separates both $g_{\sigma(\addt^1)}$ 
       from $a$. Then the component of $f_a^{-1}(\C\setminus \gamma)$ containing $g_{\addt^1}$ can contain
      only rays whose initial itinerary entry agrees with that of $\addt^1$. This implies that $\addt^1$ and $\addt^2$
      are indeed separated.  The case where the itineraries of $\addt^1$ and $\addt^2$ first differ in  the
       $n$-th entry,  for $n>0$, follows inductively by taking pullbacks. 
      Given  that any two endpoints have different itineraries by 
      Proposition~\ref{prop:slowsharingitineraries}, it follows again 
      that the set of escaping endpoints is
      totally separated.

If $\#S \geq 3$, the proof proceeds analogously, using separation lines lying in each of the complementary intervals of $S$. Alternatively, note that in this case
    the addresses in $S$ are eventually periodic \cite[Theorem A.3]{expcombinatorics}, and it follows from \cite{beninimisiurewiczfibers} that the singular value $a$ is
    preperiodic. In this case, it is known that $a$ is an endpoint \cite[Theorem~4.3]{expper}, and hence the result follows from Theorem~\ref{thm:escapingendpointsexplode}.
\end{proof}

  Conjecturally, every repelling periodic point of an exponential map with non-escaping singular value is the landing point of a periodic dynamic ray. 
    Hence Theorem~\ref{thm:ghostlimbs} provides strong evidence that 
    Conjecture~\ref{conj:endpoints} is valid at least for all exponential maps without Cremer points or Siegel discs. 

  Also, if $f_a$ has a Siegel disc whose boundary contains the singular value $a$, then conjecturally the singular value $a$ is an endpoint. If this is the
   case, then the claim in Conjecture~\ref{conj:endpoints} holds for such parameters by Theorem~\ref{thm:escapingendpointsexplode}. 
   However, when 
    $f_a$ has a Cremer point (or, indeed, a Siegel disc whose boundary does not contain the singular value), then in view of the results discussed above it is likely 
    that $a$ is \emph{not} an endpoint. 

   More recent results on quadratic polynomials with Cremer fixed points (and Siegel discs) suggest that Conjecture~\ref{conj:endpoints} is true in these
    cases also. Indeed, for a large class of such maps, Shishikura has announced the existence of an arc in the Julia set connecting the fixed point and
    the critical point, and although the general case is out of reach, this is now widely believed to be true in general. 
    For exponential maps with an irrationally indifferent orbit, it seems likely that there is similarly
     an arc of non-escaping points connecting the singular value to this orbit. We could then join this arc with one of its preimages to obtain a curve connecting the singular value to $\infty$, and apply the
   results of Section~\ref{sec:accessible}. 

 Finally, let us briefly remark that our results for the set of escaping endpoints (and all endpoints) leave open similar questions for the case of 
   \emph{non-escaping} endpoints, even for the case of $e^z-a$ with $a>1$. Evdoridou \cite{vassoweb} has recently announced a result
   that implies that the set of non-escaping points of the \emph{Fatou function} $z\mapsto z+1+e^{-z}$ (whose Julia set is a Cantor bouquet), together with
    $\infty$, is totally disconnected. 
    The proof can be adopted to show also that the set of non-escaping endpoints~-- and even the set of all endpoints not belonging to the 
    fast escaping set $A(f)$~-- of an exponential map with Cantor bouquet Julia set has the
    same property. This shows that our results are optimal in a certain sense.

\bibliographystyle{amsalpha}
\bibliography{biblio_explosion}

\end{document}